\documentclass[onecolumn,12pt,draftcls]{IEEEtran}
\IEEEoverridecommandlockouts

\usepackage{amsmath}
\usepackage{amssymb}
\usepackage{amsthm}
\usepackage{dsfont}
\usepackage{enumitem}
\usepackage{bm}
\usepackage{hyperref}

% \addbibresource{ref.bib}

\usepackage{graphicx}
\usepackage[dvipsnames]{xcolor}

% User defined commands
\newcommand{\ind}[1]{\mathds{1}\{#1\}}
\newcommand{\E}[2]{\mathbb{E} _{ #1 }  \left[ #2 \right]}
\newcommand{\Var}[2]{\mathrm{Var} _{#1} \left( #2 \right)}

\newcommand{\Prob}[2]{\mathbb{P} _{ #1 } \left\{ #2 \right\}}
\newcommand{\Ep}[1]{\mathrm{E}_{p}  \left[ #1 \right]}

\newcommand{\floor}[1]{\left\lfloor #1 \right\rfloor}
\newcommand{\norm}[1]{\left\lVert #1 \right\rVert}
\newcommand{\abs}[1]{\left| #1 \right|}
\newcommand{\R}{\mathbb{R}}

\newcommand{\td}{\Tilde}
\newcommand{\KL}[2]{D(#1||#2)}

\newcommand{\brc}[1]{\left( #1 \right)}
\newcommand{\sbrc}[1]{\left[ #1 \right]}
\newcommand{\cbrc}[1]{\left\{ #1 \right\}}
\newcommand{\eps}{\epsilon}
\newcommand{\calN}{\mathcal{N}}

\newcommand{\MISE}{\mathrm{MISE}}
\newcommand{\KLloss}{\text{KL-loss}}

\newcommand{\FAR}[1]{\mathrm{FAR} \left( #1 \right)}
\newcommand{\WADD}[1]{\mathrm{WADD} \left( #1 \right)}
\newcommand{\supp}{\mathrm{supp}}

\newcommand{\WADDth}[1]{\mathrm{WADD} ^{ \theta } \left( #1 \right)}

\DeclareMathOperator*{\esssup}{ess\,sup}

\newtheorem{theorem}{Theorem}[section]
\newtheorem{corollary}{Corollary}[theorem]
\newtheorem{lemma}[theorem]{Lemma}

\theoremstyle{definition}

\newtheorem{example}{Example}[section]

\theoremstyle{remark}
\newtheorem*{remark}{Remark}

\begin{document}

\title{Quickest Change Detection with Post-Change Density Estimation}

\author{Yuchen Liang, ~\IEEEmembership{Member,~IEEE}\thanks{Y. Liang was with the ECE Department and Coordinated Science Laboratory, University of Illinois at Urbana-Champaign; he is now with 
the ECE Department, The Ohio State University; email: liang.1439@osu.edu.}
%George V. Moustakides, ~\IEEEmembership{Life Senior Member,~IEEE},
%\thanks{George V. Moustakides is with the Electrical and Computer Engineering Department, University of Patras, 26500 Patras, Greece; e-mail: moustaki@upatras.gr} 
and  Venugopal V. Veeravalli, ~\IEEEmembership{Fellow, IEEE}\thanks{V.~V.~Veeravalli is with the ECE Department and Coordinated Science Laboratory, 
University of Illinois at Urbana-Champaign; email: vvv@illinois.edu.}

\thanks{This work was supported in part by the US National Science Foundation under grant ECCS-2033900.}
%, and by the Army Research Laboratory under Cooperative Agreement W911NF-17-2-0196, through the University of Illinois at Urbana-Champaign.}

\thanks{
This work is based in part on the paper presented at the 2023 IEEE International Conference on Acoustics, Speech and Signal Processing~(ICASSP)~\cite{liang2023icassp} }
}

\maketitle

\vspace*{-0.5in}

\begin{abstract}
The problem of quickest change detection in a sequence of independent observations is considered. The pre-change distribution is assumed to be known, while the post-change distribution is unknown. Two tests based on post-change density estimation are developed for this problem, the window-limited non-parametric generalized likelihood ratio (NGLR) CuSum test and the non-parametric window-limited adaptive (NWLA) CuSum test. Both tests do not assume any knowledge of the post-change distribution, except that the post-change density satisfies certain smoothness conditions that allows for efficient non-parametric estimation.
Also, they do not require any pre-collected post-change training samples. Under certain convergence conditions on the density estimator, it is shown that both tests are first-order asymptotically optimal, as the false alarm rate goes to zero.  
The analysis is validated through numerical results, where both tests are compared with baseline tests that have distributional knowledge.
\end{abstract}

\begin{IEEEkeywords}
Quickest change detection (QCD), non-parametric statistics, (kernel) density estimation, sequential methods.
\end{IEEEkeywords}

\section{Introduction}

The problem of quickest change detection (QCD) is of fundamental importance in mathematical statistics (see, e.g., \cite{vvv_qcd_overview,xie_vvv_qcd_overview} for an overview). Given a sequence of observations whose distribution changes at some unknown change-point, the goal is to detect the change in distribution as quickly as possible after it occurs, while controlling the false alarm rate. 
In classical formulations of the QCD problem, it is assumed that the pre- and post-change distributions are known, and that the observations are independent and identically distributed (i.i.d.) in the pre- and post-change regimes. However, in many practical situations, while it is reasonable to assume that we can accurately estimate the pre-change distribution, the post-change distribution is rarely completely known.

There have been extensive efforts to address pre- and/or post-change distributional uncertainty in QCD problems. In the case where both distributions are not fully known, one approach is to assume that the distributions are parametrized by a (low-dimensional) parameter that comes from a pre-defined parameter set, and to employ a generalized likelihood ratio (GLR) approach for detection. This approach was first introduced in \cite{lorden1971} and later analyzed in more detail in \cite{lai1998infobd}.
In particular, in \cite{lai1998infobd}, it is assumed that the pre-change distribution is known and that the post-change distribution comes from a parametric family, with the parameter being finite-dimensional. A window-limited GLR test is proposed, which is shown to be asymptotically optimal under certain smoothness conditions.
This work has recently been extended to non-stationary post-change settings \cite{liang2022nonstat}. For the setting considered in \cite{lai1998infobd}, a window-limited adaptive approach to constructing a QCD test  was developed in recent work \cite{xie2022windowlimited}. This adaptive test is also shown to achieve first-order asymptotic optimality \cite{xie2022windowlimited}. In this paper, one of the test constructions for the case where the post-change is completely unknown is based on extending techniques introduced in \cite{xie2022windowlimited}.

%Another approach to dealing with distributional uncertainty in QCD problems is the minimax robust approach \cite{huber1965}, where it is assumed that the pre- and post-change distributions come from (known) mutually exclusive uncertainty classes, and the goal is to optimize the performance for the worst-case choice of distributions in the uncertainty classes. Under certain conditions, e.g., joint stochastic boundedness (see, e.g., \cite{moulin-veeravalli-2018} for a definition) and weak stochastic boundedness \cite{molloy2017asymrobustqcd}, robust solutions can be found \cite{unnikrishnan2011robustqcd, molloy2017asymrobustqcd}. However, these robust tests can have suboptimal performance for the actual distributions encountered in practice.

We assume complete knowledge of the pre-change distribution, while not making any parametric assumptions about the post-change distribution. 
There has been prior work along these lines. One approach is to replace the log-likelihood ratio by some other useful statistic for distinguishing between distributions in constructing tests.  Examples of this approach include the use of kernel M-statistics \cite{xie2015mstat, flynn2019kernelcusum}, one-class SVMs \cite{desobry2005onesvm}, nearest neighbors \cite{chu2022sequential, hao2019knn}, and Geometric Entropy Minimization \cite{yilmaz2017gem, kurt2020gem}. In \cite{xie2015mstat}, a test is proposed that compares the kernel maximum mean discrepancy (MMD) within a window to a given threshold. A way to set the threshold is also proposed that meets the false alarm rate asymptotically \cite{xie2015mstat}. Another approach is to estimate the log-likelihood ratio and thus the CuSum test statistic through a pre-collected training dataset. This include direct kernel estimation \cite{sugiyama2012direct} and, more recently, neural network estimation \cite{moustakides2019training}. 
%In \cite{sugiyama}, a test is proposed where its statistic comes from the direct kernel-based density-ratio estimation. 
However, \textit{the tests proposed in \cite{xie2015mstat}--\cite{moustakides2019training} lack explicit performance guarantees on the detection delay.}
%In \cite{lau2018binning}, a binning approach is proposed to solve the QCD problem asymptotically without any pre-collected training set. The asymptotic optimality is established for the case where the pre-change distribution is known, the post-change distribution is distinguishable from the pre-change with binning, and both distributions have discrete support.

Our contributions are as follows:
\begin{enumerate}
    \item We propose a window-limited non-parametric generalized likelihood ratio (NGLR) CuSum test and a non-parametric window-limited adaptive (NWLA) CuSum test, both of which do not assume any knowledge of the post-change distribution (except that the post-change density satisfies certain smoothness conditions that allows for efficient non-parametric estimation), and do not require any post-change training data.
    \item We characterize a generic class of density estimators that enable detection. 
    \item For both tests, we provide a way to set the test threshold to meet false alarm constraints (asymptotically).
    \item We show that both proposed tests are first-order asymptotically optimal with the selected thresholds, as the false alarm rate goes to zero.
 %   \item We characterize the convergence rate in the delay of the WLA CuSum test as a function of the false alarm rate.
    \item We validate our analysis through numerical results, in which we compare both tests with baseline tests that have distributional knowledge.
\end{enumerate}

The rest of the paper is structured as follows. In Section~\ref{sec:de_property}, we describe some properties required of the density estimators for asymptotically optimal QCD. In Section~\ref{sec:qcd_loo}, we propose the NGLR-CuSum test and analyze its theoretical performance. In Section~\ref{sec:qcd_adp}, we study the performance of the NWLA-CuSum test. Both tests are analyzed under the assumption that the post-change distribution is completely unknown.
In Section~\ref{sec:num_res}, we present numerical results that validate the theoretical analysis.
In Section~\ref{sec:concl}, we provide some concluding remarks. 

A preliminary version of the results in this paper for the NGLR-CuSum test appeared in \cite{liang2023icassp}.

\section{Density Estimators for Quickest Detection}
\label{sec:de_property}

Let $X_1,X_2,\dots \in \R^d$ be i.i.d. observations drawn from an unknown distribution, with probability density function (or \emph{density}) $p$ with respect to some dominating measure $\mu$, and let $\supp(p)$ be the support of $p$. Let $\mathrm{E}_{p}$ and $\mathrm{V}_{p}$ denote, respectively, the expectation and variance operator on the sequence of observations, when the density corresponding to each observation is $p$. For two densities $p$ and $q$ on $\R^d$ with respect to $\mu$, the Kullback-Leibler (KL) divergence is defined as:
\[ 
\KL{p}{q} := \int_{\supp(p)} \log (p(x)/q(x)) p(x) d \mu(x). 
\]
Define $X^{[k,n]} := X_k,\dots,X_n$. Let $\widehat{p}^{n,k}_{-i}$ be a density on $\R^d$ with respect to $\mu$ that is estimated using $X^{[k,n]}_{-i} := X_k,\dots, X_{i-1},X_{i+1},\dots,X_n$, where the subscript $-i$ represents that $X_i$, with $k \leq i \leq n$, is the observation that is left out from $X^{[k,n]}$.  We refer to $\widehat{p}^{n,k}_{-i}$ as a leave-one-out (LOO) estimator. Note that $\widehat{p}^{n,k}_{-i}$ and $X_i$ are independent for each $1 \leq k \leq i \leq n$. 
% Also, \vvv{I don't think the following equation is correct. The expectation is the integral of the product $\widehat{p}^{n,k}_{-i}(x) p(x)$, which is not equal to 1. But I don't think you need it to equal 1 anyway, and so you can skip this equation.}
% \[ 
% \Ep{\widehat{p}^{n,k}_{-i}(X_i)} = 
% \Ep{\Ep{\widehat{p}^{n,k}_{-i}(X_i) | X^{[k,n]}_{-i}}}= 
% 1. \]
% The estimation procedure is assumed to be sample-homogeneous, i.e., $\widehat{p}^{n,k}_{-i} \stackrel{d.}{=} \widehat{p}^{n,k}_{-j}, \forall k \leq i < j \leq n$. 

With some possible abuse of notation, we also define 
\[
\widehat{p}^{w}_n := \widehat{p}^{n,n-w}_{-n}
\]
to be the LOO estimate of $p$ obtained from the past $w$ i.i.d. samples from $p$.
% In this section, we write $\widehat{p}^{w} = \widehat{p}^{w}_n$ for brevity.

Suppose that, for large enough $w$, there exist constants $0 < \beta_1,C_1,C_2 < \infty$ and $0 < \beta_2 < 2$ (that depend only on the density $p$ and the estimation procedure) such that the KL loss \cite{hall1987klloss} of the density estimator satisfies
\begin{equation}
\label{eq:converg_m1_bound}
    \text{KL-loss} (\widehat{p}^{w}_n) := \Ep{\KL{p}{\widehat{p}^{w}_n}} \leq \frac{C_1}{w^{\beta_1}}
\end{equation}
where the KL divergence and the expectation operator $\mathrm{E}_p$ are taken over the randomness of $X_n$ and $\widehat{p}^{w}_n$, respectively.
% In the context of change detection, if $p = p_0$, equation~\eqref{eq:converg_m1_bound} is equivalent to
% \begin{equation}
%     \E{\infty}{\widehat{Z}_n} \leq - \frac{c_1}{w^{\overline{\beta}_1}}.
% \end{equation}
% When $p = p_1$, equation~\eqref{eq:converg_m1_upper_bound} is equivalent to
% \begin{equation}
%     \widehat{I} \geq I - \frac{C_1}{w^{\underline{\beta}_1}}.
% \end{equation}
% This guarantees that $\widehat{I} > 0$ for some sufficiently large $w$.
Also, the second moment satisfies
\begin{equation}
\label{eq:converg_m2_bound}
    \Ep{\brc{\log \frac{p(X_n)}{\widehat{p}^{w}_n(X_n)}}^2} \leq \frac{C_2}{w^{\beta_2}}.
\end{equation}
Here the expectation operator $\mathrm{E}_p$ is taken over the randomness of both $X_n$ and $\widehat{p}^{w}_n$. Recall that $X_n$ is independent of $\widehat{p}^{w}_n$.
% Finally, it is assumed that $\widehat{p}^{w}_n$ integrates to 1, i.e.,
% \[ \Ep{\widehat{p}^{w}_n(X)} = 1 \]
% where $X$ is similarly independent of $\widehat{p}^{w}_n$.

Similar assumptions to \eqref{eq:converg_m1_bound} and \eqref{eq:converg_m2_bound} are imposed for general $\widehat{p}^{n,k}_{-i}$ as follows. When $n-k$ is large enough, for each $k \leq i \leq n$, 
\begin{equation}
\label{eq:converg_m1_bound_loo}
    \KLloss(\widehat{p}^{n,k}_{-i}) = \Ep{\KL{p}{\widehat{p}^{n,k}_{-i}}} \leq \frac{C_1}{(n-k)^{\beta_1}}
\end{equation}
and
\begin{equation}
\label{eq:converg_m2_bound_loo}
    \Ep{\brc{\frac{1}{(n-k+1)}\sum_{i=k}^{n} \log \frac{p(X_i)}{\widehat{p}^{n,k}_{-i}(X_i)}}^2} \leq \frac{C_2}{(n-k+1)^{\beta_2}}.
\end{equation}

A typical loss measure for a density estimator is the mean-integrated squared error (MISE), defined as (see, e.g., \cite[Chap.~2]{scott2015mult-denst-est})
\begin{align}
\label{eq:mise_def}
   \MISE(p, \widehat{p}^{w}_n) &= \Ep {\int (\widehat{p}^{w}_n (x_n) - p(x_n))^2 d \mu(x_n)} 
   = \Ep {\norm{\widehat{p}^{w}_n - p}_2^2}.
\end{align}
The following lemma connects the MISE measure with the bounds in \eqref{eq:converg_m1_bound}--\eqref{eq:converg_m2_bound_loo}. The proof is given in the Appendix.
\begin{lemma}
\label{lem:Dest}
Suppose that there exist $\overline{\zeta}, \underline{\zeta}$ such that 
\begin{equation} \label{eq:compact_support}
    0 < \underline{\zeta} \leq p(x), \widehat{p}^{w}_n(x) \leq \overline{\zeta} < \infty,~\forall x \in \supp(p).
\end{equation}
If the estimator achieves
\begin{equation} \label{eq:mise_est_cond}
    \MISE(p, \widehat{p}^{w}_n) \leq \frac{C_3}{w^{\beta_3}}, 
\end{equation}
for all $w$ large enough and for some constants $0 < \beta_3, C_3 < \infty$, then
\eqref{eq:converg_m1_bound}-\eqref{eq:converg_m2_bound_loo} 
are satisfied with
\[ C_1 = \frac{C_3}{\underline{\zeta}},\quad C_2 = \frac{\overline{\zeta} r C_3}{\underline{\zeta}^2}, \quad \beta_1 = \beta_2 = \beta_3 \]
where 
\begin{equation}\label{eq:rdef}
r := \brc{\frac{\log (\underline{\zeta} / \overline{\zeta})}{(\underline{\zeta} / \overline{\zeta})-1}}^2.
\end{equation}
\end{lemma}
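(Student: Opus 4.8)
The plan is to obtain all four bounds \eqref{eq:converg_m1_bound}--\eqref{eq:converg_m2_bound_loo} from the MISE hypothesis \eqref{eq:mise_est_cond} via two elementary pointwise inequalities for $\log t$ on the compact interval to which \eqref{eq:compact_support} confines the density ratio $t=p(x)/\widehat{p}^{w}_n(x)$, namely $[\underline{\zeta}/\overline{\zeta},\,\overline{\zeta}/\underline{\zeta}]$, and then integrating against $p$ and averaging over the estimator. Since $\widehat{p}^{w}_n$ is a density and \eqref{eq:compact_support} bounds it below by $\underline{\zeta}>0$ on $\supp(p)$, it follows that $\supp(\widehat{p}^{w}_n)=\supp(p)$; hence all integrals below run over $\supp(p)$, where both $p$ and $\widehat{p}^{w}_n$ lie in $[\underline{\zeta},\overline{\zeta}]$, and $\int(p-\widehat{p}^{w}_n)\,d\mu=0$. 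The first inequality is $\log t\le t-1$; the second is $(\log t)^{2}\le r\,(t-1)^{2}$ on $[\underline{\zeta}/\overline{\zeta},\,\overline{\zeta}/\underline{\zeta}]$ with $r$ as in \eqref{eq:rdef}, which holds because $g(t):=\log t/(t-1)$ (with $g(1)=1$) is positive and strictly decreasing on $(0,\infty)$, so $g(t)^2$ is maximized over that interval at its left endpoint $t=\underline{\zeta}/\overline{\zeta}$.

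For \eqref{eq:converg_m1_bound}, the first inequality with $t=p/\widehat{p}^{w}_n$, together with $\int(p-\widehat{p}^{w}_n)\,d\mu=0$, gives the standard chi-squared bound $\KL{p}{\widehat{p}^{w}_n}\le\int\frac{(p-\widehat{p}^{w}_n)^{2}}{\widehat{p}^{w}_n}\,d\mu\le\frac{1}{\underline{\zeta}}\norm{p-\widehat{p}^{w}_n}_2^{2}$, the last step using $\widehat{p}^{w}_n\ge\underline{\zeta}$. Taking $\Ep{\cdot}$ and invoking \eqref{eq:mise_est_cond} yields \eqref{eq:converg_m1_bound} with $C_1=C_3/\underline{\zeta}$ and $\beta_1=\beta_3$. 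For \eqref{eq:converg_m2_bound}, recall $X_n$ is independent of $\widehat{p}^{w}_n$; conditioning on $\widehat{p}^{w}_n$ reduces the left side to $\int p\,\brc{\log(p/\widehat{p}^{w}_n)}^{2}\,d\mu$, and the second inequality with $t=p/\widehat{p}^{w}_n$, followed by $\widehat{p}^{w}_n\ge\underline{\zeta}$ and $p\le\overline{\zeta}$, bounds this by $\frac{r}{\underline{\zeta}^{2}}\int p\,(p-\widehat{p}^{w}_n)^{2}\,d\mu\le\frac{r\,\overline{\zeta}}{\underline{\zeta}^{2}}\norm{p-\widehat{p}^{w}_n}_2^{2}$. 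Averaging over $\widehat{p}^{w}_n$ and using \eqref{eq:mise_est_cond} gives \eqref{eq:converg_m2_bound} with $C_2=r\,\overline{\zeta}\,C_3/\underline{\zeta}^{2}$ and $\beta_2=\beta_3$.

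For the leave-one-out versions, each $\widehat{p}^{n,k}_{-i}$ is the same estimation procedure applied to $n-k$ i.i.d.\ draws from $p$ and is independent of $X_i$, so \eqref{eq:mise_est_cond} applies with $w=n-k$ and the argument for \eqref{eq:converg_m1_bound} reproduces \eqref{eq:converg_m1_bound_loo} verbatim. For \eqref{eq:converg_m2_bound_loo}, applying Jensen's inequality to the convex map $x\mapsto x^{2}$ moves the square inside the average, $\brc{\frac{1}{n-k+1}\sum_{i=k}^{n}a_i}^{2}\le\frac{1}{n-k+1}\sum_{i=k}^{n}a_i^{2}$ with $a_i=\log\frac{p(X_i)}{\widehat{p}^{n,k}_{-i}(X_i)}$, and each $\Ep{a_i^{2}}$ is bounded exactly as in the previous paragraph.

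I do not anticipate a genuine obstacle: the mathematical content is just the chi-squared bound on the KL divergence and the monotonicity of $\log t/(t-1)$ that produces the constant $r$. The fussy points are (i) the tacit identification $\supp(\widehat{p}^{w}_n)=\supp(p)$ that makes \eqref{eq:compact_support} usable, together with the understanding that \eqref{eq:mise_est_cond} is a property of the estimation procedure applied to any $w$ i.i.d.\ samples from $p$; (ii) the bookkeeping in \eqref{eq:converg_m2_bound_loo} between the sample count $n-k$ produced by the argument and the normalizing factor $n-k+1$ appearing there, which agree up to a $1+o(1)$ factor as $n-k\to\infty$; and (iii) noting that $\beta_3<2$ is required for the resulting $\beta_2=\beta_3$ to honor the standing constraint $0<\beta_2<2$.
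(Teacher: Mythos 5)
Your proposal is correct and takes essentially the same route as the paper's proof: $\log t \le t-1$ together with the fact that $p$ and $\widehat{p}^{w}_n$ integrate to one gives the chi-squared-type bound for the first moment, $(\log t)^2 \le r\,(t-1)^2$ on the range of the density ratio gives the second moment, and independence plus Jensen handles the leave-one-out bounds \eqref{eq:converg_m1_bound_loo}--\eqref{eq:converg_m2_bound_loo}; your monotonicity justification of $r$ and the $(n-k)$ versus $(n-k+1)$ bookkeeping remark are, if anything, slightly more explicit than the paper. The one claim to retract is that \eqref{eq:compact_support} \emph{implies} $\supp(\widehat{p}^{w}_n)=\supp(p)$: a density can exceed $\underline{\zeta}$ on $\supp(p)$ and still carry mass outside it, so the identity $\int_{\supp(p)}(\widehat{p}^{w}_n-p)\,d\mu=0$ that both you and the paper's step $(*)$ rely on is really a tacit assumption that the estimator is supported on $\supp(p)$, not a consequence of the stated hypotheses.
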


% \begin{remark}
% Under the assumption of \eqref{eq:compact_support}, the maximum likelihood estimator (MLE) in the \textit{parametric} density estimation problem (if it exists) achieves $\beta_1 = \beta_2 = \frac{1}{2}$ (e.g., see \cite{moulin-veeravalli-2018}).
% \end{remark}
In the following, for any positive functions $g(w),h(w)$, the notation $h(w) = O(g(w))$ means that $\frac{h(w)}{g(w)} \xrightarrow{w \to \infty} L < \infty$, and $h(w) = \Omega(g(w))$ means that $\frac{h(w)}{g(w)} \xrightarrow{w \to \infty} L > 0$.

\begin{corollary} \label{cor:lem_Dest}
Suppose that \eqref{eq:compact_support} is satisfied with 
% \vvv{I don't like the notation $\overline{\beta}$ and $\underline{\beta}$. Could you change these to $\underline{\beta}$ and $\overline{\beta}$?}
\[ \overline{\zeta} = \overline{\zeta}_w = O(w^{\overline{\beta}}),\quad \underline{\zeta} = \underline{\zeta}_w = \Omega(w^{-\underline{\beta}})\]
such that
\[ \underline{\beta} < \beta_3 / 2, \quad \overline{\beta} < \beta_3 - 2 \underline{\beta}. \]
 Suppose that the estimator still achieves \eqref{eq:mise_est_cond}. Then, \eqref{eq:converg_m1_bound}--\eqref{eq:converg_m2_bound_loo} are still satisfied, with
\[ \beta_1 = \beta_3 - \underline{\beta},\quad \beta_2 = \beta_3 - 2 \underline{\beta} - \overline{\beta} - \varrho \]
where $\varrho > 0$ is a small constant such that $\beta_2$ is still positive.
\end{corollary}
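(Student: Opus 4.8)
The plan is to apply Lemma~\ref{lem:Dest} with the envelopes replaced by their $w$-dependent counterparts $\underline{\zeta}_w,\overline{\zeta}_w$ (and the corresponding $r_w$ from \eqref{eq:rdef}), and then to substitute the prescribed growth rates and track exponents. First I would observe that the proof of Lemma~\ref{lem:Dest} establishes \eqref{eq:converg_m1_bound}--\eqref{eq:converg_m2_bound_loo} \emph{pointwise in $w$} (resp.\ in $n-k$): its constants $C_1,C_2$ arise from the estimates
\[
\KL{p}{\widehat{p}^{w}_n} \leq \frac{1}{\underline{\zeta}_w}\norm{p - \widehat{p}^{w}_n}_2^2
\qquad\text{and}\qquad
\brc{\log \frac{p(x)}{\widehat{p}^{w}_n(x)}}^2 \leq \frac{r_w}{\underline{\zeta}_w^2}\brc{p(x) - \widehat{p}^{w}_n(x)}^2 ,
\]
valid for $x\in\supp(p)$, followed by integrating the second one against $p(x)\leq\overline{\zeta}_w$, taking $\Ep{\cdot}$, and invoking \eqref{eq:mise_est_cond}; none of this uses that the envelopes are constant in $w$, so the argument carries over verbatim with $\underline{\zeta}\mapsto\underline{\zeta}_w$, $\overline{\zeta}\mapsto\overline{\zeta}_w$, $r\mapsto r_w$ (and, for \eqref{eq:converg_m1_bound_loo}--\eqref{eq:converg_m2_bound_loo}, under the analogous leave-one-out hypotheses, exactly as in Lemma~\ref{lem:Dest}).

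For the first-moment bounds \eqref{eq:converg_m1_bound} and \eqref{eq:converg_m1_bound_loo}, this gives $\KLloss(\widehat{p}^{w}_n) \leq C_3/(\underline{\zeta}_w w^{\beta_3})$; since $\underline{\zeta}_w = \Omega(w^{-\underline{\beta}})$ this is $O(w^{-(\beta_3 - \underline{\beta})})$, so \eqref{eq:converg_m1_bound} holds with $\beta_1 = \beta_3 - \underline{\beta}$, which is positive because $\underline{\beta} < \beta_3/2$. The leave-one-out bound \eqref{eq:converg_m1_bound_loo} is obtained identically with $w$ replaced by $n-k$.

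For the second-moment bounds \eqref{eq:converg_m2_bound} and \eqref{eq:converg_m2_bound_loo}, the same step produces a bound of order $\overline{\zeta}_w r_w/(\underline{\zeta}_w^2 w^{\beta_3})$, and the only point requiring care is the factor $r_w$. Because $\overline{\zeta}_w/\underline{\zeta}_w = O(w^{\overline{\beta}+\underline{\beta}})$ grows at most polynomially, $\log(\overline{\zeta}_w/\underline{\zeta}_w) = O(\log w)$, and since $\underline{\zeta}_w/\overline{\zeta}_w \in (0,1]$ the remaining factor in $r_w$ is $O(1)$ (or, if $\underline{\zeta}_w/\overline{\zeta}_w\to1$, directly $r_w\to1$); hence $r_w = O\!\brc{(\log w)^2} = O(w^{\varrho})$ for every $\varrho>0$. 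Combining this with $\overline{\zeta}_w = O(w^{\overline{\beta}})$ and $\underline{\zeta}_w^{-2} = O(w^{2\underline{\beta}})$, the bound is $O(w^{-(\beta_3 - 2\underline{\beta} - \overline{\beta} - \varrho)})$, so \eqref{eq:converg_m2_bound} holds with $\beta_2 = \beta_3 - 2\underline{\beta} - \overline{\beta} - \varrho$; the hypothesis $\overline{\beta} < \beta_3 - 2\underline{\beta}$ makes $\beta_3 - 2\underline{\beta} - \overline{\beta}>0$, so $\varrho$ can be taken small enough to keep $\beta_2$ in $(0,2)$ (a faster polynomial decay trivially implying a slower one, should $\beta_2$ otherwise exceed $2$). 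As before, \eqref{eq:converg_m2_bound_loo} follows by the identical computation with $w$ replaced by $n-k+1$.

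The only real obstacle is this bookkeeping around $r_w$: one has to confirm that the ``condition number'' $\overline{\zeta}_w/\underline{\zeta}_w$ inflates the MISE-based estimates only poly-logarithmically in $w$, which is precisely what the polynomial envelopes $\overline{\zeta}_w = O(w^{\overline{\beta}})$ and $\underline{\zeta}_w = \Omega(w^{-\underline{\beta}})$ guarantee. This logarithmic slack is exactly why $\beta_2$ must absorb an arbitrarily small loss $\varrho$ while $\beta_1$ does not.
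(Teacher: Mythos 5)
Your proposal is correct and follows essentially the same route as the paper: rerun the two moment bounds from the proof of Lemma~\ref{lem:Dest} with the $w$-dependent envelopes $\underline{\zeta}_w,\overline{\zeta}_w$, use $1/\underline{\zeta}_w = O(w^{\underline{\beta}})$ to get $\beta_1=\beta_3-\underline{\beta}$, and bound $r_w = O((\log w)^2)$ so that the polylogarithmic factor is absorbed into $w^{\varrho}$, giving $\beta_2=\beta_3-2\underline{\beta}-\overline{\beta}-\varrho$, with the leave-one-out bounds following verbatim with $w$ replaced by $n-k$ (resp.\ $n-k+1$). Your handling of the $r_w$ factor (including the case $\underline{\zeta}_w/\overline{\zeta}_w\to 1$) is, if anything, slightly more careful than the paper's one-line bound.
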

The proof of this corollary is given in the Appendix.

An example of a density estimator that satisfies \eqref{eq:converg_m1_bound}--\eqref{eq:converg_m2_bound_loo}  (under condition \eqref{eq:compact_support} and when the density satisfies some smoothness condition) is the \emph{kernel} density estimator (KDE).
\begin{example}[Kernel Density Estimator (KDE)] \label{ex:kde}
Suppose $d = 1$ and the dominating measure $\mu$ is the Lebesgue measure on $\R$. Given observations $X_1,\dots,X_w$, the kernel density estimator (KDE) is defined as
\begin{equation}
\label{def:kde}
    \widehat{p}^w_n(x_n) = \frac{1}{w h} \sum_{j=n-w}^{n-1} K\left(\frac{x_n-X_j}{h}\right) % =: \frac{1}{w} \sum_{j=1}^w K_h\left(x-X_j\right)
\end{equation}
where $K(\cdot) \geq 0$ is a kernel function and $h > 0$ is a smoothing parameter.
% It can be shown that KDE is rate optimal for the $\gamma$-H\"older class of densities. 
Define the $\gamma$-H\"older density class as
\[ {\cal H}_\gamma := \left\{ p: \int p(x) d x = 1, \exists L > 0, \abs{p^{(\ell)}(x_1) - p^{(\ell)}(x_2)} \leq L \abs{x_1 - x_2}^{\gamma - \ell},~\forall x_1,x_2 \in \supp(p) \right\}. \]
Here $\gamma > 0$ and $\ell = \floor{\gamma}$. Further, if the kernel function $K(\cdot)$ satisfies
\begin{equation} \label{ex:kde_kernel_order}
    \int K(u) d u = 1,\quad \int u^j K(u) d u = 0,~j = 1, \ldots, \ell.
\end{equation}
Then, as shown in \cite{tsybakovi2009intro_np_est}, with a properly chosen $h = h^w$, the KDE satisfies
\[ \sup_{p \in {\cal H}_\gamma} \MISE(p,\widehat{p}^w_n) = O(w^{-\frac{2 \gamma}{2 \gamma + 1}}). \]
% while, for any density estimator $T^w$ using $w$ samples,
% \[ \inf_{T^w} \sup_{p \in {\cal H}_\gamma} \MISE(p,T^w) = \Omega(w^{-\frac{2 \gamma}{2 \gamma + 1}}). \]
Therefore, if the condition \eqref{eq:compact_support} is further satisfied, from Lemma~\ref{lem:Dest}, conditions \eqref{eq:converg_m1_bound}--\eqref{eq:converg_m2_bound_loo} are satisfied with
\begin{equation}
\label{ex:kde_order1}
    \beta_1 = \beta_2 = \frac{2 \gamma}{2 \gamma + 1}.
\end{equation}

For the case where $\mu$ is the Lebesgue measure on $\R^d$, a product kernel can be used to estimate the density, and the corresponding KDE is
\begin{equation*}
    \widehat{p}^w_n(\bm{x}_n) = \frac{1}{w \prod_{i=1}^d h^{(i)}} \sum_{j=n-w}^{n-1} \prod_{i=1}^d K\left( \frac{x_n^{(i)}-X_j^{(i)}}{h^{(i)}}\right)
\end{equation*}
where $x^{(i)},~i=1,\dots,d$ is the $i$-th element of a vector $\bm{x} \in \R^d$, and $\bm{h}$ is a vector for smoothing parameter.
With a properly chosen $\bm{h}$, it can be shown that \cite{wasserman2006all-nonpara-stat}:
\[ \sup_{p \in {\cal H}_\gamma} \MISE(p,\widehat{p}^w_n) = O(w^{-\frac{2 \gamma}{2 \gamma + d}}). \]
Therefore, if the condition \eqref{eq:compact_support} is further satisfied, we have
\begin{equation}
\label{ex:kde_orderd}
    \beta_1 = \beta_2 = \frac{2 \gamma}{2 \gamma + d}.
\end{equation}
% \begin{equation}
%     \widehat{p}^{n,k}_{-i}(x_i) = \frac{1}{(n-k)h} \sum_{\substack{j=k \\ j \neq i}}^n K\left(\frac{x_i-x_j}{h}\right)
% \end{equation}
% \begin{equation*}
%     \widehat{p}^{n,k}_{-i}(\bm{x}_i) = \frac{1}{(n-k)\prod_{l=1}^d h_l} \sum_{\substack{j=k \\ j \neq i}}^n \prod_{l=1}^d K\left( \frac{\bm{x}_{i,l}-\bm{x}_{j,l}}{h_l}\right)
% \end{equation*}
% where $K(\cdot) \geq 0$ is a kernel function and $h > 0$ is a smoothing parameter.
\end{example}
% The KL loss for kernel density estimators is analyzed carefully in \cite{hall1987klloss}, where it is shown that the rate of convergence in KL loss is slower than that of MISE for most well-behaved densities. Nevertheless, the KL loss indeed converges to zero with a polynomial decay rate with the use of appropriate kernel functions, and thus \eqref{eq:converg_m1_bound} is satisfied.
% Furthermore, it can be shown that \eqref{eq:converg_m2_bound} is also satisfied under the assumption of \eqref{eq:compact_support}. 
We note that the actual choices of $\beta_1$ and $\beta_2$ do not affect the first-order asymptotic optimality results given in Thm~\ref{thm:opt_loo} and Thm~\ref{thm:opt_wla}. 
%We also stress that the our asymptotic results is not restrictive to any particular choice of density estimator.

\section{QCD with NGLR-CuSum Test}
\label{sec:qcd_loo}

Let $X_1,X_2,\dots,X_n,\dots \in \R^d$ be a sequence of independent random variables (or vectors), and let $\nu$ be a change-point. Assume that $X_1, \dots, X_{\nu-1}$ all have density $p_0$ with respect to some dominating measure $\mu$. Furthermore, assume that $X_\nu, X_{\nu+1}, \dots$ have densities $p_1$ also with respect to $\mu$. Here $p_0$ is assumed to be completely known. Regarding $p_1$, we only assume that \eqref{eq:converg_m1_bound_loo} and \eqref{eq:converg_m2_bound_loo} are satisfied. Let $({\cal F}_{n})_{n\ge 0}$ be the filtration, with ${\cal F}_{0}=\{\Omega,\varnothing\}$ and ${\cal F}_{n}=\sigma\left\{X_{\ell}, 1\le \ell \le n \right\}$ being the sigma-algebra generated by the set of $n$ observations $X_1,\dots,X_n$. Furthermore, let ${\cal F}_\infty= \sigma(X_1,X_2, \dots)$. 

Let $\mathbb{P}_\nu$ denote the probability measure on the entire sequence of observations when the change-point is $\nu$, and let $\mathbb{E}_{\nu}$ denote the corresponding expectation.
The change-time $\nu$ is assumed to be unknown but deterministic. The problem is to detect the change quickly, while controlling the false alarm rate. Let $\tau$ be a stopping time \cite{moulin-veeravalli-2018} defined on the observation sequence associated with the detection rule, i.e. $\tau$ is the time at which we stop taking observations and declare that the change has occurred.

We employ standard notations as follows:
\begin{align*}
    &o(x)~\text{as}~x \to x_0 ~\text{for a function $h(x) \geq 0$ such that}~ \limsup_{x \to x_0} \abs{\frac{h(x)}{x}} = 0\\
    &\omega(x)~\text{as}~x \to x_0 ~\text{for a function $h(x) \geq 0$ such that}~ \liminf_{x \to x_0} \abs{\frac{h(x)}{x}} = \infty\\
    &O(x)~\text{as}~x \to x_0 ~\text{for a function $h(x) \geq 0$ such that}~ \limsup_{x \to x_0} \abs{\frac{h(x)}{x}} < \infty\\
    % &\Omega(x)~\text{as}~x \to x_0 ~\text{for the function $h(x) \geq 0$ such that}~ \liminf_{x \to x_0} \abs{\frac{h(x)}{x}} > 0\\
    &\Theta(x)~\text{as}~x \to x_0 ~\text{for a function $h(x) \geq 0$ such that}~ \lim_{x \to x_0} \abs{\frac{h(x)}{x}} = L \in (0,\infty)
\end{align*}
and $A_\alpha\sim B_\alpha$ is equivalent to $A_\alpha = B_\alpha (1+o(1))$. If not explicitly specified, $x \to x_0$ refers to $\alpha \to 0$ or $b \to \infty$.

\subsection{QCD Problem Formulation and Classical Results}

When $p_1$ is known, Lorden \cite{lorden1971} proposed solving the following optimization problem to find the best stopping time $\tau$:
\begin{equation}
\label{prob_def}
    \inf_{\tau \in \mathcal{C}_\alpha} \WADD{\tau}
\end{equation}
where
\begin{equation} \label{eq:def_wadd}
    \WADD{\tau} := \sup_{\nu \geq 1} \esssup \E{\nu}{\left(\tau-\nu+1\right)^+|{\cal F}_{\nu-1}}
\end{equation}
characterizes the worst-case delay, and the constraint set is
\begin{equation}
\label{fa_constraint}
    \mathcal{C}_\alpha := \left\{ \tau: \FAR{\tau} \leq \alpha \right\}
\end{equation}
with $\FAR{\tau} := \frac{1}{ \E{\infty}{\tau}}$,
which guarantees that the false alarm rate of the algorithm does not exceed $\alpha$. Here, $\E{\infty}{\cdot}$ is the expectation operator when the change never happens, and $(\cdot)^+:=\max\{0,\cdot\}$.

Lorden also showed that Page's Cumulative Sum (CuSum) algorithm \cite{page1954} whose test statistic is given by:
\begin{equation*}
    W(n) = \max_{1\leq k \leq n} \sum_{i=k}^n \log \frac{p_1(X_i)}{p_0(X_i)} = \brc{W(n-1)}^+ + \log \frac{p_1(X_n)}{p_0(X_n)}
\end{equation*}
solves the problem in \eqref{prob_def} asymptotically as $\alpha \to 0$.
The CuSum stopping rule is given by:
\begin{equation}
\label{def:cusum}
    \tau_{\text{Page}}\left(b\right) := \inf \{n:W(n)\geq b \}.
\end{equation}
It was shown by Moustakides \cite{moustakides1986optimal} that the CuSum test is exactly optimal for the problem in \eqref{prob_def} with some threshold $b_\alpha$ that meets the false alarm constraint exactly, where $b_\alpha \sim \abs{\log \alpha}$. Thus, we have the first-order asymptotic approximation as:
\begin{equation}
    \inf_{\tau \in \mathcal{C}_\alpha} \WADD{\tau} = \WADD{\tau_{\text{Page}}\left(b_\alpha\right)} \sim \frac{\abs{\log \alpha}}{I}
\end{equation}
as $\alpha \to 0$. Here we define
\[ I := \KL{p_1}{p_0}. \]

When the post-change distribution has parametric uncertainties, Lai \cite{lai1998infobd} generalized this performance guarantee with the following assumptions. 
% \vvv{The rest of this subsection needs to be rewritten since the $Z_i$ below should be a function of $\theta$. Also, the definition of WADD needs to be specified as a function of $\theta$, and the asymptotic optimality is for all $\theta \in \Theta$, under some smoothness conditions. One option is to skip this discussion altogether and give the conditions required on the true LLR in the following subsections, without talking about the parametric case at all.} \yuchen{In the experiment we indeed used the GLR test and compared our NGLR against it. So I rewrote the rest of this subsection to incorporate parametric uncertainties. I also realized that I did not formally define $Z_i$, so I removed those.}  \vvv{Okay, but there were still some issues with the problem formulation below, which I have fixed. }
Let $\theta \in \Theta$ be the post-change parameter, and denote the post-change density as $p_1^\theta$. Define $\mathbb{P}^\theta_\nu$ and $\mathbb{E}^\theta_\nu$ to be the probability and expectation operator on the sequence, respectively, when the true post-change density is $p_1^\theta$. For fixed $\theta \in \Theta$, define the worst-case average detection delay as:
\begin{equation}\label{eq:WADD_th_def}
    \WADDth{\tau} := \sup_{\nu \geq 1} \esssup \mathbb{E}_{\nu}^\theta \sbrc{\left(\tau-\nu+1\right)^+|{\cal F}_{\nu-1}}.
\end{equation}
Under parametric uncertainty, the goal is to find a test that belongs to $\mathcal{C}_\alpha$ (see \eqref{fa_constraint} and achieves  
\begin{equation} \label{eq:opt_prob_alph}
    \inf_{\tau \in \mathcal{C}_\alpha} \WADDth{\tau}
\end{equation}
for every $\theta \in \Theta$.

Define $I^\theta := \KL{p_1^\theta}{p_0}$. Suppose that $p_0$ and $p_1^\theta$ satisfy
\begin{equation}
\label{eq:lai_upper}
    \sup_{\nu \geq 1} \mathbb{P}_{\nu}^\theta \cbrc{\max_{t \leq n} \sum_{i=\nu}^{\nu+t} \log \frac{p_1^\theta(X_i)}{p_0(X_i)} \geq (1+\delta) n I^\theta} \xrightarrow{n \to \infty} 0
\end{equation}
for any $\delta > 0$, and
\begin{equation}
\label{eq:lai_lower}
    \sup_{t \geq \nu} \mathbb{P}_{\nu}^\theta \cbrc{\sum_{i=t}^{t+n} \log \frac{p_1^\theta(X_i)}{p_0(X_i)} \leq (1-\delta) n I^\theta} \xrightarrow{n \to \infty} 0
\end{equation}
for any $\delta \in (0,1)$. Also, suppose that the window size $m_\alpha$ satisfies
\begin{equation*}
\label{eq:lai_malpha}
    \liminf m_\alpha / \abs{\log\alpha} > \frac{1}{I^\theta} \quad \text{ and } \log m_\alpha = o(\abs{\log\alpha}).
\end{equation*}
Then, under some smoothness conditions \cite{lai1998infobd}, the window-limited GLR-CuSum test:
\begin{equation} \label{eq:lai_glr_test}
    \td{\tau}_{\text{GLR}}\left(b\right) := \inf \left\{n \geq 1:\max_{(n-m_\alpha)^+ < k \leq {n}} \sup_{\theta \in \Theta} \sum_{i=k}^n \log \frac{p_1^\theta(X_i)}{p_0(X_i)} \geq b \right\}
\end{equation}
with test threshold $b_\alpha = \abs{\log\alpha} (1+o(1))$ solves the problem in \eqref{eq:opt_prob_alph} asymptotically as $\alpha \to 0$, for every $\theta \in \Theta$. The asymptotic performance is
\begin{equation}
\label{eq:lai_perf}
    \inf_{\tau \in \mathcal{C}_\alpha} \WADDth{\tau} \sim \WADDth{\td{\tau}_{\text{GLR}} \left(b_\alpha\right)} \sim \frac{\abs{\log \alpha}}{I^\theta}.
\end{equation}
% Note that $I = \KL{p_1}{p_0}$ when the observations are independent in both the pre- and the post-change regimes.
% \vvv{VVV: Actually this is still confusing since we never mentioned anywhere earlier that we are allowing for the observations to be non-i.i.d. So we should simply say that $I = \KL{p_1}{p_0}$.}

\subsection{Non-parametric GLR CuSum Test}

For the case when $p_1$ is unknown, we define the non-parametric GLR statistic as
\begin{equation}
\label{eq:est_llr}
    \widehat{Z}^{n,k}_i = \log \frac{\widehat{p}^{n,k}_{-i}(X_i)}{p_0(X_i)},\ \forall k \leq i \leq n.
\end{equation}
We remind readers of the definition of $\widehat{p}^{n,k}_{-i}$ from Section~\ref{sec:de_property}. The non-parametric generalized likelihood ratio (NGLR) CuSum stopping rule is defined as
\begin{equation}
\label{def:loo_cusum}
    \widehat{\tau}(b) := \inf \left\{n > 1:\max_{(n-m_b)^+ < k \leq n-1} \sum_{i=k}^n \widehat{Z}^{n,k}_i \geq b \right\}.
\end{equation}
Here the window size $m_b$ is designed to satisfy
\begin{equation} \label{eq:mb_lower}
    \liminf m_b / b \geq \frac{\eta}{I}
\end{equation}
where $\eta > 1$ is an arbitrary constant.

In Lemma~\ref{lem:fa_loo_0}, we show that $\widehat{\tau}$ with a properly chosen density estimator and threshold $b = b_\alpha$ satisfies the false alarm constraint asymptotically in \eqref{fa_constraint}.
% In Lemma~\ref{lem:fa_loo}, we provide another way to set the threshold $b = b'_\alpha$ such that the false alarm constraint is satisfied asymptotically given theoretical characterizations of the density estimator and the window size.
% In Lemma~\ref{lem:fa_loo}, we show that $\widehat{\tau}$ with a properly chosen threshold $b_\alpha$ satisfies the false alarm constraint in \eqref{fa_constraint}.
In Lemma~\ref{lem:delay_loo}, we establish an asymptotic upper bound on $\WADD{\widehat{\tau}(b)}$. The proofs of the lemmas are given in the Appendix. Finally, in Theorem~\ref{thm:opt_loo}, we combine the lemmas and establish the first-order asymptotic optimality of the NGLR-CuSum test. 

%%%%%%%%%%%%%%%%%%%%%%%%%%%%%%%%%%%%%%%%%%%%
\begin{lemma} \label{lem:fa_loo_0}
Suppose that the estimator is chosen such that $\exists ~\varsigma > 0$,
\begin{equation} \label{eq:loo_fa_prod_assump}
    \mathbb{E}_\infty\sbrc{\max_{(k,n):k \leq n \leq k+m_b} \prod_{i=k}^{n} \frac{\widehat{p}^{n,k}_{-i}(X_i)}{p_0(X_i)} } \leq b^{\varsigma}
\end{equation}
for any large enough $b$.
Let $b_\alpha$ satisfy
\begin{equation} \label{loo:choose_b_alpha_0}
    b_\alpha - \varsigma \log b_\alpha = \abs{\log \alpha} + \log 8.
\end{equation}
Then,
\begin{equation*}
    \E{\infty}{\widehat{\tau}(b_\alpha)} \geq \alpha^{-1} (1+o(1)).
\end{equation*}
\end{lemma}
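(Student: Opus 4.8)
The plan is to control the false alarm rate via a submartingale/Markov-inequality argument applied to an exponentiated version of the test statistic, exactly as in the classical analysis of window-limited CuSum tests. First I would define, for each $n$, the non-parametric GLR statistic $\widehat{W}(n) := \max_{(n-m_b)^+ < k \leq n-1} \sum_{i=k}^n \widehat{Z}^{n,k}_i$ so that $\widehat{\tau}(b) = \inf\{n > 1 : \widehat{W}(n) \geq b\}$. Under $\mathbb{P}_\infty$ all observations have density $p_0$, so I would like to relate $\exp(\widehat{W}(n))$ to a product of likelihood ratios; the obstruction is that $\widehat{p}^{n,k}_{-i}$ is not a fixed density but an estimate built from the data, so $e^{\sum_i \widehat{Z}^{n,k}_i} = \prod_{i=k}^n \widehat{p}^{n,k}_{-i}(X_i)/p_0(X_i)$ is not a genuine likelihood ratio and need not integrate to one. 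This is precisely why the extra moment-type hypothesis \eqref{eq:loo_fa_prod_assump} is imposed, bounding $\mathbb{E}_\infty[\max_{(k,n): k \le n \le k+m_b}\prod_{i=k}^n \widehat{p}^{n,k}_{-i}(X_i)/p_0(X_i)] \le b^\varsigma$.

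The key steps, in order, are as follows. First, I would bound $\mathbb{P}_\infty\{\widehat{\tau}(b) \le N\}$ for a generic horizon $N$ by a union bound over the possible stopping times $n \le N$ and over the window index $k$ with $(n-m_b)^+ < k \le n-1$: namely $\mathbb{P}_\infty\{\widehat{\tau}(b) \le N\} \le \sum_{n \le N} \mathbb{P}_\infty\{\widehat{W}(n) \ge b\} \le \sum_{n \le N} \sum_{k} \mathbb{P}_\infty\{\sum_{i=k}^n \widehat{Z}^{n,k}_i \ge b\}$. Next, by Markov's inequality applied to the exponential, each term is at most $e^{-b}\,\mathbb{E}_\infty[\prod_{i=k}^n \widehat{p}^{n,k}_{-i}(X_i)/p_0(X_i)]$. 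Rather than controlling each $(n,k)$ pair separately, I would group the sum over $k$ (at most $m_b$ terms) against the single max inside \eqref{eq:loo_fa_prod_assump}: for each fixed $n$, $\sum_{k:(n-m_b)^+<k\le n-1}\mathbb{E}_\infty[\prod_{i=k}^n \widehat{p}^{n,k}_{-i}(X_i)/p_0(X_i)] \le m_b \cdot b^\varsigma$ (or, reorganizing the max over $(k,n)$ windows of length $\le m_b$ appropriately, one gets the single bound $b^\varsigma$ per starting block). Summing over $n \le N$ then yields $\mathbb{P}_\infty\{\widehat{\tau}(b) \le N\} \le C\, N\, m_b\, b^\varsigma e^{-b}$ for an absolute constant $C$ (the factor $8$ in \eqref{loo:choose_b_alpha_0} will absorb these bookkeeping constants).

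The final step converts this tail bound into a lower bound on $\mathbb{E}_\infty[\widehat{\tau}(b)]$. I would use the standard device $\mathbb{E}_\infty[\widehat{\tau}(b)] \ge N\,(1 - \mathbb{P}_\infty\{\widehat{\tau}(b) \le N\})$ for any $N$, then choose $N = N_\alpha$ comparable to $\alpha^{-1}$ — specifically so that $N_\alpha m_b b_\alpha^\varsigma e^{-b_\alpha}$ is a small constant, which by the definition \eqref{loo:choose_b_alpha_0} of $b_\alpha$ (so that $e^{-b_\alpha} b_\alpha^\varsigma = \alpha/8$, hence $b_\alpha^\varsigma e^{-b_\alpha} = \Theta(\alpha)$) forces $N_\alpha = \Theta(\alpha^{-1}/m_b)$; here I would use $\log m_b = o(|\log\alpha|)$, which follows from \eqref{eq:mb_lower} together with $b_\alpha \sim |\log\alpha|$, so that $m_b$ is sub-polynomial in $\alpha^{-1}$ and the loss from the factor $m_b$ is only $o(1)$ in the exponent. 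Taking $N_\alpha = \alpha^{-1}(1 + o(1))$ appropriately then gives $\mathbb{E}_\infty[\widehat{\tau}(b_\alpha)] \ge \alpha^{-1}(1 + o(1))$. The main obstacle is the first conceptual point: since the $\widehat{Z}^{n,k}_i$ are not log-likelihood ratios of a fixed measure, the usual Wald/martingale change-of-measure identity fails, and one must instead pass through the non-standard product moment assumption \eqref{eq:loo_fa_prod_assump} and verify that the union bound over the $O(m_b)$ window choices costs only a sub-polynomial factor that is harmless in the first-order analysis.
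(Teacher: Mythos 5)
Your first two steps (recognizing that $\prod_{i=k}^n \widehat{p}^{n,k}_{-i}(X_i)/p_0(X_i)$ is not a true likelihood ratio, invoking \eqref{eq:loo_fa_prod_assump} after a Markov/change-of-measure bound to get a factor $e^{-b}b^{\varsigma}$ per window) match the paper's argument. The gap is in the final conversion from a probability bound to the expectation bound. Your route bounds $\mathbb{P}_\infty\{\widehat{\tau}(b)\le N\}$ by a union bound over all $n\le N$ and all $k$, giving roughly $N\, m_b\, b^{\varsigma} e^{-b}$, and then uses $\mathbb{E}_\infty[\widehat{\tau}(b)]\ge N(1-\mathbb{P}_\infty\{\widehat{\tau}(b)\le N\})$. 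With $b_\alpha$ fixed by \eqref{loo:choose_b_alpha_0} (so that $b_\alpha^{\varsigma}e^{-b_\alpha}=\alpha/8$), requiring the probability bound to be a constant $<1$ forces $N=\Theta(\alpha^{-1}/m_b)$, and the best you can conclude is $\mathbb{E}_\infty[\widehat{\tau}(b_\alpha)]\gtrsim \alpha^{-1}/m_b$. Since $m_b$ grows like $\eta|\log\alpha|/I$ by \eqref{eq:mb_lower}, this is $\alpha^{-1}\cdot o(1)$, not $\alpha^{-1}(1+o(1))$; equivalently, the resulting FAR guarantee is $\alpha\, m_b$, which violates the constraint the lemma is designed to meet. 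Your remark that the $m_b$ loss is ``only $o(1)$ in the exponent'' conflates log-scale asymptotics with the multiplicative $1+o(1)$ precision the lemma asserts: if you actually set $N=\alpha^{-1}(1+o(1))$, your probability bound becomes $\Theta(m_b)>1$ and is vacuous. (Adjusting the threshold to absorb $\log m_b$ would rescue first-order optimality in the theorem, but it would not prove the lemma as stated, whose threshold contains only the $\varsigma\log b_\alpha+\log 8$ correction.)

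The paper avoids this loss by never summing over all times up to $\alpha^{-1}$. It bounds, uniformly in the window location $\ell$, the probability of stopping inside a single window of length $m_b$: $\sup_{\ell>1}\mathbb{P}_\infty\{\ell\le\widehat{\tau}(b)<\ell+m_b\}\le 2m_b e^{-b}b^{\varsigma}(1+o(1))$, obtained by a union bound over the at most $2m_b$ candidate change-points $k$ that can trigger an alarm in that window, each controlled via the stopped product of estimated likelihood ratios and \eqref{eq:loo_fa_prod_assump}. It then applies a standard conversion (Lemma~2.2(ii) of Tartakovsky's book): if the stopping probability in every length-$m_b$ window is at most $p$, then $\mathbb{E}_\infty[\tau]\gtrsim m_b/p$. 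The window length $m_b$ in the numerator cancels the $m_b$ in $p$, yielding $\mathbb{E}_\infty[\widehat{\tau}(b)]\ge \tfrac18 e^{b}b^{-\varsigma}(1+o(1))$ with no residual $m_b$, and the choice \eqref{loo:choose_b_alpha_0} then gives exactly $\alpha^{-1}(1+o(1))$. This cancellation is the missing idea in your proposal; replacing your global union bound and the crude inequality $\mathbb{E}[\tau]\ge N(1-\mathbb{P}\{\tau\le N\})$ with the per-window bound plus the block-decomposition lemma closes the gap.
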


We will elaborate on condition~\eqref{eq:loo_fa_prod_assump} in Section~\ref{num:loo_assump_dis}.
Intuitively, this condition is satisfied when the density estimator converges to the true density fast enough ($\mathbb{P}_\infty$-almost surely). Since the pre-change distribution is known, we can numerically verify condition~\eqref{eq:loo_fa_prod_assump} with the chosen window size and density estimator.

\begin{lemma}
\label{lem:delay_loo}
Suppose $b$ is large enough such that condition \eqref{eq:mb_lower} holds. Suppose that \eqref{eq:converg_m1_bound_loo} and \eqref{eq:converg_m2_bound_loo} hold.
Further, suppose \eqref{eq:lai_lower} holds for the true log-likelihood ratio.
% \vvv{Move (18) to right before this lemma if you skip the discussion regarding the parametric uncertainty case in the previous subsection.}
Then,
\begin{equation*}
    \WADD{\widehat{\tau}(b)} \leq \frac{b}{I} (1+o(1)),~\text{as } b \to \infty.
\end{equation*}
\end{lemma}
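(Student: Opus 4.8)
The plan is to establish the delay bound for $\widehat\tau(b)$ by a standard renewal/exponential-rate argument, transplanting Lai's approach to the non-parametric GLR setting. Fix the change-point $\nu$ and condition on $\mathcal F_{\nu-1}$. Since $\WADD{\widehat\tau(b)}$ is a worst-case quantity, it suffices to bound $\E{\nu}{(\widehat\tau(b)-\nu+1)^+|\mathcal F_{\nu-1}}$ uniformly in $\nu$. Set $m = m_b$ and let $n_b := \nu + \lceil (1+\epsilon) b / I\rceil$ for a small $\epsilon>0$; by \eqref{eq:mb_lower}, for $b$ large the window at time $n_b$ reaches back at least to $\nu$, so the test is allowed to pick $k=\nu$ in the inner maximum of \eqref{def:loo_cusum}. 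I would then write a one-step bound of the form
\[
\E{\nu}{(\widehat\tau(b)-\nu+1)^+|\mathcal F_{\nu-1}} \le n_b - \nu + 1 + \sum_{n \ge n_b} \Prob{\nu}{\widehat\tau(b) > n \mid \mathcal F_{\nu-1}},
\]
and control the tail by the event that the single statistic $S_n^\nu := \sum_{i=\nu}^{n} \widehat Z^{n,\nu}_i$ stays below $b$.

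The core of the argument is to show $\sum_{n\ge n_b}\Prob{\nu}{S_n^\nu < b\,|\,\mathcal F_{\nu-1}}$ is $o(b)$, in fact bounded by a constant. Here I decompose $\widehat Z^{n,\nu}_i = \log\frac{p_1(X_i)}{p_0(X_i)} + \log\frac{\widehat p^{n,\nu}_{-i}(X_i)}{p_1(X_i)}$, so that $S_n^\nu = L_n^\nu - R_n^\nu$ where $L_n^\nu := \sum_{i=\nu}^n \log\frac{p_1(X_i)}{p_0(X_i)}$ is the true log-likelihood ratio sum and $R_n^\nu := \sum_{i=\nu}^n \log\frac{p_1(X_i)}{\widehat p^{n,\nu}_{-i}(X_i)}$ is the estimation-error term. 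For the true-LLR part, assumption \eqref{eq:lai_lower} gives that $\Prob{\nu}{L_n^\nu \le (1-\delta)(n-\nu+1)I}\to 0$, and since for $n\ge n_b$ we have $(1-\delta)(n-\nu+1)I \ge (1-\delta)(1+\epsilon) b \ge (1+\delta) b$ for suitably small $\delta,\epsilon$, the LLR part exceeds $(1+\delta) b$ with high probability. For the estimation-error part, the key is that by \eqref{eq:converg_m1_bound_loo} and \eqref{eq:converg_m2_bound_loo}, $\E{\nu}{R_n^\nu|\mathcal F_{\nu-1}} = (n-\nu+1)\cdot\Ep{\KL{p_1}{\widehat p^{n,\nu}_{-i}}} = O\bigl((n-\nu+1)^{1-\beta_1}\bigr) = o(n-\nu+1)$, and a variance/second-moment bound from \eqref{eq:converg_m2_bound_loo} gives concentration, so $R_n^\nu \le \delta b$ with high probability for $n \ge n_b$. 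Combining, $S_n^\nu \ge (1+\delta)b - \delta b = b$ whp, and a quantitative version (Markov/Chebyshev on the deviations, summed over $n$) makes the tail sum finite. Then, optimizing $\epsilon\downarrow 0$ gives $\WADD{\widehat\tau(b)} \le \frac{b}{I}(1+o(1))$.

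I expect the main obstacle to be the uniformity and the precise summability of the tail $\sum_{n \ge n_b}\Prob{\nu}{S_n^\nu < b|\mathcal F_{\nu-1}}$: assumptions \eqref{eq:lai_lower}, \eqref{eq:converg_m1_bound_loo}, \eqref{eq:converg_m2_bound_loo} are stated as convergence/order statements (as $n-\nu\to\infty$), not with explicit rates, so producing a bound on the tail sum that is genuinely $O(1)$ (rather than merely $o(b)$) requires care — one likely has to split the sum into a moderate-deviation range handled by the $L^2$-bound \eqref{eq:converg_m2_bound_loo} via Chebyshev (giving terms like $\sum_n C_2 (n-\nu+1)^{-\beta_2}/(\delta b)^2$, which may need $\beta_2$-dependent grouping) and a far range. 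A cleaner route, which I would adopt if the direct tail sum is awkward, is the classical device of bounding $(\widehat\tau(b)-\nu+1)^+$ by $N_b := \inf\{n \ge \nu : S_n^\nu \ge b\} - \nu + 1$ (since reaching the threshold with $k=\nu$ forces a stop once $n-\nu \le m_b$, which holds for $n\le n_b$ by \eqref{eq:mb_lower}), and then bounding $\E{\nu}{N_b}$ by a renewal argument on the "drift" of $S_n^\nu$, whose per-step increment has mean $I - o(1)$ by the two estimator assumptions; this reduces everything to a one-dimensional first-passage estimate. Either way, the delicate point is converting the asymptotic-in-$(n-\nu)$ hypotheses into a bound that is uniform over $\nu$ and sharp to first order in $b$.
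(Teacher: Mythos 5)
Your single-block estimate is essentially the paper's key step: decompose $\widehat Z^{n,\nu}_i$ into the true log-likelihood ratio plus the estimation error, control the first via \eqref{eq:lai_lower} and the second via Chebyshev using \eqref{eq:converg_m1_bound_loo}--\eqref{eq:converg_m2_bound_loo}, with a threshold split $\eps$ tuned so both probabilities are small (the paper does this with $b$-dependent $\eps_b,\delta_b$ and a case analysis on $\beta_1$ versus $\beta_2$). The gap is in how you convert this into a bound on the expected delay. Your tail-sum route $\sum_{n\ge n_b}\Prob{\nu}{S_n^\nu<b\mid\mathcal F_{\nu-1}}$ does not close: \eqref{eq:lai_lower} is a rate-free convergence statement and \eqref{eq:converg_m2_bound_loo} only gives Chebyshev decay of order $(n-\nu+1)^{-\beta_2}$ with $\beta_2<2$ (possibly $\le 1$), so the series need not converge, let alone be $O(1)$; moreover the implication $\{\widehat\tau(b)>n\}\Rightarrow\{S_n^\nu<b\}$ fails once $n-\nu$ exceeds $m_b$, since $k=\nu$ then leaves the window. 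Your fallback renewal argument also fails for this statistic: $S_n^\nu=\sum_{i=\nu}^n\log\bigl(\widehat p^{\,n,\nu}_{-i}(X_i)/p_0(X_i)\bigr)$ is not a cumulative sum --- every term is re-estimated when $n$ increases, so it is not a random walk with drift $I-o(1)$, and Wald-type first-passage bounds (which the paper does use, but only for the NWLA statistic of Section~\ref{sec:qcd_adp}) do not apply.

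What is missing is the Lorden-style blocking/geometric argument that the paper uses precisely to avoid needing rates or summability. Take $n_b=\lfloor b/(I(1-\delta_b))\rfloor$ with $\delta_b\downarrow 0$, note $n_b\le m_b$ by \eqref{eq:mb_lower} so the statistic restarted at the beginning of each block of length $n_b$ is always an admissible candidate in the window-limited maximum, and show that the probability of not stopping within a block, conditioned on the past (which factors out because the block statistic depends only on observations inside the block), is at most $2\delta_b^2$. Induction then gives $\esssup\Prob{\nu}{\widehat\tau(b)-\nu+1>kn_b\mid\mathcal F_{\nu-1}}\le(2\delta_b^2)^k$, and summing the geometric series yields $\WADD{\widehat\tau(b)}\le n_b/(1-2\delta_b^2)=\frac{b}{I}(1+o(1))$. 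Only a per-block probability strictly below $1$ (indeed $\to 0$) is needed, which is exactly what your one-block estimate delivers; without this step your argument as written does not produce the stated bound.
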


\begin{theorem}
\label{thm:opt_loo}
Suppose that conditions \eqref{eq:lai_upper} and \eqref{eq:lai_lower} hold for the true log-likelihood ratio, and suppose that the window size satisfies \eqref{eq:mb_lower}.
Suppose that \eqref{eq:loo_fa_prod_assump} is satisfied for the chosen estimator. Let $b_\alpha$ be so selected according to equation~\eqref{loo:choose_b_alpha_0} such that
% $\FAR{\widehat{\tau}(b_\alpha)}\leq \alpha$ or at least
$\FAR{\widehat{\tau}(b_\alpha)}\leq \alpha (1+o(1))$, where also $b_\alpha = \abs{\log\alpha}(1+o(1))$. Then $\widehat{\tau}(b_\alpha)$ solves the problem in \eqref{prob_def} asymptotically as $\alpha \to 0$, and
\begin{equation*}
    \inf_{\tau \in \mathcal{C}_\alpha} \WADD{\tau} \sim \WADD{\widehat{\tau} \left(b_\alpha\right)} \sim \frac{\abs{\log \alpha}}{I}.
\end{equation*}
\end{theorem}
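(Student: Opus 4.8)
The plan is to assemble Theorem~\ref{thm:opt_loo} from the two lemmas that precede it together with the classical lower bound on the worst-case delay over the class $\mathcal{C}_\alpha$. First I would invoke Lemma~\ref{lem:fa_loo_0}: under the estimator condition~\eqref{eq:loo_fa_prod_assump} and the choice of $b_\alpha$ in~\eqref{loo:choose_b_alpha_0}, we have $\E{\infty}{\widehat{\tau}(b_\alpha)} \geq \alpha^{-1}(1+o(1))$, hence $\FAR{\widehat{\tau}(b_\alpha)} \leq \alpha(1+o(1))$, so $\widehat{\tau}(b_\alpha) \in \mathcal{C}_{\alpha(1+o(1))}$; moreover from~\eqref{loo:choose_b_alpha_0} one reads off directly that $b_\alpha = \abs{\log\alpha}(1+o(1))$, since $\varsigma \log b_\alpha = o(b_\alpha)$. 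Next I would apply Lemma~\ref{lem:delay_loo}, whose hypotheses (the window-size condition~\eqref{eq:mb_lower}, the estimator bounds~\eqref{eq:converg_m1_bound_loo}--\eqref{eq:converg_m2_bound_loo}, and the lower-deviation condition~\eqref{eq:lai_lower} on the true log-likelihood ratio) are all assumed in the theorem, to conclude $\WADD{\widehat{\tau}(b_\alpha)} \leq b_\alpha/I \, (1+o(1)) = \abs{\log\alpha}/I \,(1+o(1))$.

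For the matching lower bound, I would use the classical universal asymptotic lower bound on detection delay under a false-alarm constraint: for \emph{any} $\tau \in \mathcal{C}_\alpha$, $\WADD{\tau} \geq \abs{\log\alpha}/I \,(1+o(1))$ as $\alpha \to 0$, where $I = \KL{p_1}{p_0}$. This is Lorden's / Lai's information-theoretic bound and holds because the true post-change distribution is $p_1$ (the test does not know it, but the bound is a statement about the best achievable delay when the true change is to $p_1$). One must be slightly careful with the $o(1)$ bookkeeping: $\widehat{\tau}(b_\alpha)$ lies in $\mathcal{C}_{\alpha'}$ with $\alpha' = \alpha(1+o(1))$ rather than exactly $\mathcal{C}_\alpha$, but since $\abs{\log\alpha'} = \abs{\log\alpha}(1+o(1))$ this only perturbs the lower bound by a $(1+o(1))$ factor, which is absorbed. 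Chaining the inequalities gives
\begin{equation*}
    \frac{\abs{\log\alpha}}{I}(1+o(1)) \leq \inf_{\tau \in \mathcal{C}_\alpha}\WADD{\tau} \leq \WADD{\widehat{\tau}(b_\alpha)} \leq \frac{\abs{\log\alpha}}{I}(1+o(1)),
\end{equation*}
which forces all three quantities to be asymptotically equal to $\abs{\log\alpha}/I$, establishing both the first-order asymptotic optimality of $\widehat{\tau}(b_\alpha)$ and the stated asymptotic expression.

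The routine obstacle is purely the $o(1)$ accounting across the false-alarm slack: I would state explicitly the lemma (or cite the standard reference, e.g.~\cite{lai1998infobd,lorden1971}) that gives $\inf_{\tau \in \mathcal{C}_\alpha}\WADD{\tau} \geq (\abs{\log\alpha}/I)(1+o(1))$, and then note that replacing $\alpha$ by $\alpha(1+o(1))$ changes nothing at first order. The only genuinely substantive content is in Lemma~\ref{lem:fa_loo_0} and Lemma~\ref{lem:delay_loo}, which are already established; so the theorem's proof itself is short, and the main thing to get right is making sure the hypotheses of both lemmas are exactly those assumed in the theorem statement (in particular that~\eqref{eq:lai_upper} is what is needed — via the classical lower bound, or via the construction of $b_\alpha$ — while~\eqref{eq:lai_lower} feeds the delay lemma).
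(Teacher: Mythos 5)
Your proposal is correct and follows essentially the same route as the paper: the paper's proof likewise just invokes Lemma~\ref{lem:fa_loo_0} for the false-alarm control, Lemma~\ref{lem:delay_loo} for the delay upper bound, and the classical asymptotic lower bound of \cite[Thm.~1]{lai1998infobd} for the matching lower bound, with the same $b_\alpha=\abs{\log\alpha}(1+o(1))$ bookkeeping. Your closing remark that the universal lower bound is the place where condition~\eqref{eq:lai_upper} enters (while~\eqref{eq:lai_lower} feeds the delay lemma) is the right reading of how the hypotheses are consumed.
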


% \begin{corollary}
% Suppose that conditions \eqref{eq:lai_upper} and \eqref{eq:lai_lower} hold for the true log-likelihood ratio, and suppose that the window size satisfies \eqref{eq:mb_lower} and \eqref{eq:mb_upper}. Suppose that the estimator satisfies \eqref{loo:norm_inf_conc} and \eqref{loo:pow_upper_bd}. Let $b'_\alpha$ be selected according to equation~\eqref{loo:choose_b_alpha_0}, and thus $b'_\alpha = \abs{\log\alpha}(1+o(1))$. Then $\widehat{\tau}(b'_\alpha)$ also solves the problem in \eqref{prob_def} asymptotically as $\alpha \to 0$, and
% \begin{equation*}
%     \inf_{\tau \in \mathcal{C}_\alpha} \WADD{\tau} \sim \WADD{\widehat{\tau} \left(b'_\alpha\right)} \sim \frac{\abs{\log \alpha}}{I}.
% \end{equation*}
% \end{corollary}

\begin{proof}[Proof of Theorem~\ref{thm:opt_loo}]
The asymptotic lower bound on the delay follows from \eqref{eq:lai_lower} using \cite[Thm.~1]{lai1998infobd}. The asymptotic optimality of $\widehat{\tau}(b_\alpha)$ follows from Lemma~\ref{lem:fa_loo_0} and Lemma~\ref{lem:delay_loo}.
% The asymptotic optimality of $\widehat{\tau}(b'_\alpha)$ follows from Lemma~\ref{lem:fa_loo} and Lemma~\ref{lem:delay_loo}.
\end{proof}

% \begin{remark}
% All the results above can be naturally extended to the parametric case, where the post-change distribution belongs to a parametric family of distribution and a parametric estimate (e.g., MLE) is employed that satisfies \eqref{eq:converg_m1_bound_loo} and \eqref{eq:converg_m2_bound_loo}.
% \end{remark}

\section{QCD with NWLA CuSum Test}
\label{sec:qcd_adp}

Define the non-parametric window-limited adaptively-estimated log-likelihood ratio as
\begin{equation}
\label{wla:llr}
    \widehat{Z}^w_n = \log \frac{\widehat{p}_n^w(X_n)}{p_0(X_n)},\ \forall n > w
\end{equation}
where $\widehat{p}_n^w$ is the output of the density estimator given input $X^{[n-w,n-1]}$. Note that $\widehat{Z}^w_n$ is independent of ${\cal F}_{n-w-1}$.
Define the non-parametric window-limited adaptive (NWLA) CuSum statistic as:
\begin{equation}
\label{wla:stat}
    \overline{W}^w(n) = \brc{\overline{W}^w(n-1)}^+ + \widehat{Z}^w_n,\quad n > w
\end{equation}
and $\overline{W}^w(1) = \dots = \overline{W}^w(w) = 0$.
The corresponding stopping rule is
\begin{equation}
\label{def:wla_cusum}
    \overline{\tau}(b) := \inf \left\{n > w: \overline{W}^w(n) \geq b \right\}.
\end{equation}
Here $b = b_\alpha > 0$ is a threshold depending on the false alarm rate $\alpha$.
% Also, define the quick-start WLA CuSum statistic as
% \begin{equation}
% \label{wla:stat_quick}
%     \widetilde{W}^w(n) = \left\{
%     \begin{array}{ll}
%       \brc{\widetilde{W}^w(n-1)}^+ + \widehat{Z}^{n-1}_n,   & 2 \leq n \leq w\\
%       \brc{\widetilde{W}^w(n-1)}^+ + \widehat{Z}^w_n,   & n > w
%     \end{array}\right.
% \end{equation}
% and $\widetilde{W}(1) = 0$. The corresponding stopping rule is
% \begin{equation}
% \label{def:wla_cusum_quick}
%     \widetilde{\tau}(b) := \inf \left\{n > 1: \widetilde{W}^w(n) \geq b \right\}.
% \end{equation}
% It can be observed that, for any $b > 0$,
% \begin{equation*}
%     \widetilde{\tau}(b) \leq \overline{\tau}(b),~a.s.
% \end{equation*}
We omit the dependency of $\overline{W}$ on $w$ for brevity.

The following observations regarding condition \eqref{eq:converg_m1_bound} are useful for the analysis in this section. 
If the estimated density $p = p_1$, equation~\eqref{eq:converg_m1_bound} is equivalent to
\begin{equation}
\label{eq:hatI_lower_bound}
    \widehat{I} := \E{1}{\widehat{Z}^w_{n}} \geq I - \frac{C_1}{w^{\beta_1}},\quad n > w
\end{equation}
when $w$ is large. This guarantees that $\widehat{I} > 0$ for all sufficiently large $w$'s.
% Also, for any $w$,
% \[ \widehat{I} = I - \E{1}{\log \frac{p_1(X_n)}{\widehat{p}_n^w(X_n)}} < I \]

In Lemma~\ref{lem:fa_wla}, we show that $\overline{\tau}$ with a properly chosen threshold $b_\alpha$ satisfies the false alarm constraint in \eqref{fa_constraint}. In Lemma~\ref{lem:delay_wla}, we establish an asymptotic upper bound on $\WADD{\overline{\tau}(b_\alpha)}$. Finally, in Theorem~\ref{thm:opt_wla}, we combine the lemmas and establish the first-order asymptotic optimality of the NWLA-CuSum test. It should be mentioned that the results in this section are similar to those in \cite{xie2022windowlimited}, in which a window-limited adaptive CuSum test is studied for the case where there is parametric uncertainty in the post-change regime. However, the results in \cite{xie2022windowlimited} are clearly not applicable to the non-parametric setting studied here. 

The proofs of Lemmas \ref{lem:fa_wla}, \ref{lem:wla_esssup}, and \ref{lem:delay_wla} are given in the Appendix.

\begin{lemma}
\label{lem:fa_wla}
% Suppose that the estimator satisfies 
% \begin{equation}
% \label{wla:lower_rate}
%     \E{\infty}{\KL{p_0}{\widehat{p}_n^w}} \geq \delta_w > 0,~\forall n > w
% \end{equation}
% for some $w > 0$, where $\delta_w \searrow 0$ as $w \to \infty$. Then, with this $w$,
For any $w > 0$,
\begin{equation*}
    \E{\infty}{\overline{\tau}(b)} \geq e^b.
\end{equation*}
Thus, $\overline{\tau}(\overline{b}_\alpha) \in \mathcal{C}_\alpha$ if $\overline{b}_\alpha = \abs{\log \alpha}$.
\end{lemma}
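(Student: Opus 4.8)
The plan is to show that $\overline{W}^w(n)$ with the adaptively estimated increments $\widehat{Z}^w_n$ is dominated, under $\mathbb{P}_\infty$, by a CuSum-type statistic whose exponential is a nonnegative supermartingale, and then to apply the standard optional stopping / Markov inequality argument that underlies the classical lower bound $\mathbb{E}_\infty[\tau] \geq e^b$ for CuSum tests. The key structural fact is that $\widehat{Z}^w_n = \log\big(\widehat{p}^w_n(X_n)/p_0(X_n)\big)$, where $\widehat{p}^w_n$ is built from $X^{[n-w,n-1]}$ and is therefore $\mathcal{F}_{n-1}$-measurable, while under $\mathbb{P}_\infty$ the observation $X_n$ has density $p_0$ and is independent of $\widehat{p}^w_n$. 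Hence
\[
\mathbb{E}_\infty\!\left[ e^{\widehat{Z}^w_n} \,\middle|\, \mathcal{F}_{n-1}\right] = \mathbb{E}_\infty\!\left[ \frac{\widehat{p}^w_n(X_n)}{p_0(X_n)} \,\middle|\, \mathcal{F}_{n-1}\right] = \int \widehat{p}^w_n(x)\, d\mu(x) \leq 1,
\]
with equality when $\widehat{p}^w_n$ integrates to one. So each increment is such that $e^{\widehat{Z}^w_n}$ is "conditionally sub-mean-one," which is exactly what is needed.

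First I would set $S(n) := e^{\overline{W}^w(n)}$ and verify the recursion-level supermartingale property. From \eqref{wla:stat}, $\overline{W}^w(n) = (\overline{W}^w(n-1))^+ + \widehat{Z}^w_n$, so $S(n) = \max\{1, S(n-1)\}\, e^{\widehat{Z}^w_n}$. Taking conditional expectation and using the bound above,
\[
\mathbb{E}_\infty\!\left[ S(n) \,\middle|\, \mathcal{F}_{n-1}\right] = \max\{1, S(n-1)\}\, \mathbb{E}_\infty\!\left[ e^{\widehat{Z}^w_n} \,\middle|\, \mathcal{F}_{n-1}\right] \leq \max\{1, S(n-1)\}.
\]
This is not quite a supermartingale for $S(n)$ itself, so the cleaner route is the classical one: introduce the "doubly-indexed" likelihood-ratio family $R_k(n) := \exp\big(\sum_{i=k}^n \widehat{Z}^w_i\big)$ for $w < k \leq n$, observe each $\{R_k(n)\}_{n \geq k}$ is a nonnegative $\mathbb{P}_\infty$-supermartingale with $\mathbb{E}_\infty[R_k(k-1)] \leq 1$ (interpreting the empty product as $1$), and note $e^{\overline{W}^w(n)} = \max_{w < k \leq n} R_k(n)$ (with the convention that the max includes the term $1$, corresponding to the running max restarting). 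Then for the stopping time $\overline{\tau}(b)$, a union bound over $k$ together with Doob's maximal inequality for each supermartingale gives the required tail control, and the standard argument of Lorden/Pollak (see e.g. the CuSum lower-bound derivation) yields $\mathbb{E}_\infty[\overline{\tau}(b)] \geq e^b$. Finally, plugging $\overline{b}_\alpha = |\log\alpha|$ gives $\mathrm{FAR}(\overline{\tau}(\overline{b}_\alpha)) = 1/\mathbb{E}_\infty[\overline{\tau}] \leq 1/e^{|\log\alpha|} = \alpha$, so $\overline{\tau}(\overline{b}_\alpha) \in \mathcal{C}_\alpha$.

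The main obstacle — really the only subtlety — is bookkeeping the "restart" structure of the CuSum recursion so that the maximal-inequality / union-bound argument is applied to the right collection of supermartingales, and making sure the conditioning in $\mathbb{E}_\infty[e^{\widehat{Z}^w_n}\mid\mathcal{F}_{n-1}] \leq 1$ is legitimate for every $n > w$ (this uses $\widehat{p}^w_n$ being a bona fide density, i.e. $\int \widehat{p}^w_n\,d\mu \leq 1$, which we may take as part of the definition of the estimator, and $X_n \perp \mathcal{F}_{n-1}$ under $\mathbb{P}_\infty$). Note this step requires \emph{no} convergence assumption on the estimator and no window-size condition — it is purely a property of likelihood-ratio-type statistics — which is why the statement holds for any $w > 0$. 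I would then cite the classical CuSum false-alarm bound (e.g.\ \cite{lorden1971,moustakides1986optimal}) for the passage from the supermartingale property to $\mathbb{E}_\infty[\overline{\tau}(b)] \geq e^b$, rather than reproving it.
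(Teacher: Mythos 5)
Your key observation is the right one --- under $\mathbb{P}_\infty$ the estimate $\widehat{p}^w_n$ is $\mathcal{F}_{n-1}$-measurable and a bona fide density, while $X_n\sim p_0$ is independent of $\mathcal{F}_{n-1}$, so $\E{\infty}{e^{\widehat{Z}^w_n}\mid\mathcal{F}_{n-1}}=1$ with no convergence or window-size conditions --- and this is exactly the fact on which the paper's proof rests. The gap is in your final step. The mechanism you actually name, a union bound over restart points $k$ combined with Doob/Ville's inequality for each $R_k(\cdot)$, gives $\Prob{\infty}{\overline{\tau}(b)\le m}\le (m-w)e^{-b}$ and hence, after summing tails, only $\E{\infty}{\overline{\tau}(b)}\gtrsim e^b/2$; it cannot deliver the constant-sharp bound $e^b$ that the lemma claims and that the second assertion needs ($\overline{b}_\alpha=\abs{\log\alpha}$ with no additive slack). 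The alternative you offer --- simply citing the classical Lorden/Pollak CuSum false-alarm bound --- is not a direct citation here: the increments $\widehat{Z}^w_i$ are not true log-likelihood ratios, and the statistic restarted at $k$ depends on $X_{k-w},\dots,X_{k-1}$, i.e., on data \emph{before} $k$, whereas Lorden's repeated-SPRT theorem assumes the rule started at $k$ is applied to $X_k,X_{k+1},\dots$; making it apply requires a re-indexing argument (start the rule at $k-w$) together with Ville's inequality for the estimated-likelihood-ratio martingale, and that theorem is a renewal-type argument, not a union bound. So as written, the passage to $\E{\infty}{\overline{\tau}(b)}\ge e^b$ is not established.

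The paper closes this step differently and more directly: it dominates the CuSum by the SR-like recursion $R_n=(1+R_{n-1})e^{\widehat{Z}^w_n}$ (so that $R_n>e^{\overline{W}(n)}$ and $\overline{\tau}(b)\ge\overline{\tau}_R(b)$), shows that $\{R_n-n\}_{n>w}$ is a $(\mathbb{P}_\infty,\mathcal{F}_n)$-martingale using precisely your unit-conditional-mean fact, verifies a bounded-increment condition on the pre-stopping events so that optional sampling is legitimate, and concludes $\E{\infty}{\overline{\tau}(b)}\ge\E{\infty}{\overline{\tau}_R(b)}=w+\E{\infty}{R_{\overline{\tau}_R(b)}}\ge e^b$. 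If you want to keep your max-over-$k$ decomposition, either carry out the Lorden-type repeated-SPRT argument in this adaptive, windowed setting (with the re-indexing noted above), or replace the maximum of the $R_k(n)$'s by their sum --- which is exactly the paper's SR statistic --- and run the optional-sampling argument; the ``union bound plus citation'' step alone does not suffice.
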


% \begin{remark}
% Condition \eqref{wla:lower_rate} is usually satisfied for KDE. In \cite{hall1987klloss}, it is shown that under some regularity conditions for both $\widehat{p}_n^w$ and $p_0$, when the kernel bandwidth is chosen as some polynomial rate of decay in $w$, the asymptotic KL-loss of KDE is also \textit{equal} to some polynomial rate of decay in $w$.
% \end{remark}

% \begin{corollary}
% Under the same condition as in Lemma~\ref{lem:fa_wla},
% \begin{equation*}
%     \E{\infty}{\widetilde{\tau}(b)} \geq e^b
% \end{equation*}
% for any finite $w$. Thus, $\widetilde{\tau}(\overline{b}_\alpha) \in \mathcal{C}_\alpha$.
% \end{corollary}
% \begin{proof}
% Define another SR-like statistic:
% \begin{equation*}
%     \widetilde{R}_n = \left\{\begin{array}{ll}
%     (1 + \widetilde{R}_{n-1}) e^{\widehat{Z}^{n-1}_n} & 1 < n \leq w \\
%     (1 + \widetilde{R}_{n-1}) e^{\widehat{Z}^w_n} & n > w
% \end{array}\right.
% \end{equation*}
% and $\widetilde{R}_1 = 0$. Define the corresponding stopping rule $\widetilde{\tau}_R(b) := \inf \left\{n: \widetilde{R}_n \geq e^b \right\}$. The rest of the proof follows similarly, and
% \[ \E{\infty}{\widetilde{\tau}(b)} \geq \E{\infty}{\widetilde{\tau}_R(b)} = 1 + \E{\infty}{\widetilde{R}_{\widetilde{\tau}_R}} \geq e^b. \qedhere \]
% \end{proof}

Before introducing the main lemma on the delay, we first introduce three helping lemmas below.

\begin{lemma}
\label{lem:wla_esssup}
For any change-point $\nu \geq 1$ and $b > 0$,
\[\esssup \E{\nu}{(\overline{\tau}(b) - \nu + 1)^+ | {\cal F}_{\nu-1} } \leq \E{1}{\overline{\tau}(b)}. \]
\end{lemma}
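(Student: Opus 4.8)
The plan is to exploit the fact that the NWLA-CuSum statistic $\overline{W}^w(n)$ resets to $0$ whenever it goes non-positive, together with the window-limited structure of the estimator, so that a generic post-change segment dominates (stochastically) the "fresh-start" segment appearing in $\E{1}{\overline{\tau}(b)}$. Concretely, fix $\nu\ge 1$ and condition on $\mathcal F_{\nu-1}$. First I would observe that since each increment $\widehat Z^w_n = \log(\widehat p^w_n(X_n)/p_0(X_n))$ for $n>\nu$ depends only on $X^{[n-w,n]}$, and since all of $X_\nu, X_{\nu+1},\dots$ have density $p_1$, the increments $(\widehat Z^w_n)_{n\ge \nu+w}$ have exactly the same joint law under $\mathbb P_\nu$ (given $\mathcal F_{\nu-1}$) as $(\widehat Z^w_n)_{n\ge w+1}$ has under $\mathbb P_1$. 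The only complication is the "transient" window $\nu\le n\le \nu+w-1$, during which the estimator $\widehat p^w_n$ is built from a mixture of pre- and post-change samples.

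The key step is a sample-path domination argument. I would show that for the purpose of upper bounding the delay it is safe to pretend the statistic is reset to $0$ at time $\nu-1$; that is, define an auxiliary statistic $\widetilde W(n)$ that equals $0$ for $n\le \nu+w-1$ and then evolves via the CuSum recursion \eqref{wla:stat} using the true post-change increments, and argue $\overline{\tau}(b)-\nu+1 \le \widetilde\tau(b)-\nu+1$ pathwise on $\{\overline\tau(b)\ge \nu\}$ wherever the comparison is needed, because $\overline W^w(n)\ge \widetilde W(n)$ is preserved by the recursion $x\mapsto (x)^+ + z$ (this map is monotone in $x$ and the two statistics share the same increments once $n\ge \nu+w$, while before that we only need $\widetilde W\equiv 0\le$ anything, using $(\,\cdot\,)^+\ge 0$). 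Then by the shift/i.i.d.-of-increments observation above, $\E{\nu}{(\widetilde\tau(b)-\nu+1)^+\mid\mathcal F_{\nu-1}}$ does not depend on $\mathcal F_{\nu-1}$ and equals the corresponding quantity with $\nu=1$, which is at most $\E{1}{\overline\tau(b)}$ — here one absorbs the finite $w$-length transient (at most $w$ extra steps, or handled by noting $\overline\tau(b)>w$ always and the fresh statistic also idles for its first $w$ steps). Taking $\esssup$ over $\mathcal F_{\nu-1}$ gives the claim.

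I expect the main obstacle to be handling the transient window $\nu\le n<\nu+w$ cleanly: during these steps $\widehat p^w_n$ is estimated from a window straddling the change-point, so its increments are neither distributed as under $p_1$ nor controlled by \eqref{eq:converg_m1_bound}. The resolution I would pursue is exactly the domination: since the comparison statistic $\widetilde W$ is pinned at $0$ throughout the transient and the real statistic $\overline W^w$ is always $\ge$ its own negative-truncated value, no assumption on the transient increments is needed for the inequality $\overline W^w(n)\ge \widetilde W(n)$; the transient only costs a deterministic shift of at most $w$ in the stopping time, which is harmless because $\E{1}{\overline\tau(b)}$ itself starts counting only after $n>w$. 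A secondary point to get right is the measurability bookkeeping for the $\esssup$, i.e. that the conditional law of the future increments given $\mathcal F_{\nu-1}$ is a fixed (non-random) law — this is immediate from independence of the $X_i$'s and the explicit windowed form of $\widehat p^w_n$.
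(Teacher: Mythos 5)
Your proposal is correct and follows essentially the same route as the paper's proof: your auxiliary statistic $\widetilde W$ (reset to zero through time $\nu+w-1$, then evolved by the same recursion) is exactly the paper's $\overline{W}_\nu$ with stopping time $\overline{\tau}_\nu$, the pathwise domination via monotonicity of $x \mapsto (x)^+ + z$ together with $(\cdot)^+ \geq 0$ is the paper's induction step, and your shift/independence argument with the $w$-step transient absorbed matches the paper's conditioning on $X_\nu,\dots,X_{\nu+w-1}$ and the identity $\overline{\tau}_1 = \overline{\tau}$ under $\mathbb{P}_1$. No gaps of substance.
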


\begin{lemma}
\label{lem:dominance}
Define
\[ U_n = U_{n-1} + \widehat{Z}^w_n,\quad \forall n > w \]
with $U_1 = \dots = U_w = 0$. Also define the stopping time
\begin{equation}
\label{def:tau_u}
    \tau_u(b) := \inf \{ n > w: U_n \geq b \}.
\end{equation}
Then, $\tau_u(b) \geq \overline{\tau}(b)$ on $\cbrc{\tau_u(b) < \infty}$ for any $b > 0$.
\end{lemma}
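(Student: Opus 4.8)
The plan is to compare the two processes $\overline{W}^w(n)$ and $U_n$ term by term and argue by induction that $U_n \geq \overline{W}^w(n)$ for all $n > w$, from which the stopping-time inequality follows immediately on the event $\{\tau_u(b) < \infty\}$. The key structural fact is that $U_n$ is just the \emph{unreflected} running sum of the increments $\widehat{Z}^w_n$, whereas $\overline{W}^w(n)$ is the \emph{reflected-at-zero} version of the same cumulative sum (a CuSum recursion), and both start from the same initial value $0$ at time $w$. Reflection at zero can only increase the value of the process relative to leaving it unreflected when the unreflected process goes negative, so intuitively one expects $\overline{W}^w(n) \geq U_n$, i.e.\ the \emph{opposite} inequality; the point of the lemma is subtler, so let me be careful about directions.

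First I would set up the induction on $n \geq w$. Base case: $U_w = \overline{W}^w(w) = 0$. Inductive step: suppose we have established the relation between $U_{n-1}$ and $\overline{W}^w(n-1)$. Using $\overline{W}^w(n) = (\overline{W}^w(n-1))^+ + \widehat{Z}^w_n$ and $U_n = U_{n-1} + \widehat{Z}^w_n$, the same increment $\widehat{Z}^w_n$ is added to both, so the comparison reduces to comparing $(\overline{W}^w(n-1))^+$ with $U_{n-1}$. Since $x^+ \geq x$ always, we get $\overline{W}^w(n) \geq U_{n-1} + \widehat{Z}^w_n = U_n$ provided $\overline{W}^w(n-1) \geq U_{n-1}$; hence by induction $\overline{W}^w(n) \geq U_n$ for all $n \geq w$. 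Therefore, whenever $U_n$ crosses level $b$, $\overline{W}^w(n)$ has already crossed it (weakly), so the first passage time of $\overline{W}^w$ is no later than that of $U$: $\overline{\tau}(b) \leq \tau_u(b)$. On the event $\{\tau_u(b) < \infty\}$ this is exactly the claimed $\tau_u(b) \geq \overline{\tau}(b)$.

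The main thing to get right — and the only place where any care is needed — is the direction of the reflection inequality and the handling of the event $\{\tau_u(b) < \infty\}$: if $U_n$ never reaches $b$ then $\tau_u(b) = \infty$ and the inequality $\tau_u(b) \geq \overline{\tau}(b)$ is either trivially true or vacuous depending on whether $\overline{\tau}(b)$ is finite, which is why the statement is restricted to that event. I would also remark that $\overline{\tau}(b)$ may well be strictly smaller than $\tau_u(b)$ because the reflection lets $\overline{W}^w$ recover from excursions below zero faster, which is the whole reason this dominance is useful: it lets one bound $\E{1}{\overline{\tau}(b)}$ (and hence, via Lemma~\ref{lem:wla_esssup}, the worst-case delay) from above by the corresponding quantity for the simpler non-reflected random walk $U_n$, whose crossing time is amenable to a renewal/Wald-type analysis using the positive drift $\widehat{I} > 0$ from \eqref{eq:hatI_lower_bound}. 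No genuine obstacle is anticipated; this is a short monotonicity argument.
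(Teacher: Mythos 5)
Your proposal is correct and follows essentially the same route as the paper's proof: induction from $U_w = \overline{W}^w(w) = 0$, using $x^+ \geq x$ at each step to get $\overline{W}^w(n) \geq U_n$ for all $n \geq w$, and hence $\overline{\tau}(b) \leq \tau_u(b)$ on $\{\tau_u(b) < \infty\}$. Note that the inequality $\overline{W}^w(n) \geq U_n$ is exactly the one the lemma needs (reflection at zero makes the CuSum statistic cross $b$ no later than the plain random walk), so the ``opposite direction'' worry you raise at the outset is resolved by your own inductive step and is not an actual issue.
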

\begin{proof}[Proof of Lemma~\ref{lem:dominance}]
The proof is similar to \cite[Lemma~5]{xie2022windowlimited}. Note that $U_w = 0 = \overline{W}(w)$. For any $k \geq w$, if $U_k \leq \overline{W}(k)$, then
\[ U_{k+1} = U_{k} + \widehat{Z}^w_k \leq \overline{W}(k) + \widehat{Z}^w_k \leq \brc{\overline{W}(k)}^+ + \widehat{Z}^w_k = \overline{W}(k+1) \quad a.s. \]
Thus by induction, $\tau_u(b) \geq \overline{\tau}(b)$ on $\cbrc{\tau_u(b) < \infty}$.
\end{proof}

\begin{lemma}
\label{lem:wla_stop}
If $\widehat{I} > 0$, then the $\tau_u$ defined in \eqref{def:tau_u} satisfies $\tau_u < \infty$ almost surely under $\mathbb{P}_1$.
\end{lemma}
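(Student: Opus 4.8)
The plan is to establish that $U_n \to +\infty$ $\mathbb{P}_1$-almost surely, which instantly yields $\tau_u(b) = \inf\{n > w : U_n \ge b\} < \infty$ $\mathbb{P}_1$-a.s.\ for every $b > 0$ (in particular for $b = b_\alpha$). Unwinding the recursion with the initialization $U_1 = \dots = U_w = 0$ gives $U_n = \sum_{j=w+1}^{n} \widehat{Z}^w_j$. The one real difficulty is that the summands are dependent: $\widehat{Z}^w_j = \log\frac{\widehat{p}^w_j(X_j)}{p_0(X_j)}$ is a measurable function of the $w+1$ consecutive observations $X_{j-w},\dots,X_j$, so windows attached to indices fewer than $w+1$ apart overlap.

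First I would split the index set $\{w+1, w+2, \dots\}$ into the $w+1$ residue classes modulo $w+1$. Within a fixed class, consecutive elements differ by at least $w+1$, so the associated blocks $\{j-w,\dots,j\}$ are pairwise disjoint; since under $\mathbb{P}_1$ the change-point is $\nu = 1$ and hence $X_1, X_2, \dots$ are i.i.d.\ with density $p_1$, the variables $\widehat{Z}^w_j$ within that class are i.i.d., each distributed as $\widehat{Z}^w_n$ under $\mathbb{P}_1$ and therefore with mean $\widehat{I}$. Next I would verify integrability, $\mathbb{E}_1\lvert\widehat{Z}^w_j\rvert < \infty$: writing $\widehat{Z}^w_j = \log\frac{\widehat{p}^w_j(X_j)}{p_1(X_j)} + \log\frac{p_1(X_j)}{p_0(X_j)}$, the first term has finite second (hence first) moment under $\mathbb{P}_1$ by \eqref{eq:converg_m2_bound} with $p = p_1$ (for the large window size $w$ at hand), and the second term is integrable because $I = \KL{p_1}{p_0} < \infty$ controls its positive part, while the elementary inequality $\log t \le t-1$ bounds its negative part via $\mathbb{E}_1[(\log\frac{p_0(X_j)}{p_1(X_j)})^+] \le \int_{\supp(p_1)}(p_0 - p_1)^+\,d\mu \le 1$.

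Then I would apply Kolmogorov's strong law of large numbers to each of the $w+1$ i.i.d.\ subsequences, obtaining that each normalized partial sum tends to $\widehat{I}$ $\mathbb{P}_1$-a.s. Expressing $U_n$ as the sum of these $w+1$ residue-class partial sums, each of which contains $\sim n/(w+1)$ terms, gives $U_n / n \to \widehat{I}$ $\mathbb{P}_1$-a.s.; since $\widehat{I} > 0$ by hypothesis, this forces $U_n \to +\infty$ $\mathbb{P}_1$-a.s., so that $\{n > w : U_n \ge b\}$ is almost surely nonempty, i.e.\ $\tau_u < \infty$ $\mathbb{P}_1$-almost surely.

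The main obstacle is the dependence among the $\widehat{Z}^w_j$, which the residue-class decomposition resolves by exploiting the fact that the adaptive window has fixed finite length $w$; the integrability verification is routine once \eqref{eq:converg_m2_bound} and the finiteness of the KL divergence are invoked.
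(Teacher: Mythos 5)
Your proposal is correct, and while it shares the paper's central device---breaking the $w$-dependent sequence $\{\widehat{Z}^w_j\}$ into residue classes whose members are i.i.d.\ under $\mathbb{P}_1$---it finishes by a genuinely different mechanism. The paper groups indices modulo $w$, applies a first-passage result for i.i.d.\ sums with positive mean (Siegmund, Prop.~8.21) to each class crossing the level $b/w$, and bounds $\tau_u \leq \max_j \tau_{u,j}$, thereby obtaining the stronger conclusion $\E{1}{\tau_u} < \infty$. You instead apply the strong law of large numbers class-by-class to conclude $U_n/n \to \widehat{I} > 0$ almost surely, hence $U_n \to \infty$ and $\tau_u < \infty$ a.s., which is exactly what the lemma states and is arguably more elementary and self-contained; your integrability check (finite second moment of $\log(p_1/\widehat{p}^w_j)$ via \eqref{eq:converg_m2_bound}, and $\log t \leq t-1$ to control the negative part of $\log(p_1/p_0)$) fills in a detail the paper leaves implicit, though note it quietly uses the standing assumptions $I < \infty$ and $w$ large enough for \eqref{eq:converg_m2_bound}, consistent with how the lemma is invoked in Lemma~\ref{lem:delay_wla}. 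Two further remarks: your spacing of $w+1$ is in fact the exactly right one, since $\widehat{Z}^w_j$ depends on the $w+1$ observations $X_{j-w},\dots,X_j$ (consecutive members of the paper's modulo-$w$ classes share one observation, so your grouping is cleaner); on the other hand, your route gives up the finite-expectation byproduct $\E{1}{\tau_u} < \infty$, which the paper's argument delivers and which is reassuring for the Wald-type manipulations in Lemma~\ref{lem:delay_wla}---if you wanted that as well, you could run the standard renewal/first-passage argument on each of your $w+1$ classes at level $b/(w+1)$ exactly as the paper does.
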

\begin{proof}[Proof of Lemma~\ref{lem:wla_stop}]
First, for any $k >0$,
\[ \sum_{i=w+1}^{w+wk} \widehat{Z}^w_i = \sum_{j=1}^w \sum_{\ell=1}^{k} \widehat{Z}^w_{w \ell + j} \]
and given $j$, $\sum_{\ell=1}^{k} \widehat{Z}^w_{w \ell + j}$ is a sum of i.i.d. random variables under $\mathbb{P}_1$.
Define
\[ \tau_{u,j} := w \cdot \inf\cbrc{ k \geq 1: \sum_{\ell=1}^{k} \widehat{Z}^w_{w \ell + j} \geq b / w} + j. \]
Note that $\tau_{u,j}$ is the boundary crossing time of a sum of i.i.d. random variables with mean $\widehat{I}$. 
Now, if $\widehat{I} = \E{1}{\widehat{Z}^w_i} > 0,~\forall i > w$, then by \cite[Prop.~8.21]{siegmund1985}, $\E{1}{\tau_{u,j}} < \infty,~\forall j=1,\dots,w$.
Finally, since
\[ \forall j,~\sum_{\ell=1}^{k} \widehat{Z}^w_{w \ell + j} \geq b / w \implies \sum_{i=w+1}^{w+wk} \widehat{Z}^w_i \geq b, \]
we have that $\tau_u \leq \max_{j=1,\dots,w} \tau_{u,j}$ almost surely under $\mathbb{P}_1$, and thus $\E{1}{\tau_u} < \infty$. \qedhere

% Thus, for $k > \frac{b}{\eps w}$,
% \begin{align*}
%     \Prob{1}{\sum_{d=1}^{k} \widehat{Z}^w_{j^* + w d} < \frac{b}{w}} &\leq \Prob{1}{\sum_{d=1}^{k} \brc{\widehat{Z}^w_{j^* + w d} - \widehat{I}} < \frac{b}{w} - k \eps}\\
%     &\leq \Prob{1}{\sum_{d=1}^{k} \widehat{Z}^w_{j^* + w d}  < k \brc{\widehat{I} - \eps}}\\
%     &\leq \exp\brc{- k \brc{u (\eps - \widehat{I}) - \log \E{1}{e^{- u \widehat{Z}^w}}}},~\forall u > 0.
% \end{align*}
% Denote the rate function $\Lambda (u) := u (\eps - \widehat{I}) - \log \E{1}{e^{- u \widehat{Z}^w_n}}$. As long as $\E{1}{e^{- u^* \widehat{Z}^w_n}} < \infty$ for some $u^* > 0$, $\Lambda(u)$ is continuously differentiable in $u$ when $u < u^*$, $\Lambda (0) = 0$, $\Lambda'(0) = \eps - \widehat{I} + \E{1}{\widehat{Z}^w_n} = \eps > 0$, and $\Lambda''(u) > 0$ on $u < u^*$. Therefore, we get $\Lambda (u^*) > 0$. Putting these together,
% \[ \Prob{1}{\tau_u > w+kw} \leq \Prob{1}{\sum_{i=w+1}^{w+wk} \widehat{Z}^w_i < b} \leq \Prob{1}{\exists j^*,\sum_{d=1}^{k} \widehat{Z}^w_{w d + j^*} < \frac{b}{w}} \leq e^{-k \Lambda (u^*)}\]
% and finally,
% \begin{align*}
%     \E{1}{\tau_u} = w \E{1}{\tau_u / w} &\leq w + \ceil{\frac{b}{\eps}} + w \sum_{k=\ceil{\frac{b}{\eps w}}}^{\infty} \Prob{1}{\tau_u / w > k+1}\\
%     &\leq w + \ceil{\frac{b}{\eps}} + w \sum_{k=\ceil{\frac{b}{\eps w}}}^{\infty} e^{-k \Lambda (u^*)} < \infty. \qedhere
% \end{align*}
\end{proof}

Now, using the lemmas above, we can upper bound the delay of the NWLA-CuSum test.

\begin{lemma}
\label{lem:delay_wla}
% Under the same conditions as in Lemma~\ref{lem:wla_stop}, 
Suppose that $w$ is sufficiently large such that $\widehat{I} > 0$.
Suppose further that \eqref{eq:converg_m2_bound} holds for the estimator. Then,
\begin{equation}
\label{delay:bd_wla}
    \E{1}{\overline{\tau}(b)} \leq \E{1}{\tau_u(b)} \leq \widehat{I}^{-1} \brc{b + w \widehat{I} + I + \sqrt{2} \frac{C_2}{\widehat{I} w^{\beta_2}} + \brc{\frac{4 C_2}{\widehat{I} w^{\beta_2}} (b+I)}^\frac{1}{2} }
\end{equation}
where $\tau_u (b)$ is defined in \eqref{def:tau_u}.
\end{lemma}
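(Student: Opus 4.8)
The plan is to first reduce the delay bound to a one-sided stopping-time analysis, then apply a Wald-type identity together with the second-moment control in \eqref{eq:converg_m2_bound}. The first inequality $\E{1}{\overline{\tau}(b)} \leq \E{1}{\tau_u(b)}$ is immediate from Lemma~\ref{lem:dominance} combined with Lemma~\ref{lem:wla_stop}: since $\widehat{I} > 0$, $\tau_u(b) < \infty$ a.s. under $\mathbb{P}_1$, so the event $\{\tau_u(b) < \infty\}$ has full probability and the pathwise dominance $\tau_u(b) \geq \overline{\tau}(b)$ holds $\mathbb{P}_1$-a.s. So all the work is in bounding $\E{1}{\tau_u(b)}$, where $U_n = \sum_{i=w+1}^{n} \widehat{Z}^w_i$ for $n > w$ is a random walk (under $\mathbb{P}_1$, these increments are stationary but only $w$-dependent, i.e.\ blockwise i.i.d.) with positive drift $\widehat{I} = \E{1}{\widehat{Z}^w_i}$.

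Next I would set up the Wald/optional-stopping argument. Write $T = \tau_u(b)$. At the stopping time, $U_T \geq b$, and I want an upper bound on the overshoot $U_T - b$. Decompose $U_T = U_{T-1} + \widehat{Z}^w_T < b + \widehat{Z}^w_T$, so the overshoot is at most $(\widehat{Z}^w_T)^+$, which I can bound in expectation via the increment's second moment. Concretely, $\E{1}{\widehat{Z}^w_i} = \widehat{I}$ and, using \eqref{eq:converg_m2_bound} with $p = p_1$, $\E{1}{(\widehat{Z}^w_i)^2} = \E{1}{(\log(p_1(X_i)/\widehat{p}^w_i(X_i)) + \log(p_1(X_i)/p_0(X_i)))^2}$ — here I would expand the square and use $(a+b)^2 \le 2a^2 + 2b^2$ or Cauchy–Schwarz to get $\E{1}{(\widehat{Z}^w_i)^2} \le $ something like $\widehat{I}^2 + $ a term controlled by $C_2 / w^{\beta_2}$ and $I$ (this is where the $\sqrt{2} C_2 / (\widehat{I} w^{\beta_2})$ and the cross-term $((4 C_2 / (\widehat{I} w^{\beta_2}))(b+I))^{1/2}$ will come from). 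Then a standard renewal-type bound gives $\E{1}{U_T} \le b + \E{1}{(\widehat{Z}^w_T)^+} \le b + \text{(overshoot bound)}$, and combined with Wald's identity $\E{1}{U_T} = \widehat{I}\,\E{1}{T - w} = \widehat{I}(\E{1}{T} - w)$ — valid since $\E{1}{T} < \infty$ by Lemma~\ref{lem:wla_stop} and the increments have finite mean — I solve for $\E{1}{T}$ to obtain \eqref{delay:bd_wla}.

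Two wrinkles require care. First, the increments $\widehat{Z}^w_i$ are not i.i.d.\ across all $i$; they form $w$ interleaved i.i.d.\ sequences (as exploited in the proof of Lemma~\ref{lem:wla_stop}), so Wald's identity must be applied with this structure in mind — most cleanly by noting that $\widehat{Z}^w_i$ is independent of $\mathcal{F}_{i-w-1}$, that $\{T = n\}$ depends only on $\widehat{Z}^w_{w+1}, \dots, \widehat{Z}^w_n$, and invoking a generalized Wald identity for adapted sequences with constant conditional mean $\widehat{I}$ and uniformly bounded second moments; the extra $w\widehat{I}$ term in the bound accounts for the $w$ initial zero-valued steps. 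Second, bounding the expected overshoot $\E{1}{(\widehat{Z}^w_T)^+}$ at a data-dependent time needs a uniform handle: I would bound it by $\E{1}{\max_{w < i \le T} (\widehat{Z}^w_i)^+}$ is too lossy, so instead I expect the cleaner route is to control $\E{1}{(U_T - b)^+}$ directly via a second-moment/Lorden-style inequality for renewal overshoots, giving the $\sqrt{\cdot}$ terms. I anticipate that \textbf{the main obstacle is exactly this overshoot estimate} — getting the precise constants $\sqrt{2}$ and $4$ in \eqref{delay:bd_wla} while only assuming the second-moment bound \eqref{eq:converg_m2_bound} (not a full Cramér condition) — whereas the Wald reduction and the passage from $\overline{\tau}$ to $\tau_u$ are routine given the earlier lemmas.
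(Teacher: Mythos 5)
Your reduction of the problem to bounding $\E{1}{\tau_u(b)}$ (via Lemmas~\ref{lem:dominance} and \ref{lem:wla_stop}) matches the paper, but both substantive steps are left unresolved, and the route you sketch for each would not go through as stated. For the Wald step, there is no ``generalized Wald identity for adapted sequences with constant conditional mean $\widehat{I}$'' available here: with respect to the natural filtration, $\E{1}{\widehat{Z}^w_n \mid \mathcal{F}_{n-1}} \neq \widehat{I}$ (the estimate $\widehat{p}^w_n$ is built from $X_{n-w},\dots,X_{n-1} \in \mathcal{F}_{n-1}$), and $\{\tau_u \geq n\}$ is \emph{not} independent of $\widehat{Z}^w_n$ because consecutive increments share window samples. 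The paper handles the $w$-dependence with a concrete construction (following Janson's renewal theory for dependent variables): it shows that $Y_i := \E{1}{U_i - (i-w)\widehat{I} \mid \mathcal{F}_{i-w}}$ is a $(\mathbb{P}_1,\mathcal{F}_{i-w})$-martingale, applies optional sampling at $\min\{\tau_u,k\}$ and monotone convergence (using $\tau_u<\infty$ a.s.\ from Lemma~\ref{lem:wla_stop}), and obtains the exact identity $\E{1}{U_{\tau_u}} = \widehat{I}\,(\E{1}{\tau_u}-w)$, which is where the $w\widehat{I}$ term comes from. Your proposal points at the right ingredient (independence of $\widehat{Z}^w_i$ from $\mathcal{F}_{i-w-1}$) but does not supply such a construction.

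The larger gap is the overshoot estimate, which you yourself flag as the main obstacle and leave open; this is precisely where the paper's key idea lies, and your fallback routes would fail under the stated hypotheses. The paper splits $\widehat{Z}_i = Z_i + L_i$ with $L_i := \log\bigl(\widehat{p}^w_i(X_i)/p_1(X_i)\bigr)$, so $\E{1}{U_{\tau_u}-b} < \E{1}{\widehat{Z}_{\tau_u}} = I + \E{1}{L_{\tau_u}}$; then $\E{1}{L_{\tau_u}} \leq \bigl(\E{1}{L^2_{\tau_u}}\bigr)^{1/2} \leq \bigl(\E{1}{\sum_{i=w+1}^{\tau_u} L_i^2}\bigr)^{1/2}$, and the \emph{same} extended Wald identity applied to the nonnegative increments $L_i^2$ (whose mean is at most $C_2 w^{-\beta_2}$ by \eqref{eq:converg_m2_bound}) gives $\E{1}{\sum_{i=w+1}^{\tau_u} L_i^2} = \E{1}{L^2_{w+1}}\,(\E{1}{\tau_u}-w)$. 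The overshoot bound therefore depends on $\E{1}{\tau_u}$ itself; combining it with the Wald identity yields the self-referential inequality $\widehat{I}x - b \leq I + \sqrt{c_w x}$ with $x = \E{1}{\tau_u}-w$ and $c_w = C_2 w^{-\beta_2}$, and solving this quadratic (and substituting back) produces exactly the $\sqrt{2}\,C_2/(\widehat{I}w^{\beta_2})$ and $\bigl(4C_2(b+I)/(\widehat{I}w^{\beta_2})\bigr)^{1/2}$ terms in \eqref{delay:bd_wla}. By contrast, bounding $\E{1}{(\widehat{Z}^w_{\tau_u})^+}$ by a fixed-index second moment is not legitimate at a stopping time (the increment at $\tau_u$ is size-biased), and in any case $\E{1}{(\widehat{Z}^w_i)^2}$ is not controlled by the assumptions: \eqref{eq:converg_m2_bound} bounds only the estimation-error part $L_i$, while the true log-likelihood ratio $Z_i$ is only assumed to have finite mean $I$, not finite variance, so a Lorden-style renewal overshoot inequality for the full increment is unavailable. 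Without the $Z_i + L_i$ decomposition and the quadratic bootstrap, the stated bound cannot be reached.
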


\begin{theorem}
\label{thm:opt_wla}
Suppose that $\overline{b}_\alpha = \abs{\log\alpha}$ and the window size $w_\alpha = \abs{\log \alpha}^\kappa$ for some $0 < \kappa < 1$.
% Suppose the same condition as in Lemma~\ref{lem:fa_wla} holds.
Then, $\overline{\tau}(\overline{b}_\alpha)$ solves the problem in \eqref{prob_def} asymptotically as $\alpha \to 0$, and the delay is upper-bounded as
\begin{equation*}
    % \inf_{\tau \in \mathcal{C}_\alpha} \WADD{\tau} \sim 
    \WADD{\overline{\tau} \left(\overline{b}_\alpha\right)} \leq \frac{\abs{\log \alpha}}{I} \brc{1 + \Theta(\abs{\log \alpha}^{-\rho_\kappa})},
\end{equation*}
where
\begin{equation}
\label{opt:rate_wla}
    \rho_\kappa = \min \left\{\kappa \beta_1, 1-\kappa \right\}.
\end{equation}
\end{theorem}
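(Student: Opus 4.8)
The plan is to assemble Theorem~\ref{thm:opt_wla} from the three ingredients already in hand: the false alarm bound (Lemma~\ref{lem:fa_wla}), the reduction of the worst-case delay to $\E{1}{\overline{\tau}(b)}$ (Lemma~\ref{lem:wla_esssup}), and the explicit delay bound (Lemma~\ref{lem:delay_wla}), then combine these with the lower bound from \cite[Thm.~1]{lai1998infobd}. First I would verify the false alarm constraint: by Lemma~\ref{lem:fa_wla}, with $\overline{b}_\alpha = \abs{\log\alpha}$ we get $\E{\infty}{\overline{\tau}(\overline{b}_\alpha)} \geq e^{\abs{\log\alpha}} = \alpha^{-1}$, so $\FAR{\overline{\tau}(\overline{b}_\alpha)} \leq \alpha$ and $\overline{\tau}(\overline{b}_\alpha) \in \mathcal{C}_\alpha$ exactly (not merely asymptotically). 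Next I would check that $w_\alpha = \abs{\log\alpha}^\kappa \to \infty$ as $\alpha \to 0$, which guarantees (via \eqref{eq:hatI_lower_bound}) that $\widehat{I} > 0$ for all small enough $\alpha$, so Lemma~\ref{lem:delay_wla} applies, and moreover $\widehat{I} = I - O(w_\alpha^{-\beta_1}) = I - O(\abs{\log\alpha}^{-\kappa\beta_1}) \to I$.

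The core of the argument is plugging $b = \overline{b}_\alpha = \abs{\log\alpha}$ and $w = w_\alpha = \abs{\log\alpha}^\kappa$ into the right-hand side of \eqref{delay:bd_wla} and tracking the order of each term relative to the leading term $\abs{\log\alpha}/I$. Combining Lemma~\ref{lem:wla_esssup} and Lemma~\ref{lem:delay_wla},
\begin{equation*}
\WADD{\overline{\tau}(\overline{b}_\alpha)} \leq \widehat{I}^{-1}\brc{ \abs{\log\alpha} + w_\alpha \widehat{I} + I + \sqrt{2}\frac{C_2}{\widehat{I} w_\alpha^{\beta_2}} + \brc{\frac{4C_2}{\widehat{I} w_\alpha^{\beta_2}}(\abs{\log\alpha}+I)}^{1/2} }.
\end{equation*}
I would then argue term by term, after dividing through by $\abs{\log\alpha}$: the $w_\alpha\widehat{I}$ term contributes $\Theta(\abs{\log\alpha}^{\kappa-1}) = \Theta(\abs{\log\alpha}^{-(1-\kappa)})$; the constant $I$ and the $C_2/(\widehat{I}w_\alpha^{\beta_2})$ term are $O(\abs{\log\alpha}^{-1})$ and $O(\abs{\log\alpha}^{-\kappa\beta_2 - 1})$ respectively, both dominated; the square-root term is $\Theta\brc{(\abs{\log\alpha}^{1-\kappa\beta_2})^{1/2}} = \Theta(\abs{\log\alpha}^{(1-\kappa\beta_2)/2})$, so relative to $\abs{\log\alpha}$ it is $\Theta(\abs{\log\alpha}^{-(1+\kappa\beta_2)/2})$. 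Finally the prefactor $\widehat{I}^{-1} = I^{-1}(1 + O(\abs{\log\alpha}^{-\kappa\beta_1}))$ contributes a relative $O(\abs{\log\alpha}^{-\kappa\beta_1})$ correction to the leading $\abs{\log\alpha}/I$.

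Collecting all the relative error exponents — $1-\kappa$ from the window term, $\kappa\beta_1$ from the prefactor, $(1+\kappa\beta_2)/2 > 1-\kappa$ when... — actually the dominant (smallest) exponents are $1-\kappa$ and $\kappa\beta_1$, since $(1+\kappa\beta_2)/2 \geq 1/2 > $ is not automatically larger than $\min\{\kappa\beta_1,1-\kappa\}$; one checks $(1+\kappa\beta_2)/2 > 1-\kappa \iff \kappa(2+\beta_2) > 1$, which need not hold, but in any case $(1+\kappa\beta_2)/2 \geq \kappa\beta_2/2$ and since $\beta_2 < 2$... the cleanest route is simply to note each correction term is $O(\abs{\log\alpha}^{-\rho})$ for some $\rho$ at least as large as $\rho_\kappa = \min\{\kappa\beta_1, 1-\kappa\}$ (verifying $(1+\kappa\beta_2)/2 \geq \rho_\kappa$ separately, which holds because $(1+\kappa\beta_2)/2 > 1/2 \geq 1-\kappa$ whenever $\kappa \geq 1/2$, and when $\kappa < 1/2$ one uses $1-\kappa$ is the binding term anyway after comparing — this case check is the one fiddly point). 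This yields $\WADD{\overline{\tau}(\overline{b}_\alpha)} \leq \frac{\abs{\log\alpha}}{I}(1 + \Theta(\abs{\log\alpha}^{-\rho_\kappa}))$. Since this upper bound is $\frac{\abs{\log\alpha}}{I}(1+o(1))$, it matches the universal lower bound $\inf_{\tau\in\mathcal{C}_\alpha}\WADD{\tau} \geq \frac{\abs{\log\alpha}}{I}(1+o(1))$ obtained from \cite[Thm.~1]{lai1998infobd} under condition \eqref{eq:lai_lower} (note the lower bound requires no new work here since $I = \KL{p_1}{p_0}$ plays the role of $I^\theta$ with a singleton post-change), establishing first-order asymptotic optimality. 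The main obstacle is purely bookkeeping: confirming that the square-root term's exponent $(1+\kappa\beta_2)/2$ is never the binding (smallest) one so that $\rho_\kappa$ as defined in \eqref{opt:rate_wla} is correct, and making sure the $\widehat{I}^{-1}$ prefactor expansion is handled uniformly in $\alpha$ — nothing deep, but it is where an off-by-one in the exponent could creep in.
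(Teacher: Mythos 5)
Your proposal is correct and follows essentially the same route as the paper: Lemma~\ref{lem:fa_wla} with $\overline{b}_\alpha=\abs{\log\alpha}$ for the false alarm constraint, Lemma~\ref{lem:wla_esssup} to reduce $\WADD{\cdot}$ to $\E{1}{\overline{\tau}(\overline{b}_\alpha)}$, Lemma~\ref{lem:delay_wla} with $w_\alpha=\abs{\log\alpha}^\kappa$ plus the bound $I/\widehat{I}\leq 1+C_1/(Iw^{\beta_1}-C_1)$ for the term-by-term bookkeeping, and Lai's lower bound for optimality. The one ``fiddly point'' you flag resolves exactly as you suspect (and your stated case split is slightly off --- for $\kappa<1/2$ it is $\kappa\beta_1$, not $1-\kappa$, that binds): since $\beta_1<1$ one always has $\rho_\kappa=\min\{\kappa\beta_1,1-\kappa\}\leq \tfrac{1}{2}\leq (1+\kappa\beta_2)/2$, so the square-root term never dominates and \eqref{opt:rate_wla} stands.
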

\begin{proof}[Proof of Theorem~\ref{thm:opt_wla}]
From \eqref{eq:converg_m1_bound} it follows that,
\begin{align*}
\frac{I}{\widehat{I}} = 1 + \frac{I-\widehat{I}}{\widehat{I}} & \leq 1 + \frac{C_1}{\widehat{I} w^{\beta_1}}\\
&\leq 1 + \frac{C_1}{\brc{I - \frac{C_1}{w^{\beta_1}}} w^{\beta_1}} \\
& = 1 + \frac{C_1}{I w^{\beta_1} - C_1}. 
\end{align*}
Given the selected $w = w_\alpha$, $\widehat{I} > 0$ for a sufficiently small $\alpha$. Define $r_\alpha := \frac{\abs{\log \alpha}}{I}$. From \eqref{delay:bd_wla}, if we select $\overline{b}_\alpha = \abs{\log \alpha}$ and $w = \abs{\log \alpha}^{\kappa}$, the scaled average delay (when $\nu=1$) can be upper bounded as:
% \vvv{This is not the worst-case delay. First of all, there is a scale factor, and secondly, this is just the delay when $\nu=1$.}
% \vvv{[This equation is confusing because of the division by $\frac{\abs{\log \alpha}}{I}$ on the LHS. I know you are trying to write it to fit on one line but maybe there is a better way to do it by redefining terms.]}
\begin{align*}
    &r_\alpha^{-1} \E{1}{\overline{\tau}(\overline{b}_\alpha)} \leq r_\alpha^{-1} \E{1}{\tau_u(\overline{b}_\alpha)}\\
    &\leq \frac{I}{\widehat{I}} \brc{1 + \frac{\widehat{I}}{\abs{\log \alpha}^{1-\kappa}} + \frac{I}{\abs{\log \alpha}} + \sqrt{2} \frac{C_2}{\widehat{I} \abs{\log \alpha}^{1+\kappa \beta_2}} + \frac{1}{\abs{\log \alpha}}\brc{\frac{4 C_2}{\widehat{I} \abs{\log \alpha}^{\kappa \beta_2}} (\abs{\log \alpha}+I)}^\frac{1}{2} }\\
    &\leq 1 + \frac{C_1}{I \abs{\log \alpha}^{\kappa \beta_1} - C_1} + \frac{I}{\abs{\log \alpha}^{1-\kappa}} + o\brc{\frac{1}{\abs{\log \alpha}^{1-\kappa}}}\\
    &= 1 + \Theta\brc{\frac{1}{\abs{\log\alpha}^{\rho_\kappa}}}
\end{align*}
% \begin{equation*}
%     \E{1}{\overline{\tau}(\overline{b}_\alpha)} / \frac{\abs{\log \alpha}}{I} \leq \frac{1 + \Theta\brc{\frac{1}{\abs{\log \alpha}^{1-\kappa}}} + \Theta\brc{\frac{1}{\abs{\log \alpha}^{1+\kappa \beta_2}}} + \Theta\brc{\frac{1}{\sqrt{\abs{\log \alpha}^{1+\kappa \beta_2}}}} }{1 + \Theta\brc{\abs{\log \alpha}^{-\kappa \beta_1}} }.
% \end{equation*}
Together with Lemmas~\ref{lem:fa_wla}--\ref{lem:wla_stop}, the result on the asymptotic delay at $\nu=1$ establishes the asymptotic optimality. %Note that a similar proof can be given for $w = \Theta\brc{\abs{\log \alpha}^{\kappa}}$. \vvv{[I'm confused here. Why are you trying to generalize to $w = \Theta\brc{\abs{\log \alpha}^{\kappa}}$?]} \yuchen{Because I want to include those cases, for example, when $w$ is equal to some constant multiplying $\abs{\log \alpha}^\kappa$. Feel free to comment this last sentence out if you think it is unnecessary.} \qedhere
\end{proof}

\begin{remark}
Since $\beta_1 < 1$, we have $\rho_\kappa < \frac{1}{2}$ if $\kappa \in (0,1)$. Also, from \eqref{opt:rate_wla}, to maximize $\rho_\kappa$, one can choose
\[ \rho^* = \max_{\kappa \in (0,1)} \rho_\kappa, \quad \kappa^* = \arg \max_{\kappa \in (0,1)} \rho_\kappa. \]
% If the density estimator is indeed parametric (see the remark of Lemma~\ref{lem:Dest}), we have $\beta_1 = \frac{1}{2}$, which corresponds to a maximum convergence rate of $\frac{1}{2}$. This result aligns with that in \cite[Thm~2]{xie2022windowlimited}.
\end{remark}

\begin{example}
Let the dominating measure $\mu$ be the Lebesgue measure on $\R^d$. Recall the definition of ${\cal H}_\gamma$ in Example~\ref{ex:kde}. Consider $p_0,p_1 \in {\cal H}_\gamma$ with bounded support and non-zero density. Consider using KDE as the estimator with some kernel that satisfies \eqref{ex:kde_kernel_order}. Previously we showed in \eqref{ex:kde_orderd} and Lemma~\ref{lem:Dest} that with a properly chosen $\bm{h}$, the optimal $\beta_1$ and $\beta_2$ are
% \vvv{[You are using the term ``rate of convergence" to mean something different here. Can you drop ``rate of convergence"?]} 
$\beta_1 = \beta_2 = \frac{2 \gamma}{2 \gamma + d}$. Therefore, in this case $\kappa^*$ and $\rho^*$ are
\[ \kappa^* = \arg \max_{\kappa \in (0,1)} \min \left\{\frac{2 \gamma}{2 \gamma + d} \kappa, 1-\kappa \right\} = \frac{2 \gamma + d}{4 \gamma + d}, \]
% and the optimal rate of convergence of WADD is
and
\[ \rho^* = \frac{2 \gamma}{4 \gamma + d}. \]
\end{example}

\section{Numerical Results}
\label{sec:num_res}

In this section, we present some numerical results for the proposed tests. Before presenting the main results, we first investigate from a numerical perspective the feasibility of condition~\eqref{eq:loo_fa_prod_assump}, which is key to Lemma~\ref{lem:fa_loo_0}.

\subsection{Discussion of Condition~\eqref{eq:loo_fa_prod_assump}}
\label{num:loo_assump_dis}

The quantity of interest is
\begin{equation} \label{eq:prod_Q}
    Q(m) := \E{\infty}{\max_{n=2,\dots,m}  \prod_{i=1}^{n} \frac{\widehat{p}^{n,1}_{-i}(X_i)}{p_0(X_i)} }.
\end{equation}
In Fig.~\ref{fig:loo_max_prod}, we study the numerical properties of $Q(m)$ for KDE with Gaussian kernels. The difference $\log(Q(m)) - 3\log m$ is plotted against $m$ for each $m = 5,10,\dots,100$. In the plot, it is observed that 
\begin{equation*}
    \log(Q(m)) - 3\log m < 0 \implies Q(m) \leq m^3
    %,~\text{for large } m.
\end{equation*}
Therefore, with the chosen density estimator, if further the window size is chosen such that $m_b \leq b^\vartheta (1+o(1))$ with some $\vartheta > 1$, then condition~\eqref{eq:loo_fa_prod_assump} is satisfied with $\varsigma = 3 \vartheta$.

\begin{figure}[tbp]
\centering
\includegraphics[width=0.8\textwidth]{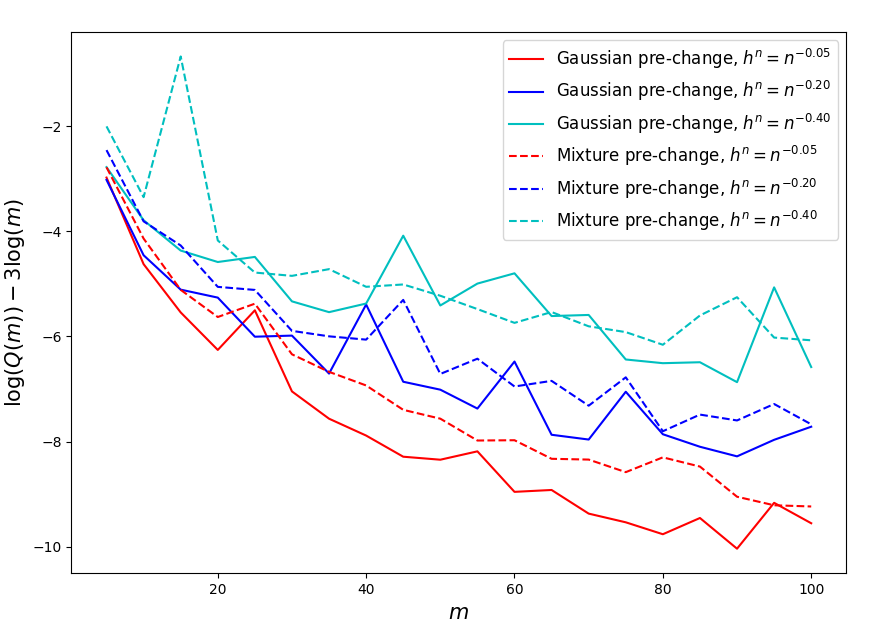}
% \vspace{-3mm}
\caption{$\log(Q(m)) - 3\log m$ versus $m$ with $Q(m)$ defined in \eqref{eq:prod_Q}. For each value of $m$, 100000 Monte Carlo runs are performed for each $n=2,\dots,m$, and the maximum product of likelihood ratios is averaged. The selected pre-change distributions are $\calN(0,1)$ (solid lines) and $\frac{1}{3}(\calN(-2,\frac{1}{4})+\calN(0,\frac{1}{4})+\calN(2,1))$ (dashed lines), with $\calN(\mu,\sigma^2)$ denote a Gaussian with mean $\mu$ and $\sigma^2$. The KDE with Gaussian kernel is used for density estimation, with bandwidth $h = n^{-r}$ where $r$ chosen as 0.05 (red), 0.2 (blue), and 0.4 (cyan). In both plots, it is observed that the difference trends lower as $m$ increases, and that all simulated values are below zero.
}
\label{fig:loo_max_prod}
\end{figure}

% Before presenting Monte Carlo simulation results, we comment on the computational complexity of all the tests mentioned in the following table. We assume that the density estimator of interest is KDE (defined in \eqref{def:kde}).
% \begin{center}
% \begin{tabular}{||l | c||} 
%  \hline
%  Test & Complexity at time $n$\\ [0.5ex] 
%  \hline\hline
%  LOO-CuSum & $\Theta((n \wedge m_\alpha)^3)$ \\ 
%  \hline
%  WLA-CuSum & $\Theta(w)$ when $n > w$, 0 otherwise \\
%  \hline
%  parallel-WLA-CuSum & $\Theta((n \wedge W_{\text{max}})^2)$ \\
%  \hline
%  LOO-WLA-CuSum & $\Theta(w)$ when $n > w$, $\Theta(n^3)$ otherwise \\ [1ex] 
%  \hline
% \end{tabular}
% \end{center}

\subsection{Performance of NGLR-CuSum Test}

In Fig.~\ref{fig:loo_perf}, we study the performance of the proposed NGLR-CuSum test (defined in \eqref{def:loo_cusum}) through Monte Carlo (MC) simulations when the pre-change distribution is ${\cal N}(0,1)$. The KDE (defined in \eqref{def:kde}) with a Gaussian kernel is used to estimate the density. The actual post-change distribution is ${\cal N}(0.5,1)$, but this knowledge is not used in the NGLR-CuSum test. The performance of the NGLR-CuSum test is compared against that of the following tests:
\begin{enumerate}
    \item the CuSum test (in \eqref{def:cusum}), which has full knowledge of the post-change distribution;
    \item the parametric window-limited GLR-CuSum test (in \eqref{eq:lai_glr_test}), in which it is assumed that the post-change distribution belongs to $\{{\cal N}(\theta,1)\}_{\theta \neq 0}$.
\end{enumerate}
The change-point is taken to be $\nu=1$.\footnote{Note that $\nu=1$ may not necessarily be the worst-case value for the change-point for the NGLR-CuSum test in general. However, extensive experimentation on this Gaussian mean-change problem with different values of $\nu$ ranging from 1 to 100, with a window-size of 100, shows that $\nu=1$ results in the largest expected delay among all $\nu$'s considered.}
Different window sizes are considered, among which the window size of 100 is sufficiently large to cover the full range of delay.
It is seen that the expected delay of the NGLR-CuSum test is close to that of the GLR-CuSum test for all window sizes considered. 
% The results also validate the first-order asymptotic optimality of the NGLR-CuSum test for large enough window size (Thm~\ref{thm:opt_loo}).

\begin{figure}[tbp]
\centerline{\includegraphics[width=.8\textwidth]{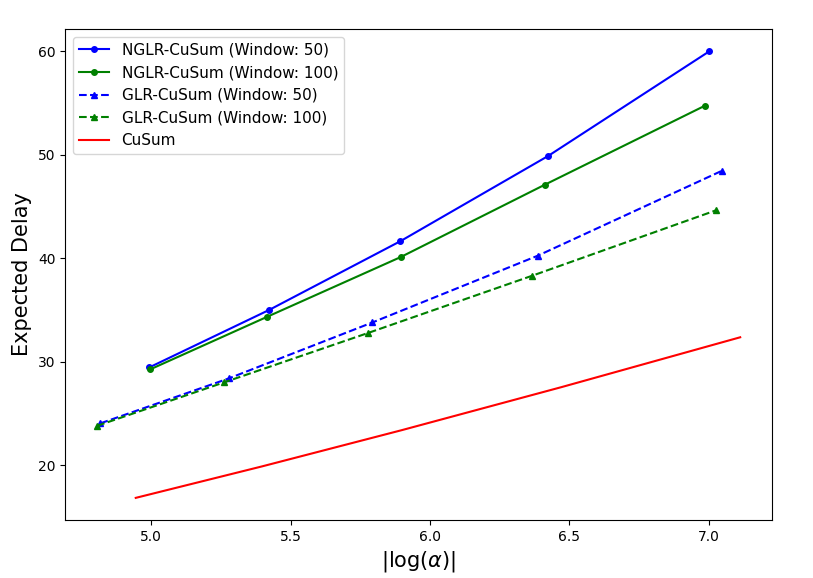}}
\vspace{-3mm}\caption{Comparison of operating characteristics of the NGLR-CuSum test (solid lines) with the CuSum test (in red) and the parametric window-limited GLR-CuSum test (dashed lines) in detecting a shift in the mean of a Gaussian. The pre- and post-change distributions are ${\cal N}(0,1)$ and ${\cal N}(0.5,1)$. The change-point $\nu = 1$. The kernel width parameter $h=10^{-1/5}$.
}
\label{fig:loo_perf}
\end{figure}

\subsection{Performance of NWLA-CuSum Test}

In Fig.~\ref{fig:wla_perf}, we study the performance of the proposed NWLA-CuSum test (defined in \eqref{def:wla_cusum}) through Monte Carlo (MC) simulations when the pre-change distribution is ${\cal N}(0,1)$. The KDE (defined in \eqref{def:kde}) is used to estimate the density. The actual post-change distribution is ${\cal N}(0.5,1)$. This knowledge is not used in the NWLA-CuSum test. The performance of the NWLA-CuSum test is compared against that of the following tests:
\begin{enumerate}
    \item the CuSum test (in \eqref{def:cusum}), which assumes full knowledge of the post-change distribution;
    \item the parametric window-limited GLR-CuSum test (in \eqref{eq:lai_glr_test}), in which it is assumed that the post-change distribution belongs to $\{{\cal N}(\theta,1)\}_{\theta \neq 0}$.;
    \item the parallel-NWLA-CuSum test, defined as
    \begin{equation*}
        \overline{\tau}_{\text{parallel}}(b,W_{\text{max}}) := \inf \left\{n > 1: \max_{1 \leq w \leq W_{\text{max}}} \overline{W}^w(n) \geq b \right\}.
    \end{equation*}
    % Here $\overline{\tau}_{\text{parallel}}(b,W_{\text{max}})$ can start right after the first sample, and the false alarm constraint is satisfied with $b = \log(W_{\text{max}} / \alpha)$.
\end{enumerate}
Using a similar analysis as in Section~\ref{sec:qcd_adp}, it can be shown that the parallel-NWLA-CuSum test is also asymptotically optimal with the threshold chosen as $b_\alpha = \abs{\log \alpha} + \log W_{\text{max}}$. The change-point is taken to be $\nu=1$, which corresponds to the worst-case expected delay for the NWLA-CuSum test (shown in Lemma~\ref{lem:wla_esssup}), the parallel NWLA-CuSum test, and the CuSum test, but not necessarily for the parametric window-limited GLR-CuSum test.
Different window sizes are also considered.
We note that there is a trade-off to consider in the design of the window size for the NWLA-CuSum test. If the window size is too small, the post-change density might not be accurately estimated. On the other hand, if the window size is too large, the test might wait too long before its statistic starts to grow in the post-change regime. To address this trade-off, the parallel-NWLA-CuSum test could be employed without specifying a pre-defined window size, albeit at the expense of having to run more tests in parallel.

\begin{figure}[tbp]
\centerline{\includegraphics[width=.8\textwidth]{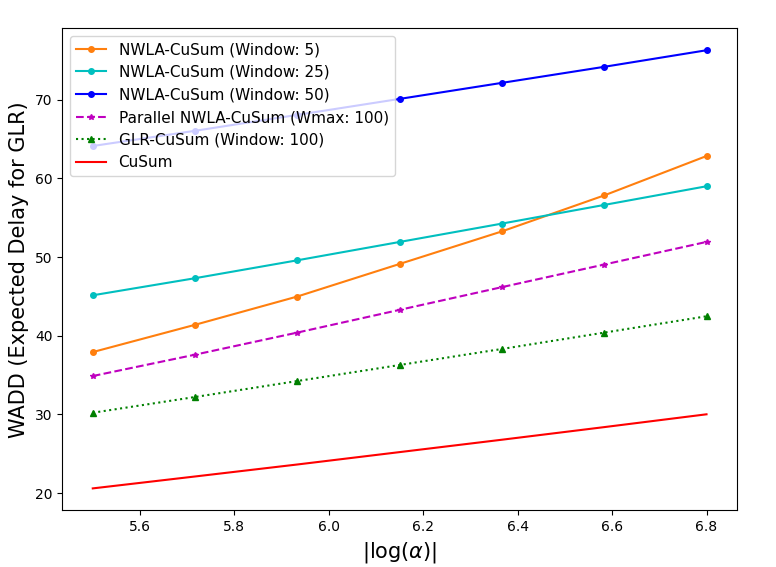}}
\vspace{-3mm}\caption{Comparison of operating characteristics of the NWLA-CuSum test (solid lines) and the parallel-NWLA-CuSum test (dashed line) with the CuSum test (in red) and the parametric window-limited GLR-CuSum test (dotted line) in detecting a shift in the mean of a Gaussian. The pre- and post-change distributions are ${\cal N}(0,1)$ and ${\cal N}(0.5,1)$. The change-point $\nu = 1$. The kernel width parameter $h=w^{-1/5}$, where $w$ is the window size.
}
\label{fig:wla_perf}
\end{figure}

\subsection{Comparison between NGLR-CuSum Test and NWLA-CuSum Test}

We now compare the performance between the NGLR-CuSum test and the parallel-NWLA-CuSum test. First, we compare the computational complexity of both tests if the KDE is used for density estimation. For the NGLR-CuSum test, at each input observation $X_n$, for all hypothesized change-points $k \in ((n-m)^+,n-1]$ (where $m$ is the window size), the pair-wise kernel value $K(X_i,X_j)$ for each pair of $i,j \in [k,n]$ with $i \neq j$ is calculated and a LOO kernel estimate is %formulated 
evaluated at each point $X_k,\dots,X_n$. Then, the LOO-estimated log-likelihood ratios at these points are summed up and the maximum sum (over $k$) is compared to the given threshold. Thus, the computation complexity of the NGLR-CuSum test at each time is $\Theta((n \wedge m)^3)$. For the parallel-NWLA-CuSum test, at each given $X_n$, $K(X_i,X_n)$ is first calculated for each $i \in [(n-W_{\text{max}}) \vee 1,n-1]$. Then, with each possible window size $w=1,\dots,W_{\text{max}} \wedge (n-1)$, a WLA kernel estimate is evaluated to update the corresponding NWLA-CuSum statistic in an efficient manner. Finally, the maximum NWLA-CuSum statistic (over $w$) is compared to the given threshold. Therefore, the computation complexity of the parallel-NWLA-CuSum test is $\Theta((n \wedge W_{\text{max}})^2)$. Also note that in practice, $W_{\text{max}}$ is usually chosen to be smaller than $m$, by comparing the requirement of $m_b$ in \eqref{eq:mb_lower} with the window condition in Thm~\ref{thm:opt_wla}.

In Fig.~\ref{fig:loo_vs_pwla}, we compare the numerical performance of the NGLR-CuSum test and the parallel-NWLA-CuSum test in detecting a shift in the mean of a Gaussian. When the window size is large enough, both tests achieve similar performance at $\nu=1$, which corresponds to the worst-case $\nu$ for the parallel-NWLA-CuSum test, but not necessarily for the NGLR-CuSum test.

\begin{figure}[tbp]
\centerline{\includegraphics[width=.8\textwidth]{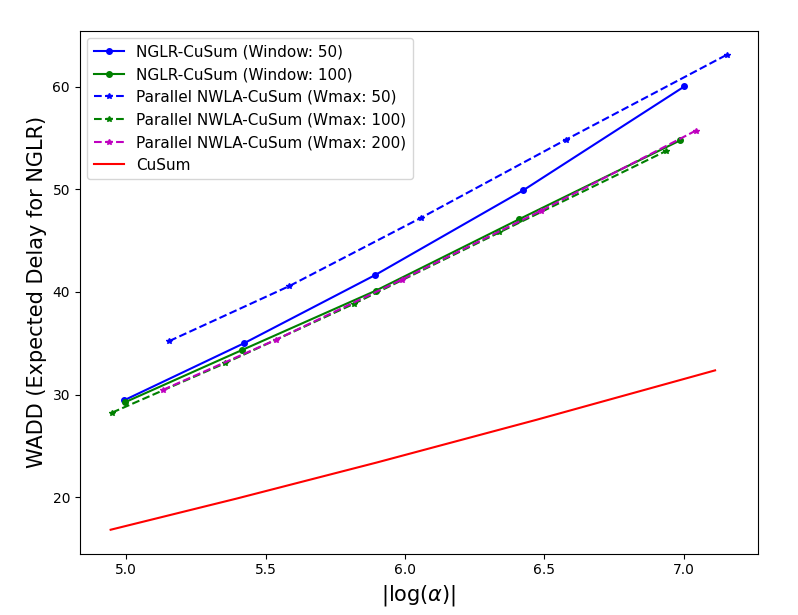}}
\vspace{-3mm}\caption{Comparison of operating characteristics of the NGLR-CuSum test (solid lines) and the parallel-NWLA-CuSum test (dashed lines) with the CuSum test (in red) in detecting a shift in the mean of a Gaussian. The pre- and post-change distributions are ${\cal N}(0,1)$ and ${\cal N}(0.5,1)$. The change-point $\nu = 1$. The kernel width parameters are $h=10^{-1/5}$ for the NGLR-CuSum test, and $h=w^{-1/5}$ for the parallel-NWLA-CuSum test, where $w$ is the window size.
}
\label{fig:loo_vs_pwla}
\end{figure}

\section{Conclusion}
\label{sec:concl}

We studied a window-limited non-parametric generalized likelihood ratio (NGLR) CuSum test and a non-parametric window-limited adaptive (NWLA) CuSum test for QCD. Both tests do not assume any explicit knowledge of the post-change distribution, and do not require post-change training samples ahead of time. We characterized a generic class of density estimators that enable detection. For both tests, we provided a way to set the test thresholds to meet false alarm constraints, and we showed that the tests are first-order asymptotically optimal with the selected thresholds, as the false alarm rate goes to zero. 
% Further, we characterized the convergence rate in the delay of the NWLA-CuSum test as a function of the false alarm rate \vvv{[I'm still confused by this sentence. Can you explain what you mean by convergence rate?]}. 
We validated our analysis through Monte-Carlo simulations, in which we compared both tests with baseline tests that have distributional knowledge. 
% \vvv{Any directions for future research?}

%An interesting direction for future research is the study of quickest change detection problems where neither the pre-change nor the post-change distribution is known, but a set of pre-change samples is available . It is also interesting to apply recent deep learning density estimation methods to quickest change detectors.

\section*{Acknowledgements}
The authors would like to thank George Moustakides for helpful discussions regarding the NWLA-CuSum test.

% \vvv{The references need updating. For example, references 5 and 6 do not have volume, number, month, page information.} \yuchen{Fixed.}

\bibliographystyle{IEEEtran}
\bibliography{ref}

\appendix

\begin{proof}[Proof of Lemma~\ref{lem:Dest}]
For brevity we write $\widehat{p}(X) = \widehat{p}^{w}_n(X_n)$, and note that $X$ is independent of $\widehat{p}$. We use the fact that $\log{s} \leq (s-1)$ to establish an upper bound on the first moment.
In particular,
\begin{align} \label{eq:mise_est_cond_m1_bound_proof}
    \Ep{\log\frac{p(X)}{\widehat{p}(X)}} &\leq \Ep{\frac{p(X)}{\widehat{p}(X)} - 1} \nonumber\\
    &= \Ep{\int \frac{p^2(x) - p(x) \widehat{p}(x)}{\widehat{p}(x)} d \mu(x)} \nonumber\\
    &\stackrel{(*)}{=} \Ep{\int \frac{p^2(x) - 2 p(x) \widehat{p}(x) + \widehat{p}^2(x)}{\widehat{p}(x)} d \mu(x)} \nonumber\\
    &\leq \frac{1}{\underline{\zeta}} \MISE(p, \widehat{p})
\end{align}
where $(*)$ follows by the independence between $\widehat{p}$ and $X$ and because both $p$ and $\widehat{p}$ are densities. This establishes \eqref{eq:converg_m1_bound} with $\beta_1 = \beta_3$. 
The proof for \eqref{eq:converg_m1_bound_loo} is similar, noting the independence between $\widehat{p}^{n,k}_{-i}$ and $X_i$.
% Similarly,
% \[ \Ep{\log\frac{p(X_i)}{\widehat{p}_i(X_i)}} \geq \frac{\log(\overline{\zeta}/ \underline{\zeta})}{1-\underline{\zeta}/ \overline{\zeta}} \Ep{\frac{p(X_i)}{\widehat{p}_i(X_i)} - 1} \geq \frac{\log(\overline{\zeta}/ \underline{\zeta})}{\overline{\zeta} - \underline{\zeta}} \MISE(p, \widehat{p}^{n,k}_{-i}). \]
For the second moment, note that $(\log{s})^2 \leq r (s-1)^2$ on $s \geq \underline{\zeta} / \overline{\zeta}$ with $r$ as defined in \eqref{eq:rdef}. Thus,
\begin{align} \label{eq:mise_est_cond_m2_bound_proof}
    \Ep{\brc{\log\frac{p(X)}{\widehat{p}(X)}}^2} &\leq \Ep{r\left(\frac{p(X)}{\widehat{p}(X)} - 1\right)^2} \nonumber\\
    &= r \Ep{\int \frac{\left(p(x) - \widehat{p}(x)\right)^2}{\widehat{p}^2(x)} p(x) d \mu(x)} \nonumber\\
    &\leq \frac{\overline{\zeta} r}{\underline{\zeta}^2} \MISE (p, \widehat{p})
\end{align}
which shows \eqref{eq:converg_m2_bound} with $\beta_2 = \beta_3$. Furthermore, for \eqref{eq:converg_m2_bound_loo},
\begin{align} \label{eq:mise_est_cond_m2_bound_loo_proof}
    \Ep{\brc{\frac{1}{n-k+1}\sum_{i=k}^n \log\frac{p(X_i)}{\widehat{p}^{n,k}_{-i}(X_i)}}^2} \nonumber
    &\stackrel{(a)}{\leq} \Ep{\frac{1}{n-k+1}\sum_{i=k}^n \brc{\log\frac{p(X_i)}{\widehat{p}^{n,k}_{-i}(X_i)}}^2} \nonumber\\
    &\stackrel{(b)}{=} \Ep{\brc{\log\frac{p(X_n)}{\widehat{p}^{n,k}_{-n}(X_n)}}^2} \nonumber\\
    &\leq \frac{\overline{\zeta} r C_3 }{\underline{\zeta}^2 (n-k+1)^{\beta_3}}.
\end{align}
Here $(a)$ follows by Jensen's inequality, and $(b)$ follows because $\log\frac{p(X_i)}{\widehat{p}^{n,k}_{-i}(X_i)}$ has the same distribution for all $i \in [k, n]$. The proof is now complete. \qedhere
\end{proof}

\begin{proof}[Proof of Corollary~\ref{cor:lem_Dest}]
Following the argument in \eqref{eq:mise_est_cond_m1_bound_proof}, we have
\[ \Ep{\log\frac{p(X)}{\widehat{p}(X)}} = O \brc{\frac{1}{\underline{\zeta}} \MISE(p, \widehat{p})} = O(w^{-(\beta_3-\underline{\beta})}), \]
and thus the first moment results (i.e., that of $\beta_1$) follow immediately for \eqref{eq:converg_m1_bound} and \eqref{eq:converg_m1_bound_loo}.

Now we turn to the second moment. From the definition of $r$,
\[ r = \brc{\frac{\log (\overline{\zeta}/ \underline{\zeta}) }{1 - (\underline{\zeta} / \overline{\zeta})}}^2 \leq (\log (\overline{\zeta}/ \underline{\zeta}))^2 = (\overline{\beta} - \underline{\beta})^2 (\log w)^2 \]
Therefore, following the argument in \eqref{eq:mise_est_cond_m2_bound_proof}, we get
\begin{align*}
    \Ep{\brc{\log\frac{p(X)}{\widehat{p}(X)}}^2} &= O\brc{\frac{\overline{\zeta} r}{\underline{\zeta}^2} \MISE (p, \widehat{p})}\\
    &= O\brc{\frac{w^{\overline{\beta}}}{w^{-2 \underline{\beta}} w^{\beta_3}} (\log w)^2 } \\
    &= O\brc{w^{-(\beta_3 - 2 \underline{\beta} - \overline{\beta} - \varrho)} }
\end{align*}
where $\varrho > 0$ is an arbitrarily small constant.
This shows the second moment result for \eqref{eq:converg_m2_bound}. The second moment result for \eqref{eq:converg_m2_bound_loo} is similar if we follow the argument in \eqref{eq:mise_est_cond_m2_bound_loo_proof}.
% \vvv{The $\Theta$ notation has not been defined. Also, it is not clear how this equation gives the equation for $\beta_2$ in the Corollary. You need to add the steps.}
\end{proof}

\begin{proof}[Proof of Lemma~\ref{lem:fa_loo_0}]
Fix $\ell > 1$. For all thresholds $b > 0$,
\begin{align} \label{eq:loo_fa_prob_window}
    &\Prob{\infty}{\ell \leq \widehat{\tau}(b) < \ell + m_b } \nonumber\\
    &\stackrel{(i)}{=} \Prob{\infty}{\exists (k,n) \text{ with } \ell \leq n < \ell + m_b, (n-m_b)^+ < k \leq n-1 : \sum_{i=k}^n \widehat{Z}^{n,k}_i \geq b } \nonumber\\
    &\stackrel{(ii)}{\leq} \Prob{\infty}{\exists (k,n) \text{ with } (\ell-m_b)^+ < k < \ell + m_b, k+1 \leq n \leq k+m_b : \sum_{i=k}^n \widehat{Z}^{n,k}_i \geq b } \nonumber\\
    &= \mathbb P_\infty \brc{\bigcup_{k = (\ell-m_b)^++1}^{\ell+m_b-1} \cbrc{ \exists n \text{ with } k + 1 \leq n \leq k + m_b: \sum_{i=k}^n \widehat{Z}^{n,k}_i \geq b } } \nonumber\\
    &\leq \sum_{k = (\ell-m_b)^++1}^{\ell+m_b-1} \Prob{\infty}{\exists n \text{ with } k + 1 \leq n \leq k + m_b: \sum_{i=k}^n \widehat{Z}^{n,k}_i \geq b } \nonumber\\
    &\leq \sum_{k=(\ell-m_b)^++1}^{\ell+m_b-1} \Prob{\infty}{\tau_k(b) \leq k + m_b}
\end{align}
where $(i)$ follows from the definition of $\widehat{\tau}(b)$, and $(ii)$ follows because
\[ \ell \leq n < \ell + m_b, ~~(n-m_b)^+ < k \leq n-1 
\]
implies that
\[
(\ell-m_b)^+ < k < \ell + m_b,~~ k+1 \leq n \leq k+m_b. 
\]
Here we define the auxiliary stopping time $\tau_k(b)$ for $k \geq 1$ as
\begin{equation}
    \tau_k(b) := \inf\left\{n \in [k + 1, k + m_b]: \sum_{i=k}^n \widehat{Z}^{n,k}_i \geq b \right\}
\end{equation}
and we define $\inf\emptyset := \infty$. Now, for each $k \in [(\ell-m_b)^+, \ell+m_b)$, we have
\begin{align} \label{eq:loo_fa_prob_tauk}
    &\Prob{\infty}{k+1 \leq \tau_k(b) \leq k + m_b} \nonumber\\
    &= \int \ind{k+1 \leq \tau_k(b) \leq k + m_b} d \mathbb{P}_\infty \nonumber\\
    &= \int \ind{k+1 \leq \tau_k(b) \leq k + m_b} \prod_{i=k}^{\tau_k(b)} \frac{\widehat{p}^{\tau_k(b),k}_{-i}(x_i)}{p_0(x_i)} \prod_{i=k}^{\tau_k(b)} \frac{p_0(x_i)}{\widehat{p}^{\tau_k(b),k}_{-i}(x_i)} d \mathbb{P}_\infty \nonumber\\
    &\stackrel{(iii)}{\leq} e^{-b} \int \ind{k+1 \leq \tau_k(b) \leq k + m_b} \prod_{i=k}^{\tau_k(b)} \frac{\widehat{p}^{\tau_k(b),k}_{-i}(x_i)}{p_0(x_i)} d \mathbb{P}_\infty
\end{align}
where $(iii)$ follows from the definition of $\tau_k(b)$.

Now,
\begin{align*}
    &\int \ind{k+1 \leq \tau_k(b) \leq k + m_b} \prod_{i=k}^{\tau_k(b)} \frac{\widehat{p}^{\tau_k(b),k}_{-i}(x_i)}{p_0(x_i)} d \mathbb{P}_\infty\\
    &\leq \int \max_{n \in [k, k+m_b]} \prod_{i=k}^{n} \frac{\widehat{p}^{n,k}_{-i}(x_i)}{p_0(x_i)} d \mathbb{P}_\infty\\
    &\stackrel{(iv)}{\leq} b^\varsigma (1+o(1))
\end{align*}
where $(iv)$ follows from condition~\eqref{eq:loo_fa_prod_assump}. Combining with \eqref{eq:loo_fa_prob_window} and \eqref{eq:loo_fa_prob_tauk}, we have
\[ \sup_{\ell > 1} \Prob{\infty}{\ell \leq \widehat{\tau}(b) < \ell + m_b } \leq 2 m_b e^{-b} b^\varsigma (1+o(1)), \]
and by \cite[Lemma~2.2(ii)]{tartakovsky2020qcdbook},
\[ \E{\infty}{\widehat{\tau}(b)} \geq \frac{1}{8} e^b b^{-\varsigma} (1+o(1)). \]
Choosing $b = b_\alpha$ then satisfies the false alarm constraint asymptotically. \qedhere

\end{proof}

\begin{proof}[Proof of Lemma~\ref{lem:delay_loo}]
Recall that $I = \KL{p_1}{p_0}$. Define a function $\delta_b$ such that $\delta_0 := 1-\eta^{-1} < 1$, that $\delta_b \in (0,\delta_0)$ is decreasing in $b$, and that $\delta_b \searrow 0$ as $b \to \infty$. Define
\begin{equation}
\label{eq:delay_loo_nb}
    n_b := \floor{\frac{b}{I(1-\delta_b)}}
\end{equation}
% By definition of the window size $m_\alpha$ in \eqref{eq:malpha}, if $b_\alpha = \abs{\log\alpha}(1+o(1))$, then
and thus
\begin{equation*}
    n_b < \frac{b}{I(1-\delta_0)} = \frac{\eta b}{I} \leq m_b
\end{equation*}
when $b$ is large enough. If for now that we can get a large enough $b$ to satisfy
% \begin{equation*}
%     \sup_{t \geq \nu \geq 1} \esssup \Prob{\nu}{\left. \sum_{i=t}^{t+n_b} \widehat{Z}^{t+n_b,t}_i \leq (1-\delta_b) I^{-1} n_b\right| {\cal F}_{t-1}} < \delta_b
% \end{equation*}
% or equivalently,
\begin{equation}
\label{eq:delay_loo_main}
    \Prob{\nu}{\sum_{i=n}^{n+n_b-1} \widehat{Z}^{n+n_b-1,n}_i < b} < 2 \delta_b^2,~\forall (\nu,n): n \geq \nu \geq 1.
\end{equation}
Then in the following, we will show by induction that
\begin{equation} \label{eq:delay_loo_induction}
    \esssup \Prob{\nu}{\widehat{\tau}(b)-\nu+1 > k n_b | \widehat{\tau}(b)-\nu+1 > (k-\ell) n_b, {\cal F}_{\nu-1} } \leq (2 \delta_b^2)^\ell,~\forall \nu \geq 1, k \geq \ell
\end{equation}
when $b$ is large enough.

We will induct on the variable $\ell$. The base case is where $\ell=1$, and we get, $\forall k \geq 1$,
\begin{align}
\label{eq:delay_loo_single_step}
    &\esssup \Prob{\nu}{\widehat{\tau}(b)-\nu+1 > k n_b | \widehat{\tau}(b)-\nu+1 > (k-1) n_b, {\cal F}_{\nu-1} } \nonumber\\
    &\stackrel{(i)}{\leq} \esssup \Prob{\nu}{\left.\widehat{\tau}(b)-\nu+1 > k n_b \right| {\cal F}_{\nu+(k-1)n_b-1} } \nonumber\\
    &\stackrel{(ii)}{\leq} \esssup \Prob{\nu}{\left.\sum_{i=\nu+(k-1)n_b}^{\nu+k n_b - 1} \widehat{Z}^{\nu+k n_b-1, \nu+(k-1)n_b}_i < b \right| {\cal F}_{\nu+(k-1)n_b-1} } \nonumber\\
    &\stackrel{(iii)}{=} \Prob{\nu}{\sum_{i=\nu+(k-1)n_b}^{\nu+k n_b - 1} \widehat{Z}^{\nu+k n_b-1, \nu+(k-1)n_b}_i < b } \nonumber\\
    &\stackrel{(iv)}{\leq} 2 \delta_b^2.
\end{align}
In the series of inequalities above, $(i)$ is by definition of essential supremum and $\widehat{\tau}(b)$, $(iii)$ follows from independence between the event $\Big\{\sum_{i=\nu+(k-1)n_b}^{\nu+k n_b - 1} \widehat{Z}^{\nu+k n_b-1, \nu+(k-1)n_b}_i < b\Big\}$ and ${\cal F}_{\nu+(k-1)n_b-1}$, and $(iv)$ follows from \eqref{eq:delay_loo_main}. The reason for $(ii)$ is as follows.
The event $\{ \widehat{\tau}(b)-\nu+1 > k n_b \}$ implies that no change has been detected until time $n=k n_b + \nu - 1$. In particular, this means that at time $n=k n_b + \nu - 1$,
\[ \max_{(\nu+k n_b-1-m_b)^+ < \kappa \leq \nu+k n_b-2} \sum_{i=\kappa}^{\nu+k n_b-1} \widehat{Z}^{\nu+k n_b-1,\kappa}_i < b. \]
Now, since $n_b \leq m_b$,
\[ \sum_{i=\nu+(k-1)n_b}^{\nu+k n_b - 1} \widehat{Z}^{\nu+k n_b-1, \nu+(k-1)n_b}_i \leq \max_{(\nu+k n_b - 1-m_b)^+ < \kappa \leq \nu+k n_b - 2} \sum_{i=\kappa}^{\nu+k n_b-1} \widehat{Z}^{\nu+k n_b-1,\kappa}_i < b.\]
The induction base is thus established.

% \vvv{It's still not clear to me what you are trying to prove using induction. Why don't you state that first and say that you will be proving the statement using induction? Also, the induction argument needs to be cleanly written, with the induction hypothesis stated clearly first, and what that implies coming next.}

We now turn to the induction step. Suppose we have proved that
\[ \esssup \Prob{\nu}{\widehat{\tau}(b)-\nu+1 > k n_b | \widehat{\tau}(b)-\nu+1 > (k-\ell) n_b, {\cal F}_{\nu-1} } \leq (2 \delta_b^2)^\ell,~\forall k \geq l. \]
Then, for $k \geq \ell + 1$,
\begin{align*}
    &\Prob{\nu}{\widehat{\tau}(b)-\nu+1 > k n_b | \widehat{\tau}(b)-\nu+1 > (k-\ell-1) n_b, {\cal F}_{\nu-1} }\\
    &\stackrel{(v)}{=}\Prob{\nu}{\widehat{\tau}(b)-\nu+1 > k n_b, \widehat{\tau}(b)-\nu+1 > (k-1) n_b | \widehat{\tau}(b)-\nu+1 > (k-\ell-1) n_b, {\cal F}_{\nu-1} }\\
    &= \Prob{\nu}{\widehat{\tau}(b)-\nu+1 > (k-1) n_b | \widehat{\tau}(b)-\nu+1 > (k-\ell-1) n_b, {\cal F}_{\nu-1} } \\
    &\quad \times \Prob{\nu}{\widehat{\tau}(b)-\nu+1 > k n_b | \widehat{\tau}(b)-\nu+1 > (k-1) n_b, \widehat{\tau}(b)-\nu+1 > (k-\ell-1) n_b, {\cal F}_{\nu-1} }
\end{align*}
where $(v)$ holds because $\{\widehat{\tau}(b)-\nu+1 > k n_b\} \subseteq \{\widehat{\tau}(b)-\nu+1 > (k-1) n_b\}$. Thus,
\begin{align*}
    &\esssup \Prob{\nu}{\widehat{\tau}(b)-\nu+1 > k n_b | \widehat{\tau}(b)-\nu+1 > (k-\ell-1) n_b, {\cal F}_{\nu-1} } \\
    &\leq \esssup \Prob{\nu}{\widehat{\tau}(b)-\nu+1 > (k-1) n_b | \widehat{\tau}(b)-\nu+1 > (k-\ell-1) n_b, {\cal F}_{\nu-1} } \\
    &\quad \times \esssup \Prob{\nu}{\widehat{\tau}(b)-\nu+1 > k n_b | \widehat{\tau}(b)-\nu+1 > (k-1) n_b, \widehat{\tau}(b)-\nu+1 > (k-\ell-1) n_b, {\cal F}_{\nu-1} }\\
    &\stackrel{(vi)}{\leq} \esssup \Prob{\nu}{\widehat{\tau}(b)-\nu+1 > (k-1) n_b | \widehat{\tau}(b)-\nu+1 > (k-\ell-1) n_b, {\cal F}_{\nu-1} } \\
    &\quad \times \esssup \Prob{\nu}{\widehat{\tau}(b)-\nu+1 > k n_b | {\cal F}_{\nu+(k-1)n_b-1} }\\
    &\stackrel{(vii)}{\leq} \esssup \Prob{\nu}{\widehat{\tau}(b)-\nu+1 > (k-1) n_b | \widehat{\tau}(b)-\nu+1 > (k-\ell-1) n_b, {\cal F}_{\nu-1} } \times (2 \delta_b^2)\\
    &\leq (2 \delta_b^2)^{\ell+1}
\end{align*}
where $(vi)$ follows by definition of essential supremum and the fact that 
\[
\{\widehat{\tau}(b)-\nu+1 > (k-1) n_b, \widehat{\tau}(b)-\nu+1 > (k-\ell-1) n_b, {\cal F}_{\nu-1}\} \subset \{{\cal F}_{\nu+(k-1)n_b-1}\}
\]
and $(vii)$ follows by \eqref{eq:delay_loo_single_step}.
Therefore, by induction, we get \eqref{eq:delay_loo_induction}. In particular, letting $\ell = k$, we get
\[ \esssup \Prob{\nu}{\widehat{\tau}(b)-\nu+1 > k n_b | {\cal F}_{\nu-1} } \leq (2 \delta_b^2)^k,~\forall \nu \geq 1 \]
for all sufficiently large $b$'s.

% \vvv{what recursion? Again you seem to be skipping steps that are essential.}
% \[ \esssup \Prob{\nu}{\widehat{\tau}(b)-\nu+1 > k n_b | {\cal F}_{\nu-1} } \leq (2 \delta_b^{2})^k.\]

% \begin{align}
%     &\quad \esssup \Prob{\nu}{\widehat{\tau}(b)-\nu+1 > k n_b | {\cal F}_{\nu-1} } \nonumber\\
%     &\leq \esssup \Prob{\nu}{\widehat{\tau}(b)-\nu+1 > (k-1) n_b | {\cal F}_{\nu-1} } \times \nonumber\\ 
%     &\esssup \Prob{\nu}{\widehat{\tau}(b)-\nu+1 > k n_b | \widehat{\tau}(b)-\nu+1 > (k-1) n_b, {\cal F}_{\nu-1} } \nonumber\\
%     &\leq (2 \delta_b^{2})^k.
% \end{align}

Therefore, for all sufficiently large $b$'s,
\begin{align*}
    &\sup_{\nu \geq 1} \esssup \E{\nu}{n_b^{-1} (\widehat{\tau}(b)-\nu+1)^+|{\cal F}_{\nu-1}} \nonumber\\
    &\leq \sum_{k=1}^\infty \esssup \Prob{\nu}{\widehat{\tau}(b)-\nu+1 > k n_b | {\cal F}_{\nu-1}} \nonumber\\
    &\leq \sum_{k=0}^\infty (2 \delta_b^{2})^k = \frac{1}{1-2 \delta_b^{2}}.
\end{align*}
Recall the definition of WADD in \eqref{eq:def_wadd}. As $b \to \infty$, this implies that
\begin{equation*}
    \WADD{\widehat{\tau}(b)} \leq \frac{n_b}{1-2 \delta_b^2} \leq \frac{b}{I (1-\delta_b) (1-2 \delta_b^2)} = \frac{b}{I} (1+o(1)).
\end{equation*}

It remains to show \eqref{eq:delay_loo_main}. Write 
\[ Z_i := \log \frac{p_1(X_i)}{p_0(X_i)}. \]
% Note that ${\cal F}_{t-1}$ can be dropped by independence between ${\cal F}_{t-1}$ and $\widehat{Z}^{t+n_b-1,t}_i$. \vvv{If ${\cal F}_{t-1}$ can be dropped, then why have it in \eqref{eq:delay_loo_main} at all? It is confusing. Better to give the assumption without the conditioning, and explain in step (iii) above that you can drop the conditioning and apply the assumption. Also, you should remove the sup over $t \geq \nu \geq 1$ in \eqref{eq:delay_loo_main} and say that you assume the condition holds for all $t,\nu$ such that $t \geq \nu \geq 1$.}
For any $n \geq \nu \geq 1$ and $\eps > 0$,
\begin{align}
\label{eq:delay_loo_main2}
    &\Prob{\nu}{\sum_{i=n}^{n+n_b-1} \widehat{Z}^{n+n_b-1,n}_i < b} \nonumber\\
    &= \Prob{\nu}{\sum_{i=n}^{n+n_b-1} \widehat{Z}^{n+n_b-1,n}_i < b, \sum_{i=n}^{n+n_b-1} Z_i - \widehat{Z}^{n+n_b-1,n}_i \leq \eps} + \nonumber\\
    &\qquad \Prob{\nu}{\sum_{i=n}^{n+n_b-1} \widehat{Z}^{n+n_b-1,t}_i < b, \sum_{i=n}^{n+n_b-1} Z_i - \widehat{Z}^{n+n_b-1,t}_i \geq \eps} \nonumber\\
    &\leq \Prob{\nu}{\sum_{i=n}^{n+n_b-1} Z_i \leq b + \eps} + \Prob{\nu}{\frac{1}{n_b} \sum_{i=n}^{n+n_b-1} \left(Z_i - \widehat{Z}^{n+n_b-1,t}_i\right) \geq \frac{\eps}{n_b}} \nonumber\\
    &= \Prob{1}{\sum_{i=1}^{n_b} Z_i \leq b + \eps} + \Prob{1}{\frac{1}{n_b} \sum_{i=1}^{n_b} \left(Z_i - \widehat{Z}^{n_b,1}_i\right) \geq \frac{\eps}{n_b}}
\end{align}
where $Z_i$ is the true log-likelihood ratio at time $i$. Observe that the first term increases with $\eps$, while the second term decreases. It is important to choose a proper $\eps = \eps_b$ in order to keep both terms small. The idea in the following is that we first choose a proper $\eps = \eps_b$ by controlling the second term, and then verify that it is small enough for the first term when $b$ becomes large.

Below, the goal is to choose $\eps = \eps_b$ and $\delta_b$ such that
\[ \Prob{1}{\sum_{i=1}^{n_b} Z_i < b + \eps_b} \leq \delta_b^2 \quad \text{and}\quad \Prob{1}{\frac{1}{n_b} \sum_{i=1}^{n_b} \left(Z_i - \widehat{Z}^{n_b,1}_i\right) \geq \frac{\eps_b}{n_b}} \leq \delta_b^2 \]
hold simultaneously. In the following, write $\widehat{p}_i$ and $\widehat{Z}_i$ as short-hand notations for $\widehat{p}^{n_b,1}_{-i}$ and $\widehat{Z}^{n_b,1}_{-i}$, respectively.
% \vvv{This is confusing since \eqref{eq:delay_loo_main2} does not have $\widehat{p}^{n_b,1}_{-i}$. Maybe just replace ``For the second term in \eqref{eq:delay_loo_main2}," by ``In the following,"}
Note that $\E{1}{Z_i - \widehat{Z}_i} = \E{1}{\KL{p_1}{\widehat{p}_i}}$. Under the conditions for the estimator in \eqref{eq:converg_m1_bound_loo} and \eqref{eq:converg_m2_bound_loo}, the mean and variance of $n_b^{-1} \sum_{i=1}^{n_b} (Z_i - \widehat{Z}_i)$ can be bounded as
\begin{align} \label{eq:delay_loo_mean_var_bound}
    &\E{1}{\frac{1}{n_b} \sum_{i=1}^{n_b} (Z_i - \widehat{Z}_i)} = \E{1}{\frac{1}{n_b} \sum_{i=1}^{n_b} \log\frac{p_1(X_i)}{\widehat{p}_i(X_i)}} \leq \frac{C_1}{n_b^{\beta_1}} \nonumber \\
    &\Var{1}{\frac{1}{n_b} \sum_{i=1}^{n_b} (Z_i - \widehat{Z}_i)} = \Var{1}{\frac{1}{n_b} \sum_{i=1}^{n_b} \log\frac{p_1(X_i)}{\widehat{p}_i(X_i)}} \leq \frac{C_2}{n_b^{\beta_2}}.
\end{align}
% \vvv{I would replace $\E{1}{\log\frac{p_1(X_i)}{\widehat{p}_i(X_i)}}$ by $\E{1}{(Z_i - \widehat{Z}_i)}$ in the above equation. I know they are the same thing but it would cause less confusion.}

Now,
\begin{align}
\label{eq:delay_loo_term2}
    &\Prob{1}{\frac{1}{n_b} \sum_{i=1}^{n_b} \left(Z_i - \widehat{Z}_i\right) \geq \frac{\eps_b}{n_b}} \nonumber\\
    &\leq \Prob{1}{\abs{\frac{1}{n_b} \sum_{i=1}^{n_b} \left(Z_i - \widehat{Z}_i\right) - \E{1}{\KL{p_1}{\widehat{p}_i}}} \geq \frac{\eps_b}{n_b} - \E{1}{\KL{p_1}{\widehat{p}_i}}} \nonumber\\
    &\stackrel{(*)}{\leq} \Var{1}{\frac{1}{n_b} \sum_{i=1}^{n_b} \log\frac{p_1(X_i)}{\widehat{p}_i(X_i)}} \left(\frac{\eps_b}{n_b} - \E{1}{\KL{p_1}{\widehat{p}_i}}\right)^{-2} \nonumber\\
    &\leq \frac{C_2}{n_b^{\beta_2}} \left(\frac{\eps_b}{n_b} - \E{1}{\KL{p_1}{\widehat{p}_i}}\right)^{-2}
\end{align}
for any $\eps_b > n_b \times \E{1}{\KL{p_1}{\widehat{p}_i}}$. Here $(*)$ follows from Chebyshev's inequality.
Now, \eqref{eq:delay_loo_term2} is less than or equal to $\delta_b^2$ if
\[
 \frac{\eps_b}{n_b} - \E{1}{\KL{p_1}{\widehat{p}_i}} \geq \frac{\sqrt{C_2}}{\delta_b n_b^{\frac{\beta_2}{2}}}
\]
which is equivalent to
\begin{equation}
\label{eq:delay_loo_eps_eqn}
    \eps_b \geq \frac{\sqrt{C_2} n_b^{1-\beta_2 / 2}}{\delta_b} + n_b \E{1}{\KL{p_1}{\widehat{p}_i}}.
\end{equation}
Consider the two terms on the right-hand-side of \eqref{eq:delay_loo_eps_eqn}. Since $\E{1}{\KL{p_1}{\widehat{p}_i}} \leq C_1 n_b^{-\beta_1}$ (from \eqref{eq:delay_loo_mean_var_bound}), the second term in \eqref{eq:delay_loo_eps_eqn} is no larger than $C_1 n_b^{1-\beta_1}$.
In order to choose a proper $\eps_b$, there are three cases depending on the rate of the first term 
%(as compared to that of the second term, which is $O(n_b^{1-\beta_1})$) 
in \eqref{eq:delay_loo_eps_eqn}.
\begin{itemize}
\item Case 1: $4 \beta_1 > \beta_2$. Let
\begin{equation}
\label{eq:delay_loo_choose_eps}
    \eps_b = \frac{2 \sqrt{C_2} n_b^{1- \beta_2 / 2}}{\delta_b},
\end{equation}
with $\delta_b$ as chosen below.
% And $\delta_b$ will be chosen to ensure that
% \[ \sqrt{C_2} \delta_b^{-1} n_b^{1-\beta_2 / 2} = \omega(n_b^{1-\beta_1})\]
% so that \eqref{eq:delay_loo_eps_eqn} is satisfied for large $b$.
With this $\eps_b$, the first term in \eqref{eq:delay_loo_main2} becomes
\begin{align*}
    & \Prob{1}{\sum_{i=1}^{n_b} Z_i < b + \eps_b} \nonumber\\
    &= \Prob{1}{\sum_{i=1}^{n_b} Z_i < (1-\delta_b) n_b I + \frac{2 \sqrt{C_2} n_b^{1-\beta_2 / 2}}{\delta_b}}\\
    &= \Prob{1}{\sum_{i=1}^{n_b} Z_i < (1-\delta_b) n_b I \brc{1 + \frac{2 \sqrt{C_2} }{(1-\delta_b)\delta_b n_b^{\beta_2 / 2} I} } } \nonumber\\
    &\leq \Prob{1}{\sum_{i=1}^{n_b} Z_i < (1-\delta_b) n_b I \brc{1 + \frac{2 \eta \sqrt{C_2}}{\delta_b n_b^{\beta_2 / 2} I } } }
\end{align*}
where in the last inequality we have used the fact that $1-\delta_b > \eta^{-1}$. Let
\begin{equation}
\label{eq:delay_loo_choose_delta}
    \delta_b = \frac{(4 \eta^2 C_2)^\frac{1}{4}}{n_b^{\beta_2 / 4} \sqrt{I}} \iff \frac{2 \eta \sqrt{C_2}}{\delta_b n_b^{\beta_2 / 2} I } = \delta_b.
\end{equation}
With this chosen $\delta_b$,
\begin{equation*}
    \Prob{1}{\sum_{i=1}^{n_b} Z_i < b + \eps_b} \leq \Prob{1}{\sum_{i=1}^{n_b} Z_i < (1-\delta_b^2) n_b I}.
\end{equation*}
Assuming that \eqref{eq:lai_lower} is true for $Z_i$'s, we have \cite[Appendix~B]{lai1998infobd}
\begin{equation*} 
    \Prob{1}{\sum_{i=1}^{n_b} Z_i < (1-\delta_b^2) n_b I} \leq \delta_b^2,
\end{equation*}
and thus
\begin{equation} 
\label{eq:delay_loo_true_llr}
    \Prob{1}{\sum_{i=1}^{n_b} Z_i < b + \eps_b} \leq \delta_b^2.
\end{equation}

Now, we verify that \eqref{eq:delay_loo_eps_eqn} holds for all large enough $b$'s. With the chosen $\delta_b$ (in \eqref{eq:delay_loo_choose_delta}), the first term in \eqref{eq:delay_loo_eps_eqn} satisfies
\[ \sqrt{C_2} \delta_b^{-1} n_b^{1-\beta_2 / 2} = \Theta(n_b^{1- \beta_2/4}) = \omega(n_b^{1-\beta_1}). \]
Therefore, the chosen $\eps_b$ (in \eqref{eq:delay_loo_choose_eps}) satisfies \eqref{eq:delay_loo_eps_eqn} for all $b$'s large enough. As a result, from \eqref{eq:delay_loo_term2}, we get
\[ \Prob{1}{\frac{1}{n_b} \sum_{i=1}^{n_b} \left(Z_i - \widehat{Z}_i\right) \geq \frac{\eps_b}{n_b}} \leq \delta_b^2. \]

% which, by \eqref{eq:delay_loo_choose_eps}, also implies that $\eps_b = \Theta\brc{ n_b^{1 - \beta_2 / 4}}$.
% As $b \to \infty$, $n_b \to \infty$ as well. Since $0 < \beta_2 < 2$, $\delta_b \searrow 0$, $\eps_b \nearrow \infty$, and $\eps_b = o(n_b)$. Now, 

\item Case 2: $4 \beta_1 < \beta_2$. Let
\begin{align}
\label{eq:delay_loo_choose_eps_delta_2}
    &\eps_b = 2 C_1 n_b^{1-\beta_1}, \nonumber\\
    &\delta_b = \frac{2 \eta C_1}{I} n_b^{-\beta_1} \iff \frac{\eta \eps_b}{n_b I} = \delta_b.
\end{align}
With this choice,
\begin{align} \label{eq:delay_loo_first_term_bound_2}
    \Prob{1}{\sum_{i=1}^{n_b} Z_i < b + \eps_b} &= \Prob{1}{\sum_{i=1}^{n_b} Z_i < (1-\delta_b) n_b I + 2 C_1 n_b^{1-\beta_1}} \nonumber\\
    &= \Prob{1}{\sum_{i=1}^{n_b} Z_i < (1-\delta_b) n_b I \brc{1 + \frac{2 C_1 }{(1-\delta_b) n_b^{\beta_1} I} } } \nonumber\\
    &\leq \Prob{1}{\sum_{i=1}^{n_b} Z_i < (1-\delta_b) n_b I \brc{1 + \frac{2 \eta C_1}{n_b^{\beta_1} I } } } \nonumber\\
    &= \Prob{1}{\sum_{i=1}^{n_b} Z_i < (1-\delta_b^2) n_b I}
\end{align}
and thus, assuming \eqref{eq:lai_lower} holds, we have
\[ \Prob{1}{\sum_{i=1}^{n_b} Z_i < b + \eps_b} \leq \delta_b^2. \]
Also, since
\[ \sqrt{C_2} \delta_b^{-1} n_b^{1-\beta_2 / 2} = \Theta\brc{n_b^{1-\beta_2/2+\beta_1}} = o(n_b^{1-\beta_1}), \]
the chosen $\eps_b$ (in \eqref{eq:delay_loo_choose_eps_delta_2}) satisfies \eqref{eq:delay_loo_eps_eqn} for all $b$'s large enough. As a result, from \eqref{eq:delay_loo_term2}, we get
\[ \Prob{1}{\frac{1}{n_b} \sum_{i=1}^{n_b} \left(Z_i - \widehat{Z}_i\right) \geq \frac{\eps_b}{n_b}} \leq \delta_b^2. \]

% The rest of the proof is similar, and in the same way we get \eqref{eq:delay_loo_true_llr}.
% With this choice of $\eps_b$ and $\delta_b$, we can get \eqref{eq:delay_loo_true_llr} with a similar argument.
% We want to verify that the first term in \eqref{eq:delay_loo_eps_eqn} is dominated by $n_b^{1-\beta_1}$ with $\delta_b$ defined in \eqref{eq:delay_loo_choose_eps_delta_2}.
% For verification, $\sqrt{C_2} \delta_b^{-1} n_b^{1-\beta_2 / 2} = \Theta(n_b^{1 + \beta_1 - \beta_2/2}) = o(n_b^{1-\beta_1})$ if $4 \beta_1 < \beta_2$. As a result, \eqref{eq:delay_loo_eps_eqn} is satisfied for all large enough $b$.

\item Case 3: $4 \beta_1 = \beta_2$. Let $C_3$ be a large enough constant such that
\begin{equation} \label{eq:delay_loo_choose_c3_3}
    C_3 \geq \frac{I \sqrt{C_2}}{\eta C_3} + C_1.
\end{equation}
Choose
\begin{align}
\label{eq:delay_loo_choose_eps_delta_3}
    &\eps_b = C_3 n_b^{1-\beta_1}, \nonumber\\
    &\delta_b = \frac{\eta C_3}{I} n_b^{-\beta_1} \iff \frac{\eta \eps_b}{n_b I} = \delta_b.
\end{align}
Following the same line of argument as in \eqref{eq:delay_loo_first_term_bound_2}, we get
\[ \Prob{1}{\sum_{i=1}^{n_b} Z_i < b + \eps_b} = \Prob{1}{\sum_{i=1}^{n_b} Z_i < (1-\delta_b) n_b I + 2 C_1 n_b^{1-\beta_1}} \leq \delta_b^2. \]
Also, from \eqref{eq:delay_loo_choose_c3_3},
\[ \eps_b = C_3 n_b^{1-\beta_1} \geq \frac{I \sqrt{C_2}}{\eta C_3} n_b^{1-\beta_2/2+\beta_1} + C_1 n_b^{1-\beta_1} = \frac{\sqrt{C_2} n_b^{1-\beta_2 / 2}}{\delta_b} + C_1 n_b^{1-\beta_1}. \]
Therefore, \eqref{eq:delay_loo_eps_eqn} is satisfied for the chosen $\eps_b$ (in \eqref{eq:delay_loo_choose_eps_delta_3}), and from \eqref{eq:delay_loo_term2} we get
\[ \Prob{1}{\frac{1}{n_b} \sum_{i=1}^{n_b} \left(Z_i - \widehat{Z}_i\right) \geq \frac{\eps_b}{n_b}} \leq \delta_b^2. \]
\end{itemize}

To sum up, in all cases, we have shown the existence of $\eps_b$ and $\delta_b$ (that depend on $\beta_1$ and $\beta_2$) such that
\[ \Prob{1}{\sum_{i=1}^{n_b} Z_i < b + \eps_b} \leq \delta_b^2 \quad \text{and}\quad \Prob{1}{\frac{1}{n_b} \sum_{i=1}^{n_b} \left(Z_i - \widehat{Z}^{n_b,1}_i\right) \geq \frac{\eps_b}{n_b}} \leq \delta_b^2 \]
hold simultaneously. Continuing \eqref{eq:delay_loo_main2}, we can write, for any $(n,\nu)$ such that $n \geq \nu \geq 1$,
% \vvv{Refer the reader back to the equation that talks about what you are trying to prove here rather than afterwards}
\begin{align*}
    &\Prob{\nu}{\sum_{i=n}^{n+n_b-1} \widehat{Z}^{n+n_b-1,n}_i < b}\\
    &\leq \Prob{1}{\sum_{i=1}^{n_b} Z_i \leq b + \eps_b} + \Prob{1}{\frac{1}{n_b} \sum_{i=1}^{n_b} \left(Z_i - \widehat{Z}^{n_b,1}_i\right) \geq \frac{\eps_b}{n_b}}\\
    &\leq 2 \delta_b^2.
\end{align*}
This is exactly what was required to be shown in \eqref{eq:delay_loo_main}. The proof is now complete. \qedhere
\end{proof}

\begin{proof}[Proof of Lemma~\ref{lem:fa_wla}]
Define the SR-like statistic
\[ R_n = (1 + R_{n-1}) e^{\widehat{Z}^w_n},~\forall n > w \]
with $R_1 = \dots = R_w = 0$. Also define the corresponding test:
\[ \overline{\tau}_R(b) := \inf \left\{n > w: R_n \geq e^b \right\}. \]
Note that the NWLA-CuSum statistic in \eqref{wla:stat} can be written equivalently as
\[ e^{\overline{W}(n)} = \max\cbrc{1, e^{\overline{W}(n-1)}} e^{\widehat{Z}^w_n},\quad n > w. \]
Therefore, for $n > w$, $R_n > e^{\overline{W}(n)}$ and thus $\overline{\tau}(b) \geq \overline{\tau}_R(b)$ on $\{\overline{\tau}(b) < \infty\}$.

Now, without loss of generality assume $\E{\infty}{\overline{\tau}(b)} < \infty$; otherwise the statement of the lemma holds  trivially. This implies that $\E{\infty}{\overline{\tau}_R(b)} < \infty$. Observe that $R_n \in {\cal F}_n$ and
\[ \E{\infty}{R_n - n| {\cal F}_{n-1}} = (1+R_{n-1}) \E{\infty}{e^{\widehat{Z}^w_n} | {\cal F}_{n-1}} -n = R_{n-1} - (n-1),~\forall n > w. \]
The last equality follows because $\widehat{p}^w_n$ is a density given ${\cal F}_{n-1}$. Hence $\{R_n - n\}_{n > w}$ is a $(\mathbb{P}_\infty,{\cal F}_n)$-martingale.
Also, for any $n > w$, since $R_n \in (0,e^b)$ almost surely on the event $\{\overline{\tau}_R(b) > n\}$, we have, for any $n > w$,
\begin{align*}
    &\E{\infty}{\abs{(R_{n+1} - (n+1)) - (R_n - n)}\Big|{\cal F}_{n}}\\
    &= \E{\infty}{\abs{R_{n+1} - R_n - 1}\Big|{\cal F}_{n}}\\
    &\leq \E{\infty}{R_{n+1}|{\cal F}_{n}} + (R_n + 1)\\
    &= 2 (R_n+1)\\
    &\leq 2 (e^b + 1)
\end{align*}
almost surely on the event $\{\overline{\tau}_R(b) > n\}$. Therefore, we can apply the optional sampling theorem and obtain
\[ \E{\infty}{R_{\overline{\tau}_R(b)}-\overline{\tau}_R(b)} = \E{\infty}{R_{w+1}-(w+1)} = \E{\infty}{e^{\widehat{Z}^w_{w+1}}} - (w+1) = -w, \]
where $\E{\infty}{e^{\widehat{Z}^w_{w+1}}} = 1$ because $\widehat{p}^w_{w+1}$ is a density given ${\cal F}_{w}$. Finally, we arrive at
\[ \E{\infty}{\overline{\tau}(b)} \geq \E{\infty}{\overline{\tau}_R(b)} = w + \E{\infty}{R_{\overline{\tau}_R(b)}} \geq e^b. \qedhere \]
\end{proof}

\begin{proof}[Proof of Lemma~\ref{lem:wla_esssup}]
The proof is similar to \cite[Lemma~4]{xie2022windowlimited}. Define a helping stopping time
\[ \overline{\tau}_\nu (b) := \inf\{n \geq \nu+w: \overline{W}_\nu (n) \geq b \} \]
where
\[ \overline{W}_\nu (n) = \brc{\overline{W}_\nu (n-1)}^+ + \widehat{Z}^w_n,\quad n \geq \nu + w \]
with $\overline{W}_\nu (n) = 0,\forall n < \nu + w$. Note that $\overline{W}(\nu+w) \geq \overline{W}_\nu(\nu+w)$. Now, if $\overline{W}(k) \geq \overline{W}_\nu(k)$, we have
\[ \overline{W}(k+1) = \brc{\overline{W}(k)}^+ + \widehat{Z}^w_{k+1} \geq \brc{\overline{W}_\nu(k)}^+ + \widehat{Z}^w_{k+1} = \overline{W}_\nu(k+1) \]
as long as $\overline{W}(k) < b$. Thus, by induction,
\[ \overline{W}(n) \geq \overline{W}_\nu(n),~\forall n \geq \nu+w\]
on the event $\cbrc{\overline{W}(n) < b}$, which implies that  $\overline{\tau}(b) \leq \overline{\tau}_\nu(b)$ almost surely under $\mathbb{P}_\nu$. In the remainder of the proof, we omit ``$(b)$" in the descriptions of the stopping times for notational brevity.

Since $\overline{\tau} - \nu + 1 = w + \brc{\overline{\tau} - \nu - w + 1} \leq w + (\overline{\tau} - \nu - w + 1)^+$, we have
\[ (\overline{\tau} - \nu + 1)^+ \leq w + (\overline{\tau} - \nu - w + 1)^+ \quad \mathbb{P}_\nu\text{--}a.s. \]
Thus,
% \vvv{[Since $\overline{\tau}_\nu \geq \nu + w - 1$, we can drop the positive part from last line below and cancel the $w$ from inside and outside the expectation.]} \yuchen{I am not sure I understand this comment. There is non-zero probability under $\mathbb{P}_\nu$ such that $\overline{\tau} < \nu + w - 1$ (e.g., in a false alarm event).}
\begin{align*}
    &\E{\nu}{(\overline{\tau} - \nu + 1)^+ | {\cal F}_{\nu-1} }\\
    &\leq w + \E{\nu}{(\overline{\tau} - \nu - w + 1)^+ | {\cal F}_{\nu-1} }\\
    % &= w + \E{\nu}{(\overline{\tau} - \nu - w + 1) \ind{\overline{\tau} - \nu - w + 1 \geq 0} | {\cal F}_{\nu-1} }\\
    &= w + \E{\nu}{\E{\nu}{(\overline{\tau} - \nu - w + 1)^+ | X_{\nu},\dots,X_{\nu+w-1}, {\cal F}_{\nu-1} }| {\cal F}_{\nu-1} }\\
    &\stackrel{(*)}{\leq} w + \E{\nu}{\E{\nu}{(\overline{\tau}_\nu - \nu - w + 1)^+ | X_{\nu},\dots,X_{\nu+w-1}, {\cal F}_{\nu-1} }| {\cal F}_{\nu-1} }\\
    &\stackrel{(**)}{=} w + \E{\nu}{\E{\nu}{\overline{\tau}_\nu - \nu - w + 1 | X_{\nu},\dots,X_{\nu+w-1}, {\cal F}_{\nu-1} }| {\cal F}_{\nu-1} }
\end{align*}
where $(*)$ holds because $\overline{\tau} \leq \overline{\tau}_\nu$ almost surely (under $\mathbb{P}_\nu$), and $(**)$ holds because $\overline{\tau}_\nu \geq \nu + w - 1 \geq 0$ almost surely (under $\mathbb{P}_\nu$).

Now, $\forall \nu \geq 1$, given the information of $X_{\nu},\dots,X_{\nu+w-1}$, the event $\cbrc{\overline{\tau}_\nu \geq \nu + w}$ is independent of ${\cal F}_{\nu-1}$. Thus,
\begin{align*}
    & \esssup \E{\nu}{\E{\nu}{\overline{\tau}_\nu - \nu - w + 1 | X_{\nu},\dots,X_{\nu+w-1}, {\cal F}_{\nu-1} } | {\cal F}_{\nu-1} } \\
    &= \E{\nu}{\E{\nu}{\overline{\tau}_\nu - \nu - w + 1 | X_{\nu},\dots,X_{\nu+w-1} } }\\
    &= \E{1}{\E{1}{\overline{\tau}_1 - w | X_{1},\dots,X_{w} }}\\
    % &= \E{1}{(\overline{\tau} - w)^+ }\\
    &= \E{1}{\overline{\tau} - w}
\end{align*}
% where the last line holds because $\overline{\tau} \geq w$ almost surely under $\mathbb{P}_1$.
The last line holds because $\overline{\tau}_1 = \overline{\tau}$ almost surely (under $\mathbb{P}_1$). The proof is now complete.
\end{proof}

\begin{proof}[Proof of Lemma~\ref{lem:delay_wla}]
The proof consists of two parts. In the first part, we use a similar technique as in \cite[Thm~1.1]{janson1983renewaldependent} to obtain an extension of Wald's identity to the case where the samples are $w$-dependent. In the second part, we upper bound the overshoot using results from renewal theory. For notational brevity, we omit the dependence on $w$ and write $\widehat{Z}^w_i = \widehat{Z}_i$.

Define
\[ Y_i := \E{1}{U_{i} - (i-w) \widehat{I}|{\cal F}_{i-w}},~\forall i \geq w\]
and note that $Y_w=0$. Now,
\begin{align*}
    \E{1}{Y_{i+1} | {\cal F}_{i-w}} &= \E{1}{\E{1}{U_{i+1} - (i+1-w) \widehat{I}|{\cal F}_{i+1-w}} | {\cal F}_{i-w}}\\
    &= \E{1}{U_{i} + \widehat{Z}_{i+1} - (i-w) \widehat{I} - \widehat{I} | {\cal F}_{i-w}}\\
    &= Y_i + \E{1}{\widehat{Z}_{i+1} - \widehat{I} | {\cal F}_{i-w}}\\
    &\stackrel{(*)}{=} Y_i + \E{1}{\widehat{Z}_{i+1} - \widehat{I}}\\
    &= Y_i,~\forall i > w,~\mathbb{P}_1\text{--}a.s.
\end{align*}
where $(*)$ follows from independence between $\widehat{Z}^w_{i+1}$ and ${\cal F}_{i-w}$. This implies that $\cbrc{(Y_i, {\cal F}_{i-w})}_{i \geq w}$ is a martingale. Therefore, for any finite $k > w$, $\min\cbrc{\tau_u,k} \leq k < \infty$, and thus
\begin{align*}
    &\E{1}{U_{\min\cbrc{\tau_u,k}} - \widehat{I} (\min\cbrc{\tau_u,k}-w)} \\
    &= \sum_{m=1}^\infty \Prob{1}{\tau_u = m} \E{1}{\E{1}{U_{\min\cbrc{m,k}} - \widehat{I} (\min\cbrc{m,k}-w)|{\cal F}_{\min\cbrc{m,k}-w}} \big| \tau_u = m}\\
    &= \sum_{m=1}^\infty \Prob{1}{\tau_u = m} \E{1}{Y_{\min\cbrc{m,k}} | \tau_u = m}\\
    &= \E{1}{Y_{\min\cbrc{\tau_u,k}}} \stackrel{(*)}{=} \E{1}{Y_w} = 0
\end{align*}
where $(*)$ follows from optional sampling theorem. This implies that
\begin{equation} \label{eq:wla_delay_p1_middle_step}
    \E{1}{U_{\min\cbrc{\tau_u,k}}} = \widehat{I} (\E{1}{\min\cbrc{\tau_u,k}}-w).
\end{equation}
Note that $\tau_u < \infty$ with probability 1 under $\mathbb P_1$ by Lemma~\ref{lem:wla_stop}.
For $i > w$, let $\widehat{Z}_i^+ := \max\{0,\widehat{Z}_i\}$ and $\widehat{Z}_i^- := -\min\{0,\widehat{Z}_i\}$. Note that $\widehat{Z}_i^+,\widehat{Z}_i^- \geq 0$, $\widehat{Z}_i = \widehat{Z}_i^+ - \widehat{Z}_i^-$, and $U_n = \sum_{i=w+1}^n \brc{\widehat{Z}_i^+ - \widehat{Z}_i^-},\forall n > w$. Thus, we have
\begin{align*}
    \lim_{k \to \infty} \E{1}{U_{\min\cbrc{\tau_u,k}}} &= \lim_{k \to \infty} \E{1}{\sum_{i=w+1}^{\min\cbrc{\tau_u,k}} \widehat{Z}_i^+} - \lim_{k \to \infty} \E{1}{\sum_{i=w+1}^{\min\cbrc{\tau_u,k}} \widehat{Z}_i^-}\\
    &\stackrel{(i)}{=} \E{1}{\lim_{k \to \infty} \sum_{i=w+1}^{\min\cbrc{\tau_u,k}} \widehat{Z}_i^+} - \E{1}{\lim_{k \to \infty} \sum_{i=w+1}^{\min\cbrc{\tau_u,k}} \widehat{Z}_i^-}\\
    &\stackrel{(ii)}{=} \E{1}{\sum_{i=w+1}^{\tau_u} \widehat{Z}_i^+} - \E{1}{\sum_{i=w+1}^{\tau_u} \widehat{Z}_i^-}\\
    &= \E{1}{U_{\tau_u}}
\end{align*}
where $(i)$ follows from the monotone convergence theorem, and $(ii)$ is due to the fact that $\tau_u < \infty$ with probability 1. Also by the monotone convergence theorem,
\[ \lim_{k \to \infty} \E{1}{\min\cbrc{\tau_u,k}} = \E{1}{ \lim_{k \to \infty} \min\cbrc{\tau_u,k}} = \E{1}{\tau_u}. \]
Thus, taking the limit of $k$ on both sides of \eqref{eq:wla_delay_p1_middle_step},
\begin{equation}
\label{eq:extended_wald}
    \E{1}{U_{\tau_u}} = \lim_{k \to \infty} \E{1}{U_{\min\cbrc{\tau_u,k}}} = \lim_{k \to \infty} \widehat{I} (\E{1}{\min\cbrc{\tau_u,k}}-w) = \widehat{I} (\E{1}{\tau_u}-w).
\end{equation}

Now, denote
\[ L_i := \log \frac{\widehat{p}_i^w(X_i)}{p_1(X_i)},\quad \forall i > w. \]
By definition we have $\E{1}{\widehat{Z}_i - L_i} = I,\forall i > w$. The proof of \eqref{eq:extended_wald} is also applicable to $L^2_{\tau_u}$, which gives us
\begin{equation}
\label{eq:extended_wald_l2}
    \E{1}{\sum_{i=w+1}^{\tau_u} L^2_{i}} = \E{1}{L^2_{w+1}} (\E{1}{\tau_u}-w).
\end{equation}
Thus,
\begin{align}
\label{eq:delay_overshoot_bound}
    \E{1}{U_{\tau_u}-b} &= \E{1}{U_{\tau_u-1}-b+\widehat{Z}_{\tau_u}} < \E{1}{\widehat{Z}_{\tau_u}} = I + \E{1}{L_{\tau_u}} \nonumber\\
    &\stackrel{(i)}{\leq} I + \sqrt{\E{1}{L_{\tau_u}^2}} \leq I + \sqrt{\E{1}{\sum_{i=w+1}^{\tau_u} L_i^2}} \nonumber\\
    &\stackrel{(ii)}{=} I + \sqrt{\brc{\E{1}{\tau_u}-w} \E{1}{L_{w+1}^2}} \nonumber\\
    &\stackrel{(iii)}{\leq} I + \sqrt{\frac{C_2}{w^{\beta_2}} \brc{\E{1}{\tau_u}-w}}
\end{align}
for sufficiently large $w$. Here $(i)$ follows from Jensen's inequality, $(ii)$ follows from \eqref{eq:extended_wald_l2}, and $(iii)$ follows from assumption \eqref{eq:converg_m2_bound}. Denote $c_w := C_2 w^{-\beta_2}$ and $x := \E{1}{\tau_u}-w$. The goal below is to get an upper bound for $x$. Combining \eqref{eq:delay_overshoot_bound} with \eqref{eq:extended_wald}, we obtain
\[
     \sqrt{c_w x} + I \geq \widehat{I} x - b  
\]
which implies that
\begin{align*}
     x & \leq \frac{(2 (b+I) \widehat{I} + c_w) + \sqrt{(2 (b+I) \widehat{I} + c_w)^2 - 4 \widehat{I}^2 (b+I)^2}}{2 \widehat{I}^{2}}\\
     & \leq \frac{2 (b+I) \widehat{I} + c_w}{\widehat{I}^{2}}.
\end{align*}
Plugging this bound into \eqref{eq:delay_overshoot_bound} gives us
\begin{align*} \E{1}{U_{\tau_u}-b} & \leq I + \sqrt{\frac{2 (b+I) c_w \widehat{I} + c_w^2}{\widehat{I}^{2}} } \\
& \leq I + \frac{2 \sqrt{(b+I) c_w \widehat{I}} + \sqrt{2} c_w }{\widehat{I}} 
\end{align*}
where in the last inequality we use the fact that $\sqrt{u+v} \leq \sqrt{2 u} + \sqrt{2 v}$ for any $u,v > 0$. Therefore, combining with \eqref{eq:extended_wald}, we obtain
\begin{align*}
    \E{1}{\tau_u} &= w + \widehat{I}^{-1} \brc{b + \E{1}{U_{\tau_u}-b}}\\
    &\leq w + \widehat{I}^{-1} (b+I) + \brc{ 4 C_2 (b+I) w^{-\beta_2} \widehat{I}^{-3} }^{\frac{1}{2}} + \sqrt{2} \widehat{I}^{-2} C_2 w^{-\beta_2}.
\end{align*}
The proof is now complete since $\E{1}{\overline{\tau}(b)} \leq \E{1}{\tau_u(b)}$ by Lemma~\ref{lem:dominance}. \qedhere

\end{proof}

\end{document}